\documentclass[journal]{IEEEtran}

\usepackage[ruled]{algorithm2e}
\usepackage{algorithmic}

\usepackage{enumitem}
\usepackage[hidelinks]{hyperref}
\usepackage{graphicx}
\graphicspath{{figures/}}
\usepackage{booktabs}
\usepackage{fontawesome} 
\usepackage{multirow}
\usepackage{subcaption}

\usepackage{amsmath}
\usepackage{cases}
\usepackage{amssymb}
\usepackage{amsfonts}
\usepackage{mathtools}

\usepackage{amsthm}
\theoremstyle{definition}
\newtheorem{lemma}{Lemma}
\newtheorem{theorem}{Theorem}
\newtheorem{proposition}{Proposition}

\newtheorem{corollary}{Corollary}
\theoremstyle{definition}
\newtheorem{definition}{Definition}
\newtheorem{assumption}{Assumption}
\newtheorem{problem}{Problem}
\newtheorem*{remark}{Remark}
\newtheorem{example}{Example}

\usepackage{pifont}
\usepackage{makecell}
\newcommand{\symbox}[1]{\makebox[0.8em][c]{#1}}
\newcommand{\cmark}{\symbox{\ding{51}}}   
\newcommand{\xmark}{\symbox{\ding{55}}}   

\let\oldremark\remark
\let\endoldremark\endremark
\renewenvironment{remark}
  {\oldremark}
  {\hfill$\Diamond$\endoldremark}

\newcommand\norm[2][1]{\left\|#2\right\|_{#1}}
\newcommand\abs[1]{\left|#1\right|}

\newcommand\difffrac[3][1]{
	\ifnum #1=1
		\frac{\mathrm{d} #2}{\mathrm{d} #3}
	\else
	\frac{{\mathrm{d}}^{#1} #2}{\mathrm{d} #3^{#1}}
	\fi
}

\newcommand\R{{\mathbb{R}}}
\newcommand\N{{\mathbb{N}}}

\newcommand{\Setminus}[2]{{\left.#1\middle\backslash #2\right.}}

\newcommand\st{{\text{s.t.}}}

\renewcommand\vector[1]{\boldsymbol{#1}}

\newcommand\vb{{\vector{b}}}

\newcommand\vd{{\vector{d}}}

\newcommand\vf{{\vector{f}}}
\newcommand\vg{{\vector{g}}}
\newcommand\vh{{\vector{h}}}

\newcommand\vq{{\vector{q}}}
\newcommand\vr{{\vector{r}}}

\newcommand\vx{{\vector{x}}}
\newcommand\vy{{\vector{y}}}

\newcommand\vA{{\vector{A}}}

\newcommand\vtau{{\vector{\tau}}}

\newcommand\vzero{{\vector{0}}}

\newcommand\mC{{\mathbf{C}}}

\newcommand\mM{{\mathbf{M}}}
\newcommand\mQ{{\mathbf{Q}}}
\newcommand\bx{{\mathbf{x}}}
\newcommand\bu{{\mathbf{u}}}

\newcommand\f{{\text{f}}}

\newcommand\dom{{\text{dom\,}}}

\DeclareMathOperator*{\argmax}{argmax}
\DeclareMathOperator*{\argmin}{argmin}
\DeclareMathOperator*{\minmax}{min/max}

\usepackage{accents}
\DeclareRobustCommand{\underbar}[1]{\kern0.3\fontdimen2\font\underaccent{\bar}{\kern-0.3\fontdimen2\font #1}}

\usepackage{xcolor}

\newcommand\RomanNum[1]{\uppercase\expandafter{\romannumeral #1}} 

\begin{document}

\title{Reachability-Augmented Dual Dynamic Programming for Optimal Path Parameterization}

\author{
    Yunan~Wang,
    Jizhou~Yan,
    Chuxiong~Hu,
    and~Zeyang~Li
\thanks{Corresponding author: Chuxiong Hu (e-mail: cxhu@tsinghua.edu.cn).}
}

\maketitle

\begin{abstract}
    Optimal path parameterization (OPP) is a fundamental problem for planning trajectories along a prescribed geometric path under kinodynamic constraints and task-dependent objectives. While time-optimal path parameterization (TOPP) minimizes traversal time, its saturating states and controls may induce vibration and tracking errors, which can be mitigated by introducing smoothness objectives. However, a key capability gap remains in OPP: feasibility guarantees, general-objective optimality certificates, and computational efficiency are difficult to achieve simultaneously in a unified framework, especially for third-order OPP (OPP3) with non-convex constraints. This paper proposes reachability-augmented dual dynamic programming (RDDP), a state-grid-free and objective-aware dynamic programming (DP) framework for OPP. The key idea is to replace the relatively complete recourse assumption used in classical dual DP (DDP) with OPP-specific backward reachable sets, and then generate both value-function cuts and trial trajectories only inside these reachable sets. For convex and non-convex OPP, we prove global optimality and Karush-Kuhn-Tucker convergence of RDDP under OPP-specific conditions, respectively. Efficient instantiations are developed for second-order OPP (OPP2) and OPP3. Experiments show that RDDP achieves objective values comparable to convex-optimization baselines while reducing computation time by 28.6 times for OPP2 and 5.8 times for OPP3. RDDP also achieves faster convergence than grid-based DP. Compared with reachability-analysis methods, RDDP retains the reachability mechanism while replacing local maximum-control propagation with value-function-guided control selection, thereby enabling objectives beyond traversal time. In summary, RDDP addresses a key capability gap in OPP by unifying certifiable general-objective optimization, reachability-based feasibility preservation, and online-compatible low-dimensional DP computation in a single OPP framework.
\end{abstract}

\begin{IEEEkeywords}
    Optimal control, path parameterization, dual dynamic programming, reachability analysis, general objective.
\end{IEEEkeywords}

\IEEEpeerreviewmaketitle

\section{Introduction}\label{sec:Introduction}

\IEEEPARstart{O}{ptimal} path parameterization (OPP) is a fundamental problem in robotics and optimal control. It aims to find an optimal time parameterization of a given geometric path under kinodynamic constraints and task-dependent objectives. Originating from the study in the 1980s \cite{bobrow1985time,shin1985minimum,shin1986dynamic}, OPP has been widely applied in various robotic systems, such as manipulators \cite{verscheure2009time,debrouwere2013time,wang2022third}, humanoid robots \cite{pham2018new,hauser2014fast}, and manufacturing systems \cite{wang2026online,xu2025real}.

When the objective is to minimize the traversal time, OPP reduces to the well-known time-optimal path parameterization (TOPP) problem \cite{pham2018new}. According to the Pontryagin's maximum principle (PMP) \cite{pontryagin1987mathematical}, the optimal control of TOPP is bang-singular-bang, which pushes the velocity, acceleration, torque, and jerk constraints to their limits. Despite the high motion efficiency, such constraint-saturating trajectories might induce undesirable behaviors such as vibration and tracking error in some scenarios. By trading traversal time with task-dependent criteria such as energy consumption and smoothness \cite{verscheure2009time,kaserer2019nearly}, general objectives can smooth the motion profile while maintaining high motion efficiency. Our real-world experiment provides one example of this tradeoff: compared with pure TOPP, the time-thermal objective increases the traversal time by only 0.15\% while reducing the normalized total variation of feedback acceleration by 87.7\%.

For online robotic applications, an ideal OPP solver should satisfy four requirements simultaneously:
\begin{enumerate}
    \item \textbf{Feasibility}: The solver should guarantee the feasibility of the generated trajectory under state and mixed state-control constraints, which are essential for the safety and reliability of robotic systems.
    \item \textbf{General-objective optimality}: The solver should provide appropriate optimality certificates for general objectives.
    \item \textbf{Efficiency}: The solver should be computationally efficient enough for online motion generation.
    \item \textbf{High-order capability}: The solver should be able to handle higher-order systems, especially third-order OPP (OPP3) which involves non-convex jerk constraints.
\end{enumerate}
To the best of the authors' knowledge, it remains challenging to simultaneously achieve all the above properties within a unified framework. This key capability gap is particularly pronounced for OPP3 due to non-convexity.

\begin{table*}[!t]
    \centering
    \caption{Comparison of different classes of OPP solvers.}
    \label{tab:related_works}
    \begin{tabular}{llllll}
        \toprule
        Method & Order & Objective Scope & Optimality & Feasibility & Efficiency \\ \midrule
        NI \cite{pham2014general} & 2 & \xmark\,(Time only) & \cmark\,(Global) & \cmark & High \\
        CO \cite{verscheure2009time,hauser2014fast} & 2 & \cmark\,(General) & \cmark\,(Global) & \cmark & Low \\
        DP \cite{shin1986dynamic} & 2 & \cmark\,(General) & \cmark\,(Grid-optimal) & \cmark & Medium \\
        RA \cite{pham2018new} & 2 & \xmark\,(Time only) & \cmark\,(Global, without zero-inertia points) & \cmark & High \\
        \textbf{RDDP (Ours)} & 2 & \cmark\,(General) & \cmark\,(Global) & \cmark & Medium--High \\ \midrule
        NI \cite{pham2017structure} & 3 & \xmark\,(Time only) & \xmark & \xmark & N/A \\
        CO \cite{debrouwere2013time,wang2026online} & 3 & \cmark\,(General) & \cmark\,(KKT convergence) & \cmark & Low \\
        DP \cite{shin1986dynamic} & 3 & \cmark\,(General) & \cmark\,(Grid-optimal) & \cmark & Extremely Low \\
        Greedy DP \cite{kaserer2019nearly} & 3 & \cmark\,(General) & \xmark & \xmark & Medium \\
        RA \cite{dio2025time} & 3 & \xmark\,(Time only) & \cmark\,(Empirically near optimal) & \cmark & High \\
        \textbf{RDDP (Ours)} & 3 & \cmark\,(General) & \cmark\,(KKT convergence) & \cmark & Medium \\ \bottomrule
    \end{tabular}
\end{table*}

\subsection{Related Works}\label{subsec:related_works}

\subsubsection{OPP solvers}\label{sub2sec:review_opp_solvers}

The study of OPP, including TOPP, can be categorized into three main classes: numerical integration (NI), convex optimization (CO), and dynamic programming (DP). In this paper, we classify the recently developed reachability analysis (RA) approaches as DP methods, as they involve the backward propagation of reachable sets in a DP manner.

As the earliest approaches to TOPP \cite{shin1985minimum}, NI methods exploit the bang-singular-bang structure of the optimal control induced by PMP and reduce the decision space to a binary choice between maximum- and minimum-control arcs. As a result, NI methods are highly efficient for TOPP2, where time-optimality is guaranteed by the consistent maximum property of the optimal solution. Despite numerical instability issues induced by the dynamic singularity \cite{shiller1992computation}, some robust TOPP2-NI implementations have been developed \cite{pham2014general}. However, a complete extension of NI methods to TOPP3 is still lacking \cite{pham2017structure}. NI methods also do not naturally extend to general-objective OPP, as the bang-singular-bang structure no longer holds. This reveals the \textit{objective-agnostic} nature of NI methods.

CO approaches provide the strongest generic optimality certificates among existing OPP solvers. The standard optimization formulations for second-order OPP (OPP2) and OPP3 were established in \cite{verscheure2009time} and \cite{debrouwere2013time}, respectively. These formulations are highly expressive \cite{hauser2014fast}. For convex OPP, CO methods can guarantee global optimality; for non-convex OPP, CO methods can obtain Karush-Kuhn-Tucker (KKT) solutions based on the sequential convex programming (SCP) framework \cite{le2018dc}. For online applications with long paths, recent advances have further guaranteed the feasibility of the windowing process for non-convex OPP3 \cite{wang2026online}. As a result, CO approaches satisfy three of the requirements stated above, i.e., optimality, feasibility, and high-order capability. However, the main limitation of CO methods lies in their computational efficiency, especially for OPP3 which involves non-convex third-order constraints. Although OPP is often used as a low-level motion planning module that must be repeatedly solved with short latency, the considered optimization problems can involve thousands of decision variables and constraints. Such large-scale non-convex optimization problems remain computationally expensive for online OPP applications.

The above observation motivates the study of DP methods. Instead of solving the full large-scale optimization problem in OPP, DP methods decompose it into a sequence of low-dimensional subproblems. Standard DP methods discretize the state space into grid points at each parameter point \cite{shin1986dynamic}. Based on Bellman's principle of optimality (BPO) \cite{bellman1952theory}, DP methods generate the optimal grid sequence under general objectives based on the value function. Theoretically, grid-based DP can approach the global optimum in the continuous state space as the grid resolution tends to infinity; however, the grid refinement suffers from slow optimality convergence and high computational cost. Furthermore, the state space in OPP3 is two-dimensional, thereby exhibiting curse of dimensionality. To address this issue, some works improve efficiency by greedily pruning grid states and reducing the two-dimensional grid into a one-dimensional subset \cite{kaserer2019nearly}, which sacrifices feasibility and optimality guarantees. In summary, grid-based DP methods can handle general objectives, but they cannot guarantee optimality for OPP3 within acceptable computational time due to the curse of dimensionality.

In contrast to grid-based DP methods, RA methods propagate backward reachable sets directly in the continuous state space, thereby providing a state-grid-free mechanism for feasibility preservation. For convex TOPP2, the backward reachable sets can be exactly computed and represented as intervals \cite{pham2018new,consolini2019optimal}. For non-convex TOPP3, existing RA works typically compute conservative inner approximations of the backward reachable sets using two-dimensional polytopes \cite{dio2025time}. Once backward reachable sets are computed, RA methods perform forward propagation by locally maximizing the admissible control, which is a key reason for the high efficiency of RA. Benefiting from the consistent maximum property of TOPP2, RA methods can guarantee near-optimality for TOPP2, as a larger feasible path velocity is aligned with the minimum-time objective. Although such monotonicity does not hold in TOPP3, near optimality can be empirically achieved. However, the limitation appears in general OPP when the objective is no longer purely traversal time. Therefore, the open challenge is not feasibility preservation, where RA is already effective, but how to retain RA's reachability mechanism while achieving \textit{objective-aware} control selection for general OPP.

Table \ref{tab:related_works} summarizes the above review and illustrates the capability gap addressed in this paper. The computational efficiency is compared qualitatively within each order. More quantitative details of different methods are provided in Section \ref{subsec:numerical_experiments}.

\subsubsection{Dual Dynamic Programming (DDP)}\label{sub2sec:review_dpp}

DDP \cite{read1990dual}, also known in the stochastic setting as stochastic DDP (SDDP) \cite{pereira1991multi}, is a state-grid-free DP method for multistage convex optimization. In contrast to conventional grid-based DP methods, DDP approximates the value function from below using affine optimality cuts over the continuous state space. Then, the optimality cuts are repeatedly refined around the trial trajectory generated by the current approximate value function, which partially alleviates the curse of dimensionality.

From an optimization perspective, DDP can be interpreted as nested Benders decomposition \cite{bnnobrs1962partitioning,birge1985decomposition}, which approximate convex value functions by affine cuts. Instead of solving the large-scale optimization problem as a whole, DDP decomposes it into a sequence of subproblems while preserving convergence to the globally optimal value \cite{fullner2025stochastic} under standard assumptions.

A central feasibility issue of standard DDP is the relatively complete recourse assumption \cite{rockafellar1976stochastic}, under which every feasible current state admits a feasible continuation in future stages. Once such an assumption fails, feasibility cuts are introduced to exclude states that cannot be extended feasibly to the terminal set \cite{guigues2014sddp}, where an outer approximation of recourse-feasible region is iteratively refined. This mechanism is effective for general multistage optimization problems if the recourse-feasible region is not explicitly available.

To the best of the authors' knowledge, however, DDP has not been introduced to robotic OPP so far. This absence reflects a structural challenge rather than merely a lack of attention, since directly applying the classical DDP to OPP is nontrivial. As will be analyzed in Section \ref{subsec:key_ideas}, the difficulty rises from the hard and typically degenerate terminal set, together with the non-convexity of OPP. These observations motivate an OPP-specific DDP framework in this paper.

\subsection{Challenges and Key Ideas}\label{subsec:key_ideas}

As discussed in Section \ref{sub2sec:review_opp_solvers}, existing RA approaches in TOPP \cite{pham2018new,dio2025time} provide an efficient state-grid-free mechanism for feasibility and reachability. The missing component for general-objective OPP is a cost-to-go model. This motivates a combination of the value function with the RA framework, thereby preserving the feasibility guarantees of conventional RA while additionally enabling objective-aware control selection. Once the exact value function is available for every state in the backward reachable set, forward propagation can select the optimal admissible control at each stage, which results in the global optimality by BPO \cite{bellman1952theory}. However, the exact value function is computationally intractable in practice, as the backward reachable set contains uncountably many states.

To address the above issue, a state-grid-free DP framework should be introduced to mitigate the curse of dimensionality. As reviewed in Section \ref{sub2sec:review_dpp}, DDP is a potential candidate for this purpose, as it converges to the global optimum for convex multistage problems under suitable assumptions. However, the direct application of classical DDP to OPP is nontrivial due to feasibility and non-convexity issues.

The first difficulty lies in recourse-feasibility which is closely related to reachability in OPP. The terminal set is typically a singleton in OPP, i.e., the terminal state is fully specified. In principle, the lack of recourse in DDP can be represented by an extended-real value function, which assigns $+\infty$ to states outside the backward reachable set. However, the extended-real value function is infinite almost everywhere near the terminal stage due to the low-dimensionality of the backward reachable set, which leads to numerical failures in guiding the forward trajectory generation. As an alternative, feasibility cuts can be introduced to avoid the reliance on the relatively complete recourse assumption in OPP, where the approximation of backward reachable sets should be iteratively refined. Such approach is computationally expensive and may not provide feasible trajectories in early iterations, thus being undesirable for online applications. Furthermore, slack variables and penalty reformulations are also unsuitable for OPP due to the hard stage and terminal constraints.

Another difficulty rises from the non-convexity of OPP, as the standard DDP is primarily designed for convex problems, where affine optimality cuts are valid lower bounds of the convex value functions. This paper embeds DDP into the SCP framework to handle non-convex OPP. However, as will be analyzed in Section \ref{subsec:KKT_GOPP_Conservative}, the global optimality of each convexified subproblem might be unavailable in an RA-based DDP framework. In this case, the optimality convergence of SCP therefore requires a careful analysis.

In summary, RA and DDP address complementary components of the capability gap in general-objective OPP. RA provides explicit reachable sets for feasibility preservation, but it is specialized to time objectives. DDP provides objective-aware optimality guarantees for convex problems, but its classical formulation does not directly handle the reachability-critical and non-convex structure of OPP. These observations motivate the development of a reachability-augmented DDP framework in this paper, which combines the explicit feasibility preservation of RA with the objective-aware optimality of DDP. The resulting framework unifies reachability-based feasibility preservation, certifiable general-objective optimization, and online-compatible low-dimensional DP computation for both OPP2 and OPP3.

\subsection{Contributions and Comparison with Existing Methods}

The comparison of the proposed framework with existing methods is summarized in Table \ref{tab:related_works}. The main contributions and comparison with existing works are as follows:
\begin{enumerate}
    \item \textbf{Novel OPP framework augmenting DDP with RA.} To the best of the authors' knowledge, this paper introduces a DDP formulation to robotic OPP for the first time. To avoid relying on recourse assumptions or feasibility cuts in DDP that are unsuitable for OPP, this paper develops an OPP-specific reachability-augmented DDP (RDDP) framework. The key difference from a direct application of DDP is that RDDP explicitly uses backward reachable sets as recourse-feasible domains, inside which value-function cuts and trial trajectories are generated. Compared with RA, RDDP preserves reachability but replaces local maximum-control propagation with value-function-guided control selection, thereby enabling general objectives beyond traversal time. Compared with grid-based DP, the state-grid-free RDDP does not require the generated trajectory to pass through state-grid points. Compared with CO, RDDP avoids solving the full large-scale optimization problem by decomposing OPP into a sequence of low-dimensional subproblems. Therefore, RDDP unifies certifiable general-objective optimization, reachability-based feasibility preservation, and online-compatible low-dimensional DP computation in a single state-grid-free framework for OPP.
    \item \textbf{Feasibility and optimality guarantees.} Since DDP replaces the standard recourse assumption of DDP with explicit reachability information, this paper establishes theoretical feasibility and optimality guarantees for RDDP under new OPP-specific conditions. For convex OPP (COPP), the historical best solution is proved to converge to the global optimum of COPP. Benefiting from the introduced reachable set, each forward iteration of RDDP generates a feasible trajectory with computable DDP optimality gap. This property is particularly important for online OPP, as it enables practical early termination under limited computational budgets. Note that feasibility guarantee is generally not provided in a direct DDP implementation before sufficiently many feasibility cuts have been generated. For general non-convex OPP (GOPP), RDDP is embedded into an SCP framework with feasibility guarantee and optimality convergence to KKT solutions. The analysis is further extended to conservative approximate reachable sets, which are practically important for OPP3. Therefore, RDDP achieves comparable optimality level to CO methods, i.e., global optimality and KKT convergence in convex and non-convex OPP, respectively.
    \item \textbf{Efficient instantiations for OPP2 and OPP3.} This paper develops concrete RDDP algorithms for the widely studied OPP2 and OPP3. For OPP2, exact interval reachable sets and value-function cuts are computed through one- and two-dimensional convex optimization, respectively. For OPP3, conservative polytopic reachable sets are combined with projection-based and searching-based routines to control computational complexity, where convex optimization problems of dimension no greater than three are solved. In numerical experiments, RDDP achieves objective values comparable to CO baselines while achieving a speedup of approximately 28.6 and 5.8 times for OPP2 and OPP3, respectively. RDDP also converges faster than grid-based DP. Compared with RA, RDDP handles broader objectives for OPP. The developed RDDP can serve as a state-grid-free and objective-aware solver for general-objective OPP2 and OPP3 with feasibility and optimality for online robotic applications.
\end{enumerate}

\section{Problem Formulation and Preliminaries}\label{sec:formulation_preliminaries}

This section formulates the considered OPP problem and provides the theoretical preliminaries. Section \ref{subsec:formulation_opp} formulates OPP as an optimization problem. Then, the concepts of COPP and GOPP are formally stated in Section \ref{subsec:problem_statement}. Finally, two structural components of the proposed RDDP framework, i.e., the value function and the backward reachable set, are introduced in Section \ref{subsec:preliminary_dp}.

\subsection{Formulation of OPP}\label{subsec:formulation_opp}

Consider an $n$-dimensional robotic system with configuration $\vq\in\R^n$. A regular geometric path $\vq=\vq(s)$ of $\mathcal{C}^m$ smoothness is given, where $s\in[0,s_\text{f}]$ is the path parameter and the terminal parameter $s_\text{f}$ is fixed. Note that $s$ is not necessarily the arc length. The OPP problem is to find a strictly increasing time parameterization $s=s(t)$ that minimizes a user-specified objective under state and control constraints, where $t\in[0,t_\text{f}]$ is the time and the terminal time $t_\text{f}$ is free.

Following the standard process in \cite{verscheure2009time,debrouwere2013time}, the above description in the $t$-domain can be transformed into the $s$-domain, which is more convenient for numerical solution. The details of the transformation are provided in Appendix \ref{app:applicability}. As a result, the parameter domain $[0,s_\text{f}]$ is discretized into a grid $0=s_0<s_1<\cdots<s_N=s_\text{f}$. Denote the state at $s_k$ by $\vx(s_k)=\vx_k\in\R^{m-1}$ and the control in $(s_k,s_{k+1})$ by $u(s)\equiv u_k$. Specifically, $(\vx,u)$ is the discretization of $\Big(\frac12\dot{s}^2,\frac{\mathrm{d}}{\mathrm{d}s}\Big(\frac12\dot{s}^2\Big),\dots,\frac{\mathrm{d}^{m-1}}{\mathrm{d}s^{m-1}}\Big(\frac12\dot{s}^2\Big)\Big)$. The resulting optimization problem in the $s$-domain is given by
\begin{subequations}\label{eq:op_discretize}
    \begin{align}
        \min_{\vx_k,u_k} \quad &J = \sum_{k=0}^{N-1} L_k(\vx_k, u_k) + \Phi(\vx_N) \label{eq:op_discretize_objective}\\
        \st \quad & \vx_{k+1} = \vA_k\vx_k + \vb_k u_k, \,\,k=0,1,\ldots,N-1,\label{eq:op_discretize_dynamics} \\
        &\vf_k(\vx_k) \leq \vzero,  \,\,k=1,2,\ldots,N-1, \label{eq:op_discretize_state_constraints}\\
        &\vh_k(\vx_k, u_k) \leq \vzero, \,\,k=0,1,\ldots,N-1,\label{eq:op_discretize_mix_constraints}\\
        &\vx_0\in\mathcal{X}_0, \,\,\vx_N\in\mathcal{X}_\text{f}.
    \end{align}
\end{subequations}
According to the zero-hold of $u(s)$ in each interval, the discretized chain-of-integrator dynamics \eqref{eq:op_discretize_dynamics} is \cite{wang2025time}
\begin{equation}\label{eq:Ab}
    \vA_k=\left[\begin{array}{cccc}
        1 & \Delta_k & \cdots & \frac{\Delta_k^{m-2}}{(m-2)!} \\
        0 & 1 & \cdots & \frac{\Delta_k^{m-3}}{(m-3)!} \\
        \vdots & \vdots & \ddots & \vdots \\
        0 & 0 & \cdots & 1
    \end{array}\right],\,\,\vb_k=\left[\begin{array}{c}
        \frac{\Delta_k^{m-1}}{(m-1)!} \\
        \frac{\Delta_k^{m-2}}{(m-2)!} \\
        \vdots \\
        \Delta_k
    \end{array}\right],
\end{equation}
where $\Delta_k= s_{k+1}-s_{k}>0$. All the functions and sets in problem \eqref{eq:op_discretize} are given, whereas $\vx_k$ and $u_k$ for all $k$ are the decision variables. The applicability of formulation \eqref{eq:op_discretize} to general OPP problems is discussed in Appendix \ref{app:applicability}.

\begin{assumption}\label{assum:feasibility_smoothness}
    Functions $L_k$, $\Phi$, $\vf_k$, and $\vh_k$ are locally Lipschitz continuous in their arguments for each $k$. The sets $\mathcal{X}_0$, $\mathcal{X}_\text{f}$, and $\{(\vx_k,u_k)\mid \vf_k(\vx_k)\leq\vzero,\,\vh_k(\vx_k,u_k)\leq\vzero\}$ are compact for each $k$.
\end{assumption}

\begin{remark}
    Most OPP problems of interest satisfy Assumption \ref{assum:feasibility_smoothness}. For example, the state and control are usually bounded by kinodynamic limits. The objective and constraint functions are usually locally Lipschitz continuous.

    Two exceptions need to be clarified. Denote the derivative of a variable $\bullet$ with respect to $t$ as $\dot{\bullet}=\frac{\mathrm{d}\bullet}{\mathrm{d}t}$. First, for stationary boundary conditions in OPP3, i.e., $\dot{s}=0$ at $s=0$ and $s=s_\text{f}$, the control $u=\frac{\dddot{s}}{\dot{s}}$ can be unbounded even if the jerk limit \eqref{eq:jerk_constraint} is bounded. In this case, the zero-hold of $\frac{\dddot{s}}{\dot{s}}$ in the boundary interval can be replaced by a zero-hold of $\dddot{s}$ \cite[Section 3.2.2]{wang2026online}, where the dynamic matrix and vector in \eqref{eq:Ab} are modified accordingly. Second, for some objectives, such as time objective \eqref{eq:objective_time}, the objective function is ill-defined at the boundary of the state space. For example, $L_k(\vx_k, u_k)=\frac{1}{\sqrt{2x_{k,1}}}$ is not well-defined at $x_{k,1}=0$. In this case, we consider a lower level set of $L_k$, such as $\{x_{k,1}\geq 10^{-12}\}$, to avoid the singularity, the regularization of which is chosen small enough not to affect optimality. So the above two exceptions can be handled by minor modifications of formulation \eqref{eq:op_discretize}, and thus do not violate Assumption \ref{assum:feasibility_smoothness}.
\end{remark}

\subsection{Problem Statement}\label{subsec:problem_statement}

This paper considers two classes of OPP \eqref{eq:op_discretize}, i.e., convex and non-convex problems. Both of the following problems are considered under Assumption \ref{assum:feasibility_smoothness}. Some common OPP2 problems are typically convex, whereas some typical OPP3 problems are non-convex due to the non-convex jerk constraints \eqref{eq:jerk_constraint}.

\begin{problem}[Convex optimal path parameterization, COPP]\label{problem:convex}
    Functions $L_k$, $\Phi$, $\vf_k$, and $\vh_k$ are convex in their arguments for each fixed $k$. Sets $\mathcal{X}_0$ and $\mathcal{X}_\text{f}$ are convex as well. Problem \eqref{eq:op_discretize} is feasible. The goal is to find a globally optimal solution to COPP \eqref{eq:op_discretize}.
\end{problem}

\begin{definition}\label{def:DC_decomposable}
    \cite{sriperumbudur2009convergence}
    Consider a $\mathcal{C}^1$-continuous and non-convex function $f:\Omega\subset\R^n\to\R$ where $\Omega$ is a non-empty convex set. If $f$ can be expressed as the difference of two convex functions (DC), then $f$ is called DC-decomposable. In other words, there exist two $\mathcal{C}^1$-continuous convex functions $f_1,f_2:\Omega\to\R$ such that $f(\vx)=f_1(\vx)-f_2(\vx)$ for all $\vx\in\Omega$. Given $\vx^*\in\Omega$, we define the convexified function of $f$ at $\vx^*$ as follows:
    \begin{equation}
        \overline{f}(\vx;\vx^*)=f_1(\vx)-\left[f_2(\vx^*)+\nabla f_2(\vx^*)^\top(\vx-\vx^*)\right].
    \end{equation}

    Consider a vector-valued DC-decomposable function $\vf=(f_i)_{i=1}^m:\Omega\subset\R^n\to\R^m$, i.e., each component function $f_i$ is DC-decomposable. We define the convexified function of $\vf$ at $\vx^*$ as $\overline{\vf}(\vx;\vx^*)=(\overline{f}_i(\vx;\vx^*))_{i=1}^m$.

    A set $\mathcal{X}=\{\vx\in\R^n\mid \vf(\vx)\leq\vzero,\,\vg(\vx)=\vzero\}$ is called DC-decomposable, if $\vf$ is DC-decomposable and $\vg$ is affine. We define the convexified set of $\mathcal{X}$ at $\vx^*$ as $\overline{\mathcal{X}}(\vx^*)=\{\vx\in\R^n\mid \overline{\vf}(\vx;\vx^*)\leq\vzero,\,\vg(\vx)=\vzero\}$.
\end{definition}

\begin{problem}[General optimal path parameterization, GOPP]\label{problem:general}
    Functions $L_k$, $\Phi$, $\vf_k$, and $\vh_k$ are DC-decomposable. Sets $\mathcal{X}_0$ and $\mathcal{X}_\text{f}$ are also DC-decomposable. The goal is to find a KKT solution to GOPP \eqref{eq:op_discretize}.
\end{problem}

\begin{remark}
    The smoothness assumption in GOPP is stronger than that in COPP for the well-posedness of the KKT condition. In practice, such smoothness requirement is inherently satisfied by the analytical expressions of the objective and constraint functions. As for some special objectives, such as the total variation of the torque \eqref{eq:objective_torque_variation}, $L_k$ might be non-smooth at some points. In this case, $L_k$ can be approximated by a smooth variant, such as a smoothed maximum operator \cite{nesterov2005smooth} or a piecewise polynomial function.
\end{remark}

Both COPP and GOPP have been well addressed in CO-based literature \cite{verscheure2009time,debrouwere2013time,wang2026online} by leveraging off-the-shelf general-purpose solvers. As analyzed in Section \ref{sec:Introduction}, this paper focuses on developing an efficient objective-aware DP framework for solving OPP, where the large-scale optimization problem \eqref{eq:op_discretize} should be decomposed into a sequence of low-dimensional local subproblems.

\subsection{Reachable Set and Value Function}\label{subsec:preliminary_dp}

The RDDP is built on two structural components of DP \cite{bellman1952theory}, i.e., the backward reachable set and the value function. The former provides feasibility preservation, while the latter enables general-objective optimization.

\subsubsection{Backward Reachable Set} To guarantee the feasibility, the backward reachable set \cite{bokanowski2010reachability} is introduced as follows.

\begin{definition}\label{def:reachable_set}
    The backward reachable set $\mathcal{B}_k$ at $s_k$ is the set of states $\vx_k$ from which there exists a feasible trajectory that can reach the terminal set $\mathcal{X}_\text{f}$, i.e., $\forall k=0,1,\dots,N$,
    \begin{align}
        \mathcal{B}_k=\Big\{&\vx_k\in\R^{m-1}\mid\exists \vx_{k+1}, \ldots, \vx_N, u_k, \ldots, u_{N-1}, \notag\\
        &\text{ s.t. } \forall k\leq j<N,\,\vx_{j+1} = \vA_j\vx_j+\vb_j u_j, \notag\\
        &\qquad\vf_j(\vx_j) \leq \vzero,\,\vh_j(\vx_j, u_j) \leq \vzero,\,\vx_N \in \mathcal{X}_\text{f} \Big\}.\label{eq:backward_reachable_definition}
    \end{align}
    By convention, the terminal reachable set is $\mathcal{B}_N=\mathcal{X}_\text{f}$.
\end{definition}

\begin{figure}[!]
    \centering
    \includegraphics[width=0.9\columnwidth]{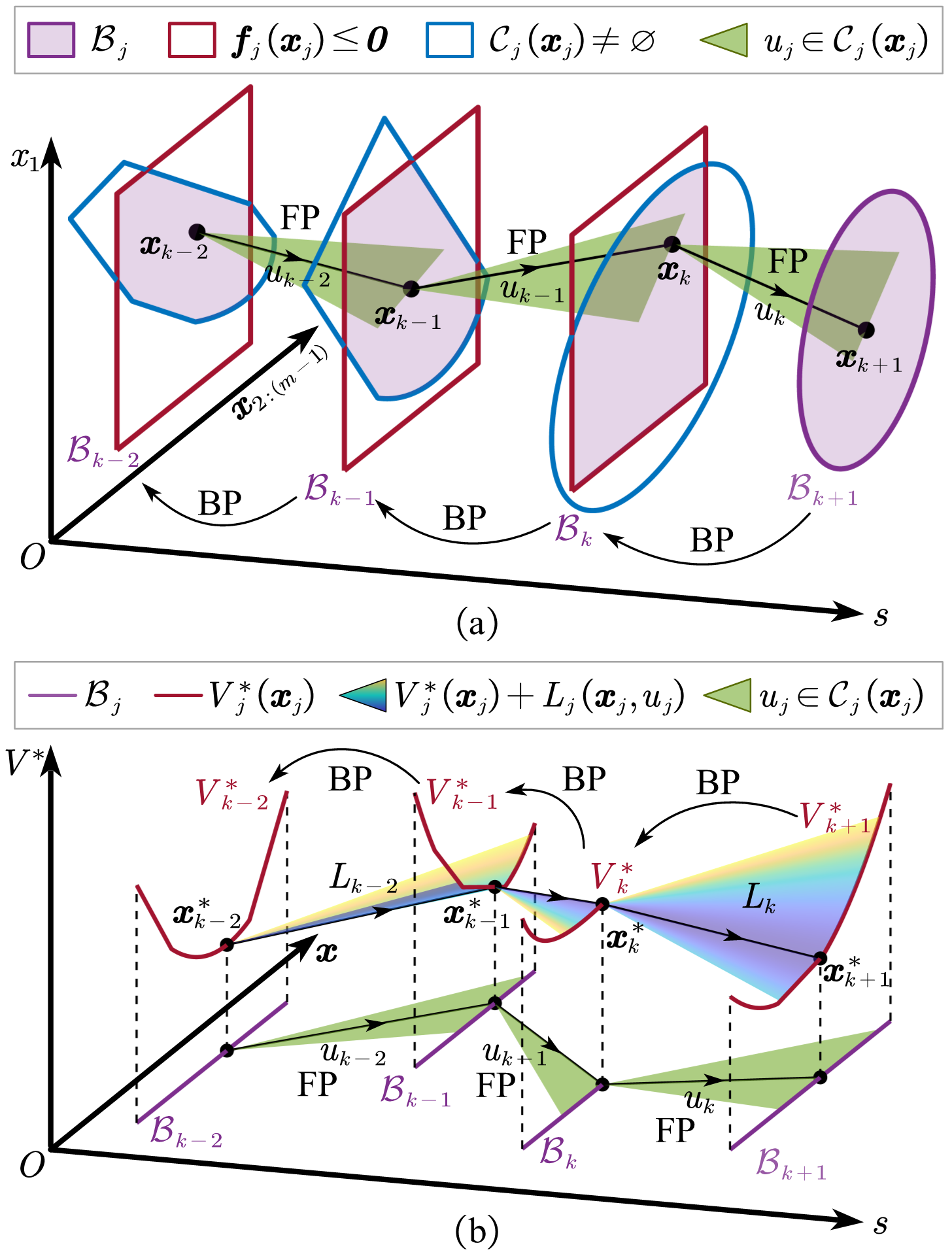}
    \caption{Structural components of RDDP. (a) The backward reachable set $\mathcal{B}_k$ at $s_k$ is propagated backward from the terminal set $\mathcal{X}_\text{f}$ at $s_N$. The forward propagation based on $\mathcal{B}_k$ provides a sufficient and necessary condition for feasibility. (b) The value function $V_k^*$ is propagated backward from the terminal cost $\Phi$ at $s_N$. The forward propagation based on $V_k^*$ provides a sufficient and necessary condition for optimality.}
    \label{fig:dp}
\end{figure}

According to Definition \ref{def:reachable_set}, for $k<N$, the backward reachable set $\mathcal{B}_k$ satisfies the following backward recursion:
\begin{align}
    \mathcal{B}_k=\Big\{&\vx_k\in\R^{m-1}\mid\exists u_k,\text{ s.t. } \vh_k(\vx_k, u_k) \leq \vzero, \notag\\
    &\vf_k(\vx_k)\leq\vzero,\,\vx_{k+1}=\vA_k\vx_k+\vb_k u_k\in\mathcal{B}_{k+1}\Big\}.\label{eq:recursion_backward}
\end{align}
For each state $\vx_k \in \mathcal{B}_k$, the admissible control set is
\begin{align}
    \mathcal{C}_k(\vx_k) =\Big\{&u_k \in \R \mid \vh_k(\vx_k, u_k) \leq \vzero, \notag\\
    &\vx_{k+1}=\vA_k\vx_k+\vb_k u_k\in\mathcal{B}_{k+1}\Big\}.\label{eq:admissable_control_set}
\end{align}
Then, \eqref{eq:recursion_backward} means that $\vx_k\in\mathcal{B}_k$ if and only if $\vx_k$ satisfies the state constraints \eqref{eq:op_discretize_state_constraints} and $\mathcal{C}_k(\vx_k)\neq\varnothing$. 

In fact, \eqref{eq:recursion_backward} provides a backward propagation (BP) approach to compute the backward reachable set $\mathcal{B}_k$ for each $k$. As shown in Fig. \ref{fig:dp}(a), $\mathcal{B}_k$ can be determined by $\mathcal{B}_{k+1}$ and the constraints of $(\vx_k,u_k)$ at $s_k$. Once all $\mathcal{B}_k$ are computed, one can directly generate feasible trajectories by forward propagation (FP) from $\mathcal{B}_0$ to $\mathcal{B}_N$. Specifically, an initial state $\vx_0 \in \mathcal{B}_0\cap \mathcal{X}_0$ is first chosen, and then for each $k$, a control $u_k\in\mathcal{C}_k(\vx_k)\neq\varnothing$ is chosen to get $\vx_{k+1}=\vA_k\vx_k+\vb_k u_k\in\mathcal{B}_{k+1}$ until $s_N$. In this way, $\forall k$, the state $\vx_k$ is guaranteed to be in $\mathcal{B}_k$, and thus the generated trajectory is feasible.

\begin{remark}
    For general optimal control problems, $\mathcal{B}_k$ has no analytical expression and is computationally expensive \cite{bokanowski2010reachability}. The specific structure of second- (COPP2) and third-order COPP (COPP3) permits its efficient evaluation and representation \cite{pham2018new,dio2025time}. More details are given in Section \ref{sec:ardp_opp2_opp3}.
\end{remark}

\subsubsection{Value Function}

The value function \cite{bellman1962applied} $V_k^*(\vx_k)$ is introduced for the optimality as follows.

\begin{definition}
    The value function $V_k^*(\vx_k)$ is defined as the optimal cost-to-go from state $\vx_k$ at $s_k$, i.e., $V_k^*:\mathcal{B}_k \to \R$,
    \begin{subequations}\label{eq:def_value_function}
        \begin{align}
            V_k^*(\vx_k) = \min_{(\vx_{j+1},u_j)_{j\geq k}}&\sum_{j=k}^{N-1} L_j(\vx_j, u_j) + \Phi(\vx_N) \\
            \st\,\,&\vx_{j+1} = \vA_j\vx_j + \vb_j u_j,\,\,j\geq k,\\
            &\vf_j(\vx_j) \leq \vzero,\,\,j> k,\\
            &\vh_j(\vx_j, u_j) \leq \vzero,\,\,j\geq k,\\
            &\vx_N \in \mathcal{X}_\text{f}.
        \end{align}
    \end{subequations}
    By convention, the terminal value function $V_N(\vx_N)$ is defined as $V_N(\vx_N)=\Phi(\vx_N)$ for $\vx_N\in\mathcal{X}_\text{f}=\mathcal{B}_N$.
\end{definition}

The Bellman's optimality equation implies that for $k<N$,
\begin{equation}\label{eq:bellman_value_function}
    V_k^*(\vx_k) = \min_{u_k\in\mathcal{C}_k(\vx_k)} L_k(\vx_k, u_k) + V_{k+1}^*(\vA_k\vx_k+\vb_k u_k).
\end{equation}

As shown in Fig. \ref{fig:dp}(b), the value function $V_k^*$ can be also computed via BP recursively from $s_N$ to $s_0$ by \eqref{eq:bellman_value_function}. Once all $V_k^*$ at each state $x_k$ are computed, one can generate a globally optimal trajectory by FP from $s_0$ to $s_N$. Specifically, an initial state is chosen by $\vx_0^*\in \argmin_{\vx_0\in\mathcal{B}_0\cap \mathcal{X}_0}V^*(\vx_0)$. For each $k$, an optimal control is selected by $u_k^* \in \argmin_{u_k\in\mathcal{C}_k(\vx_k^*)}\Big[ L_k(\vx_k^*, u_k) + V_{k+1}^*(\vA_k\vx_k^*+\vb_k u_k)\Big]$ to get $\vx_{k+1}^*=\vA_k\vx_k^*+\vb_k u_k^*$ until $s_N$. The global optimality is guaranteed by the BPO \cite{bellman1962applied}.

\begin{figure}[!t]
    \centering
    \includegraphics[width=0.99\columnwidth]{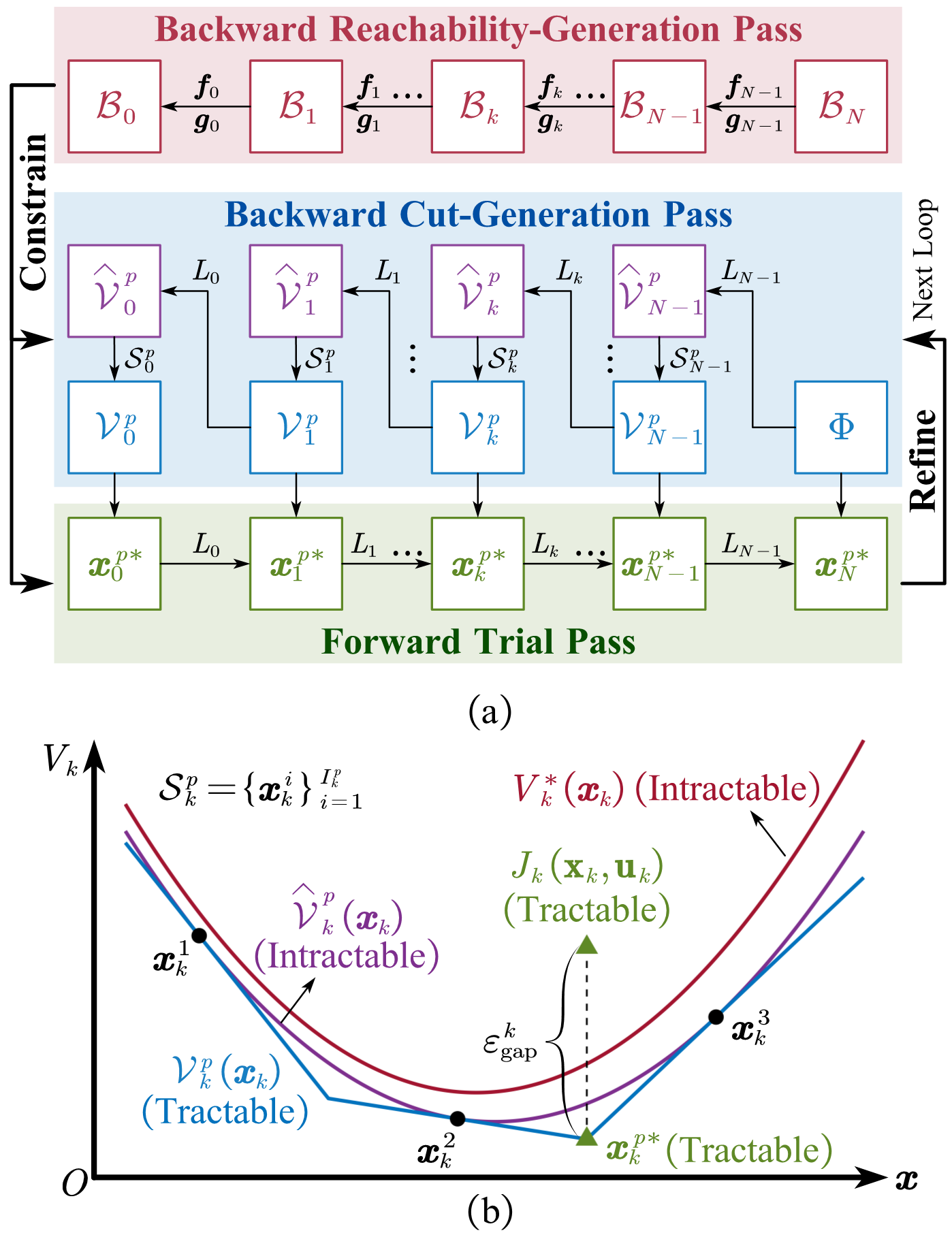}
    \caption{RDDP process for COPP. (a) The dependence relationship in RDDP. (b) The relation between the real value function $V_k^*$, the approximate value function $\mathcal{V}_k^p$, and the one-step approximate value function $\widehat{\mathcal{V}}_k^p$. The DDP optimality gap $\varepsilon_\text{gap}^k$ is defined in \eqref{eq:dual_gap} and \eqref{eq:dual_gap_k}.}
    \label{fig:ardp_one_loop}
\end{figure}

\section{RDDP for COPP with Global Optimality}\label{sec:RDDP_COPP}

To address the intractability of ideal DP in Section \ref{subsec:preliminary_dp}, this section proposes the RDDP framework to solve COPP, i.e., the convex version of OPP defined in Problem \ref{problem:convex}. In contrast to the classical DDP where the relatively complete recourse assumption or feasibility cuts are used to handle the feasibility, RDDP explicitly uses the backward reachable set to guarantee the feasibility of forward trajectories, as presented in Section \ref{subsec:rddp_process}. Under such OPP-specific conditions, RDDP is proved to converge to the global optimum of COPP in Section \ref{subsec:optimality_adrp_copp}. All proofs are provided in Appendix \ref{app:proofs}. 

\subsection{RDDP Process}\label{subsec:rddp_process}

Before introducing the RDDP process, the convexity of the backward reachable set $\mathcal{B}_k$ and the value function $V_k^*$ should be introduced as follows.
\begin{lemma}\label{lemma:convexity_reachset_valfunc}
    In COPP, for each $k$, the backward reachable set $\mathcal{B}_k$ is convex and compact, whereas the value function $V_k^*$ is convex in $\mathcal{B}_k$. The admissible control set $\mathcal{C}_k(\vx_k)\subset\R$ is convex and compact for each $\vx_k\in\mathcal{B}_k$; hence, $\mathcal{C}_k(\vx_k)$ is either a closed interval or a singleton. Specifically, if \eqref{eq:op_discretize_state_constraints} and \eqref{eq:op_discretize_mix_constraints} are linear inequalities, then $\mathcal{B}_k$ is a polytope.
\end{lemma}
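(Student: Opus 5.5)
The argument is a backward induction on $k$ from $N$ down to $0$, using the recursion \eqref{eq:recursion_backward} for $\mathcal{B}_k$ and the Bellman equation \eqref{eq:bellman_value_function} for $V_k^*$.

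\textbf{Base case.} At $k=N$ we have $\mathcal{B}_N=\mathcal{X}_\text{f}$. This set is convex by Problem \ref{problem:convex} and compact by Assumption \ref{assum:feasibility_smoothness}. The function $V_N^*=\Phi$ is convex on it.

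\textbf{Inductive step for $\mathcal{B}_k$.} Assume $\mathcal{B}_{k+1}$ is convex and compact. Define the lifted set
\[
\mathcal{G}_k=\{(\vx_k,u_k)\mid \vf_k(\vx_k)\leq\vzero,\ \vh_k(\vx_k,u_k)\leq\vzero,\ \vA_k\vx_k+\vb_ku_k\in\mathcal{B}_{k+1}\}.
\]
It is convex because it is the intersection of sublevel sets of convex functions with the preimage of a convex set under an affine map. It is compact because it is the intersection of the compact set from Assumption \ref{assum:feasibility_smoothness} with the preimage of a compact set under a continuous map, which is closed. By \eqref{eq:recursion_backward}, $\mathcal{B}_k$ is the projection of $\mathcal{G}_k$ onto the $\vx_k$-coordinates. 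A linear image of a convex compact set is convex and compact. For $k=0$, Assumption \ref{assum:feasibility_smoothness} imposes no state constraint $\vf_0$, but the set $\{(\vx_0,u_0)\mid\vh_0\leq\vzero\}$ is still compact, so the same argument applies.

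\textbf{Admissible control sets.} For fixed $\vx_k\in\mathcal{B}_k$, the set $\mathcal{C}_k(\vx_k)$ is exactly the slice of $\mathcal{G}_k$ at that $\vx_k$. A slice of a convex compact set is convex and compact, and it is nonempty by the definition of $\mathcal{B}_k$. A nonempty convex compact subset of $\R$ is a closed interval $[a,b]$, possibly with $a=b$, i.e., a singleton.

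\textbf{Inductive step for $V_k^*$.} Assume $V_{k+1}^*$ is convex on $\mathcal{B}_{k+1}$. Define $F(\vx_k,u_k)=L_k(\vx_k,u_k)+V_{k+1}^*(\vA_k\vx_k+\vb_ku_k)$ on $\mathcal{G}_k$. It is jointly convex as the sum of a convex function and a convex function composed with an affine map. The Bellman equation gives $V_k^*(\vx_k)=\min_{u_k}F(\vx_k,u_k)$, the partial minimization of a jointly convex function over a convex set.

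To check convexity, take $\vx,\vx'\in\mathcal{B}_k$ with minimizers $u,u'$, and let $\lambda\in[0,1]$. The point $(\lambda\vx+(1-\lambda)\vx',\ \lambda u+(1-\lambda)u')$ lies in $\mathcal{G}_k$ by convexity. Hence
\[
V_k^*(\lambda\vx+(1-\lambda)\vx')\leq \lambda F(\vx,u)+(1-\lambda)F(\vx',u')=\lambda V_k^*(\vx)+(1-\lambda)V_k^*(\vx').
\]

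\textbf{Main obstacle: attainment of the minimum.} The argument above needs minimizers $u,u'$ to exist, so that $V_k^*$ is finite and the minimum in \eqref{eq:bellman_value_function} is attained. This follows from lower semicontinuity of $V_{k+1}^*$ on the compact set $\mathcal{B}_{k+1}$.

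I would carry continuity through the induction. $L_k$ is continuous by Assumption \ref{assum:feasibility_smoothness}. For $V_{k+1}^*$, I would use that it is the optimal value of a minimization over the compact set $\mathcal{G}_{k+1}$ and is therefore lower semicontinuous. This can be shown either by Berge-type arguments or directly: take a sequence $\vx^j\to\vx$ with minimizers $u^j$, extract a subsequence with $u^j\to\bar u$, note that $(\vx,\bar u)\in\mathcal{G}_{k+1}$ because it is closed, and conclude $V_{k+1}^*(\vx)\leq\liminf_j V_{k+1}^*(\vx^j)$. Lower semicontinuity is all Weierstrass needs, so the minimum over the compact set $\mathcal{C}_k(\vx_k)$ is attained.

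\textbf{Polytope case.} If \eqref{eq:op_discretize_state_constraints} and \eqref{eq:op_discretize_mix_constraints} are linear and $\mathcal{X}_\text{f}$ is a polytope, as it is in the singleton case, then by induction $\mathcal{G}_k$ is a bounded polyhedron. Its projection $\mathcal{B}_k$ is then a polytope by Fourier--Motzkin elimination, or equivalently because the linear image of a polytope is the convex hull of finitely many images of vertices.
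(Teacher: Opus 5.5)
Your proof is correct, but it takes a different route from the paper.

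\textbf{The paper's route.} The paper uses no induction. It writes $\mathcal{B}_k$ directly as the projection onto the $\vx_k$-coordinates of the whole tail feasible set $\widehat{\mathcal{B}}_k$ of $(\vx_k,\dots,\vx_N,u_k,\dots,u_{N-1})$. That set is convex in COPP and compact by Assumption~\ref{assum:feasibility_smoothness}. Convexity of $V_k^*$ then follows in one step from the epigraph-projection lemma applied to the convex tail cost on $\widehat{\mathcal{B}}_k$. In that formulation, attainment of the minimum is just Weierstrass for a continuous cost on a compact set, so the issue you flag as the main obstacle never arises.

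\textbf{What your route buys.} Your stage-wise induction through the one-step recursion and the Bellman equation mirrors the backward pass that RDDP actually performs. It also makes explicit three points the paper glosses over: attainment of the minimum, the missing $\vf_0$, and the need for $\mathcal{X}_\text{f}$ to be a polytope in the last claim. The paper's phrase ``all the constraints are linear'' tacitly includes that last hypothesis, and it is genuinely needed. For example, a disk $\mathcal{X}_\text{f}$ with $|u_{N-1}|\leq 1$ and loose state bounds makes $\mathcal{B}_{N-1}$ the image under $\vA_{N-1}^{-1}$ of a disk swept along a segment, which is not polyhedral.

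\textbf{What it costs.} Lower semicontinuity must be an explicit part of your induction hypothesis. Your sequence argument for $V_{k+1}^*$ needs lower semicontinuity of $(\vx,u)\mapsto L_{k+1}(\vx,u)+V_{k+2}^*(\vA_{k+1}\vx+\vb_{k+1}u)$, and hence of $V_{k+2}^*$. Convexity alone does not supply this, since a finite convex function on a compact interval can jump up at an endpoint. Once you state that hypothesis, or borrow the tail formulation for this single step, your proof is complete.
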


Each loop of the standard DDP consists of a backward cut-generation pass and a forward trial pass, as shown in Fig. \ref{fig:ardp_one_loop}(a). For the $p$-th loop, the backward pass is to construct a tractable cut-based lower approximation $\mathcal{V}_k^p$ of the convex value function $V_k^*$ at each $s_k$. The forward pass is to generate a trial trajectory $(\bx_0^{p*}, \bu_0^{p*})$ guided by the approximate value function $\mathcal{V}_k^p$ and then refine the cut-approximation $\mathcal{V}_k^p$ with the trial trajectory $(\bx_0^{p*}, \bu_0^{p*})$. Different from the standard DDP, RDDP generates the backward reachable set $\mathcal{B}_k$ by BP \eqref{eq:recursion_backward} before the first loop to explicitly guarantee the feasibility of the forward trial trajectory. In this section, we assume that the backward reachable set $\mathcal{B}_k$ has been exactly computed for each $k$ in the backward reachability-generation pass. The details of the backward and forward passes are presented as follows.

\subsubsection{Backward Cut-Generation Pass}\label{sub2sec:backward_cut_pass}

At the beginning of the $p$-th loop, assume that a cut-generation set $\mathcal{S}_k^p=\{\vx_k^{i}\}_{i=1}^{I_k^p}\subset\mathcal{B}_k$ is available for each $k$, where $I_k^p\in\N^*$ is the sample number. Specifically, for $p=0$, these points can be sampled by a coarse grid or randomly. For $p>0$, the update of the cut-generation set will be analyzed in Section \ref{sub2sec:refinement}.

At the $p$-th loop, the backward pass aims to construct a tractable cut-based approximation $\mathcal{V}_k^p$ of the real value function $V_k^*$ at $s_k$. Let the terminal value be
\begin{equation}\label{eq:approximation_terminal_value}
    \mathcal{V}_N^p=\Phi=V_N^*.
\end{equation}
We aim to compute the one-step approximate value function $\widehat{\mathcal{V}}_k^p(\vx_k):\mathcal{B}_k\to\R$, where
\begin{equation}\label{eq:approximate_value_function_backward}
    \widehat{\mathcal{V}}_k^p(\vx_k) = \min_{u_k\in\mathcal{C}_k(\vx_k)} L_k(\vx_k, u_k) + \mathcal{V}_{k+1}^p(\vA_k\vx_k+\vb_k u_k).
\end{equation}
However, $\widehat{\mathcal{V}}_k^p$ remains intractable since there are uncountably many state points $\vx_k$ in $\mathcal{B}_k$. Leveraging the convexity of $V_k^*$, Section \ref{subsec:optimality_adrp_copp} will prove the convexity of $\widehat{\mathcal{V}}_k^p$. A lower approximation $\mathcal{V}_k^p$ of $\widehat{\mathcal{V}}_k^p$ is defined by optimality cuts, i.e., supporting hyperplanes of $\widehat{\mathcal{V}}_k^p$, at the cut-generation set $\mathcal{S}_k^p$. Specifically, $\forall \vx_k\in\mathcal{B}_k$, let
\begin{equation}\label{eq:support_plane}
    \mathcal{V}_k^p(\vx_k) = \max_{i=1,\ldots,I_k^p} \Big\{\widehat{\mathcal{V}}_k^p(\vx_k^i) + \vg_k^{i\top}(\vx_k-\vx_k^i)\Big\},
\end{equation}
where the cut slope $\vg_k^i\in\partial^\circ\widehat{\mathcal{V}}_k^p(\vx_k^i)$ is chosen as a Clarke subgradient \cite{clarke1990optimization} of $\widehat{\mathcal{V}}_k^p$ at $\vx_k^i$. Note that $\mathcal{C}_k(\vx_k^i)$ is an interval or singleton for each sample $\vx_k^i$. Since $\mathcal{V}_{k+1}^p$ is convex, $\widehat{\mathcal{V}}_k^p(\vx_k^i)$ can be efficiently computed through one-dimensional convex optimization \eqref{eq:approximate_value_function_backward} for each sample $\vx_k^i$.

As summarized in Fig. \ref{fig:ardp_one_loop}(b) and proved in Section \ref{subsec:optimality_adrp_copp}, it holds pointwise in $\mathcal{B}_k$ that $\mathcal{V}_k^p\leq \widehat{\mathcal{V}}_k^p\leq V_k^*$. Therefore, the tractable cut-approximation $\mathcal{V}_k^p$ also serves as a lower bound of the real value function $V_k^*$.

\begin{remark}
    In standard DDP, $\vg_k^i$ can be computed from dual multipliers of the stage optimization problem \eqref{eq:approximate_value_function_backward}, which can equivalently be interpreted as subgradients of $\widehat{\mathcal{V}}_k^p$. Instead of explicitly forming the dual of problem \eqref{eq:approximate_value_function_backward}, this paper efficiently computes the sensitivity vector $\vg_k^i$ using the Clarke subgradient. Furthermore, $\vg_k^i$ is required to be a Clarke subgradient rather than a general subgradient to ensure a tight approximation of $\widehat{\mathcal{V}}_k^p$. Consider a toy example with the state $x\in\R$. Assume that $\widehat{\mathcal{V}}_k^p(x)=x$ for $x\in\mathcal{B}_k=[0,1]$. Then, the Clarke subgradient at $x=0$ is $\partial^\circ\widehat{\mathcal{V}}_k^p(0)=\{1\}$, while the general subgradient set is $\partial\widehat{\mathcal{V}}_k^p(0)=(-\infty,1]$. In this example, we need to tightly approximate $\widehat{\mathcal{V}}_k^p$ by the optimality cut $\mathcal{V}_k^p(x)=x$ at $x=0$, while any other optimality cut, such as $\mathcal{V}_k^p(x)=0$, is loose.

    As will be shown in Section \ref{subsec:optimality_adrp_copp} and Appendix \ref{app:proofs}, the selection of Clarke subgradients is essential for the convergence and optimality guarantee of RDDP under the OPP-specific conditions on backward reachable sets.
\end{remark}

\subsubsection{Forward Trial Pass}\label{sub2sec:refinement}

In the forward trial pass for the $p$-th loop, the trial trajectory $\bx_0^{p*}$ is generated under the guidance of the approximate value function $\mathcal{V}_k^p$, as shown in Fig. \ref{fig:ardp_one_loop}(a). Assume that $\mathcal{V}_k^p$ has been constructed in the backward pass. The trial trajectory is first generated as follows:
\begin{subequations}\label{eq:forward_refine}
    \begin{align}
        &\vx_0^{p*}\in \argmin_{\vx\in\mathcal{B}_0\cap \mathcal{X}_0} \mathcal{V}_0^p(\vx),\label{eq:refine_x0}\\
        &u_k^{p*} \in \argmin_{u\in\mathcal{C}_k(\vx_k^{p*})} L_k(\vx_k^{p*}, u) + \mathcal{V}_{k+1}^p(\vA_k\vx_k^{p*}+\vb_k u),\label{eq:refine_uk}\\
        &\vx_{k+1}^{p*} = \vA_k\vx_k^{p*}+\vb_k u_k^{p*}.
    \end{align}
\end{subequations}
\eqref{eq:approximate_value_function_backward} and \eqref{eq:forward_refine} imply that for each $k$,
\begin{equation}\label{eq:widehat_V_L_V_refine}
    \widehat{\mathcal{V}}_k^p(\vx_k^{p*}) = L_k(\vx_k^{p*}, u_k^{p*}) + \mathcal{V}_{k+1}^p(\vx_{k+1}^{p*}).
\end{equation}
Then, the trial trajectory $\bx_0^{p*}$ is added to the cut-generation set for the next loop, i.e.,
\begin{equation}\label{eq:ardp_more_sample}
    \mathcal{S}_k^{p+1}\supset\mathcal{S}_k^{p}\cup\{\vx_k^{p*}\}.
\end{equation}
Note that one can also add more cut-generation points around $\vx_k^{p*}$ to further refine the approximation. Based on the updated $\mathcal{S}_k^{p+1}$, the backward cut-generation pass is performed to get $\mathcal{V}_k^{p+1}$, and then the forward trial pass is performed again to get $\bx_0^{(p+1)*}$. The above process is repeated until the trial trajectory converges or the maximum number of loops is reached.

\begin{remark}
    For each $k$, $u_k^{p*}$ and $\vx_{k+1}^{p*}$ are obtained by solving the one-dimensional convex optimization problem \eqref{eq:refine_uk}. According to \eqref{eq:refine_x0}, a convex optimization problem of dimension no greater than $m-1$ is solved to get $\vx_0^{p*}$. In practice, the initial state $\vx_0$ is usually specified, i.e., $\mathcal{X}_0=\{\vx_0\}$ is fixed. Due to the feasibility of COPP \eqref{eq:op_discretize} in Problem \ref{problem:convex}, $\vx_0\in\mathcal{B}_0$ holds. In this case, the initial state is directly set as $\vx_0^{p*}=\vx_0$ without solving any optimization problem.
\end{remark}

\begin{algorithm}[!t]
    \caption{RDDP for COPP.}
    \label{alg:ardp_copp}
    \begin{algorithmic}[1]
        \REQUIRE COPP \eqref{eq:op_discretize}, user-defined sampling scheme of  cut-generation sets, and convergence criterion;
        \ENSURE Globally optimal trajectory $\{\vx_k^*\}_{k=0}^{N}, \{u_k^*\}_{k=0}^{N-1}$;
        \STATE Initialize $p\leftarrow 0$;
        \STATE Initialize the terminal backward reachable set $\mathcal{B}_N=\mathcal{X}_\text{f}$;
        \REPEAT
            \FOR{$k\leftarrow N-1, N-2, \ldots, 0$}
                \IF{$p=0$}
                    \STATE Compute the backward reachable set $\mathcal{B}_k$ based on $\mathcal{B}_{k+1}$ via BP \eqref{eq:recursion_backward};
                    \STATE Sample $\mathcal{S}_k^0=\{\vx_k^i\}_{i=1}^{I_k^p}\subset\mathcal{B}_k$ with $I_k^p\geq1$;
                \ELSE
                    \STATE Update $\mathcal{S}_k^p\supset\mathcal{S}_k^{p-1}\cup\Big\{\vx_k^{(p-1)*}\Big\}$;
                \ENDIF
                \STATE Compute the cut-based approximated value $\widehat{\mathcal{V}}_k^p(\vx_k^i)$ and a Clarke subgradient $\vg_k^i$ at each sample $\vx_k^i\in\mathcal{S}_k^p$ based on $\mathcal{V}_{k+1}^p$ via BP \eqref{eq:approximate_value_function_backward};
                \STATE Approximate the value function based on optimality cuts at $\mathcal{S}_k^p$ and get $\mathcal{V}_k^p:\mathcal{B}_k\to\R$ via \eqref{eq:support_plane};
            \ENDFOR
            \STATE Find $\vx_0\in\mathcal{B}_0\cap \mathcal{X}_0$ to minimize $\mathcal{V}_0^p$ via \eqref{eq:refine_x0};
            \FOR{$k\leftarrow 0, 1, \ldots, N-1$}
                \STATE Find the trial control $u_k^{p*}$ to minimize $L_k(\vx_k^{p*}, u) + \mathcal{V}_{k+1}^p(\vA_k\vx_k^{p*}+\vb_k u)$ via \eqref{eq:refine_uk};
                \STATE Get $\vx_{k+1}^{p*} = \vA_k\vx_k^{p*}+\vb_k u_k^{p*}$;
            \ENDFOR
            \STATE Compute the real cost of $\{\vx_k^{p*}\}_{k=0}^{N}$ based on the real objective function \eqref{eq:op_discretize_objective}, and update the historical best trial trajectory;
        \UNTIL{the convergence criterion is satisfied;}
        \STATE \textbf{return} the historical best trial trajectory $\{\vx_k^{*}\}_{k=0}^{N}$, $\{u_k^{*}\}_{k=0}^{N-1}$ with the minimum real cost.
    \end{algorithmic}
\end{algorithm}

The proposed RDDP is summarized in Algorithm \ref{alg:ardp_copp}, where significant algorithmic flexibility is offered to allow users' tailored configurations. First, the user can compute the bidirectional reachable set and sample state points within it, which can improve the sample efficiency and achieve higher approximation accuracy. This strategy will be applied in COPP2 in Section \ref{subsec:ardp_copp2}. Second, the user can choose different sampling schemes for each loop. The only requirement is that the trial trajectory of the previous loop, i.e., $\vx_k^{(p-1)*}$, should be included in the cut-generation set $\mathcal{S}_k^p$ for each $p,k$. Third, the approximation value $\mathcal{V}_k^p(\vx_k^i)$ and its Clarke subgradient might keep unchanged across different loops, which can be reused to save computational costs. Finally, the convergence criterion is recommended to define based on the DDP optimality gap which will be introduced in Theorem \ref{thm:near_optimality_gap}.

\subsection{Global Optimality Guarantee}\label{subsec:optimality_adrp_copp}

This section proves the convergence and optimality of RDDP for COPP under the OPP-specific condition. Before that, we first investigate the existence of Clarke subgradient and the Lipschitz continuity of $\widehat{\mathcal{V}}_k^p$ in \eqref{eq:approximate_value_function_backward} for well-posedness. Different from the standard DDP under the relatively complete recourse assumption, this paper only requires the following regularity condition on the admissible control set.

\begin{assumption}\label{assum:Lipschitz}
    For each $k$, the admissible control set $\mathcal{C}_k(\vx_k)$ can be represented as 
    \begin{equation}
    \mathcal{C}_k(\vx_k) = \{u\in\R \mid \underbar{u}_k(\vx_k)\leq u\leq \bar{u}_k(\vx_k)\},
    \end{equation}
    where $\underbar{u}_k$ and $-\bar{u}_k$ are convex. Assume that $\underbar{u}_k$ and $\bar{u}_k$ are both locally Lipschitz continuous in their domains.
\end{assumption}

\begin{remark}
    Assumption \ref{assum:Lipschitz} is typically satisfied in OPP. In the widely studied OPP2 and OPP3, the kinodynamic constraints and their convexified variants are often affine in state and control \cite[Appendix A.2]{wang2026online}. In this case, $\underbar{u}_k$ and $\bar{u}_k$ are piecewise linear and thus locally Lipschitz continuous. Only in degenerate cases can the local Lipschitz continuity fail in the boundary of $\mathcal{B}_k$. For example, if $u_k\leq\bar{u}_k(\vx_k)=\sqrt{1-x_{k,1}^2}$, then $\bar{u}_k$ is not locally Lipschitz continuous at boundary points $x_{k,1}=\pm1$. For such an uncommon case, one can consider a slightly contracted feasible domain to exclude the singularity. Therefore, Assumption \ref{assum:Lipschitz} is not restrictive for the OPP instances considered in this paper.
\end{remark}

\begin{proposition}\label{prop:Lipschitz}
    For each $k$, $\widehat{\mathcal{V}}_k^p$ is convex and locally Lipschitz continuous in $\mathcal{B}_k$ with the local Lipschitz constant independent of $p$. In other words, $\forall\vx_k\in\mathcal{B}_k$, $\exists \varepsilon_k^V(\vx_k)>0$ small enough and $C_k^V(\vx_k)>0$ independent of $p$, s.t. $\forall p\geq0$, $\vy_1,\vy_2\in\mathcal{B}_k\cap\mathbb{B}(\vx_k,\varepsilon_k^V(\vx_k))$, we have
    \begin{equation}
        \abs{\widehat{\mathcal{V}}_k^p(\vy_1)-\widehat{\mathcal{V}}_k^p(\vy_2)}\leq C_k^V(\vx_k)\norm[2]{\vy_1-\vy_2},
    \end{equation}
    where $\mathbb{B}(\vx_k,\varepsilon_k^V(\vx_k))$ is the Euclidean ball centered at $\vx_k$ with radius $\varepsilon_k^V(\vx_k)$.  Furthermore, the Clarke subgradient set $\partial^\circ\widehat{\mathcal{V}}_k^p(\vx_k)$ is non-empty and bounded by $C_k^V(\vx_k)$. In other words, $\forall \vg_k\in\partial^\circ\widehat{\mathcal{V}}_k^p(\vx_k)$, we have $\norm[2]{\vg_k}\leq C_k^V(\vx_k)$.

    The above conclusion also holds for $\mathcal{V}_k^p$ and the real value function $V_k^*$ with the same local Lipschitz constant $C_k^V(\vx_k)$.
\end{proposition}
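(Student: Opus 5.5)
We prove all three statements together by backward induction on $k$, and strengthen the claim slightly. The induction hypothesis at stage $k+1$ is that $\mathcal{V}_{k+1}^p$, $\widehat{\mathcal{V}}_{k+1}^p$ and $V_{k+1}^*$ are convex on $\mathcal{B}_{k+1}$. It also asserts that they are Lipschitz on the whole compact set $\mathcal{B}_{k+1}$ with one constant $\bar C_{k+1}$ independent of $p$. The base case is $k+1=N$, where all three functions equal $\Phi$ by \eqref{eq:approximation_terminal_value}. There the claim follows from Assumption \ref{assum:feasibility_smoothness}, which makes $\Phi$ locally Lipschitz and $\mathcal{X}_\text{f}$ compact, together with convexity of $\Phi$ in COPP.

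Working with a global constant is the device that removes the dependence on $p$. For convex $\mathcal{B}_k$ (Lemma \ref{lemma:convexity_reachset_valfunc}), local Lipschitz continuity upgrades to global Lipschitz continuity in two steps. First, compactness gives a finite cover by balls $\mathbb{B}(\vx,\varepsilon(\vx))$ and a Lebesgue number for it. Second, we chain along the segment $[\vy_1,\vy_2]\subset\mathcal{B}_k$, each piece shorter than the Lebesgue number.

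\textbf{Convexity.} Assume $\mathcal{V}_{k+1}^p$ is convex; it is a pointwise max of affine cuts for $k+1<N$, and equals $\Phi$ for $k+1=N$. Then $(\vx,u)\mapsto L_k(\vx,u)+\mathcal{V}_{k+1}^p(\vA_k\vx+\vb_k u)$ is jointly convex. By Assumption \ref{assum:Lipschitz}, the graph $\{(\vx,u)\mid \vx\in\mathcal{B}_k,\ \underbar{u}_k(\vx)\leq u\leq\bar u_k(\vx)\}$ is convex, since $\underbar{u}_k$ and $-\bar u_k$ are convex. Partial minimization over $u$ of a jointly convex function on a convex set is convex, so $\widehat{\mathcal{V}}_k^p$ is convex. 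The same argument applied to \eqref{eq:bellman_value_function} gives convexity of $V_k^*$, and $\mathcal{V}_k^p$ is a max of affine functions.

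\textbf{Lipschitz bound independent of $p$.} We reparametrize the control as $u=\underbar{u}_k(\vx)+\theta\,(\bar u_k(\vx)-\underbar{u}_k(\vx))$ with $\theta\in[0,1]$. This gives
\begin{equation*}
\widehat{\mathcal{V}}_k^p(\vx)=\min_{\theta\in[0,1]} G^p(\vx,\theta),\qquad G^p(\vx,\theta)=L_k(\vx,u(\vx,\theta))+\mathcal{V}_{k+1}^p\big(\vA_k\vx+\vb_k u(\vx,\theta)\big).
\end{equation*}
For every feasible $\theta$, the next state $\vA_k\vx+\vb_k u(\vx,\theta)$ lies in $\mathcal{B}_{k+1}$, so the induction hypothesis applies to it. Near any $\vx_k\in\mathcal{B}_k$, the map $\vx\mapsto u(\vx,\theta)$ is Lipschitz uniformly in $\theta$, with constant $C_{\underbar u}+C_{\bar u}$ from Assumption \ref{assum:Lipschitz}. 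Hence $G^p(\cdot,\theta)$ is Lipschitz near $\vx_k$ with constant
\begin{equation*}
C_k^V(\vx_k)=C_L\,(1+C_{\underbar u}+C_{\bar u})+\bar C_{k+1}\big(\|\vA_k\|+\|\vb_k\|(C_{\underbar u}+C_{\bar u})\big),
\end{equation*}
where $C_L$ is the local Lipschitz constant of $L_k$. None of these quantities depends on $p$. A pointwise minimum of a family of functions sharing one Lipschitz constant inherits that constant, which gives the local statement for $\widehat{\mathcal{V}}_k^p$. The identical argument, with $V_{k+1}^*$ in place of $\mathcal{V}_{k+1}^p$, gives the same constant for $V_k^*$. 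Passing to a global constant $\bar C_k$ on $\mathcal{B}_k$ by the covering argument above closes the induction for $\widehat{\mathcal{V}}_k^p$ and $V_k^*$.

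\textbf{Clarke subgradients and $\mathcal{V}_k^p$.} A locally Lipschitz function has a nonempty Clarke subdifferential whose elements are bounded in norm by the local Lipschitz constant. $\mathcal{B}_k$ may have empty interior (it can be a boundary point set or lower-dimensional near the terminal stage). We therefore apply this fact to a McShane extension of $\widehat{\mathcal{V}}_k^p$ to $\R^{m-1}$, which keeps the Lipschitz constant. Every cut slope $\vg_k^i$ then satisfies $\|\vg_k^i\|_2\leq\bar C_k$. Consequently $\mathcal{V}_k^p$, a maximum of affine functions with slopes bounded by $\bar C_k$, is $\bar C_k$-Lipschitz on $\mathcal{B}_k$. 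This completes the induction for all three functions.

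I expect the main obstacle to be this boundary and degenerate case. The Clarke subgradient must be interpreted consistently there, for instance via the extension, or relative to the affine hull of $\mathcal{B}_k$. We must also check that the resulting bound does not degrade where $\bar u_k-\underbar{u}_k$ collapses to zero. The $\theta$-reparametrization is meant to handle this, because it never divides by the interval length.
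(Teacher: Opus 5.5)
Your proof is correct, and it takes a genuinely different route from the paper's.

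\textbf{Lipschitz step.} The paper keeps everything local. It covers $\mathcal{C}_k(\vx)$ by finitely many balls on which $L_k(\vy,u)+\mathcal{V}_{k+1}^p(\vA_k\vy+\vb_k u)$ is Lipschitz. It then clips an optimal control $v^*$ at $\vy_1$ into $\mathcal{C}_k(\vy_2)$ and chains the estimate along convex combinations of the two state--control pairs. This tacitly uses convexity of the graph of $\mathcal{C}_k$ to keep the intermediate next states in $\mathcal{B}_{k+1}$. Your $\theta$-reparametrization freezes the feasible set to $[0,1]$, so $\widehat{\mathcal{V}}_k^p$ becomes an infimum of a uniformly Lipschitz family. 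This removes the clipping and the chain, gives an explicit constant, and is unaffected by $\bar{u}_k=\underbar{u}_k$. The only compactness you need over controls is the routine step giving a uniform $C_L$ for $L_k$ near $\{\vx_k\}\times\mathcal{C}_k(\vx_k)$.

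\textbf{Global constant.} The more substantive gain is your global constant $\bar C_k$, whose decisive use is the step for $\mathcal{V}_k^p$. The paper concludes that $\mathcal{V}_k^p$ inherits the same \emph{local} constant $C_k^V(\vx_k)$ because it is a maximum of supporting hyperplanes. But the cut slopes are subgradients at sample points that may be far from $\vx_k$, and such cuts can be active near $\vx_k$. For example, $\widehat{\mathcal{V}}(x)=100x^2$ on $[0,1]$ with cuts at $0$ and $1$ gives $\max\{0,200x-100\}$. Its slope near $x=1/2$ is $200$, while $\widehat{\mathcal{V}}$ has local constant about $100$ there. Bounding every slope by a $p$-independent $\bar C_k$ repairs this and keeps the induction hypothesis on $\mathcal{V}_{k+1}^p$ sound. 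To match the statement's single constant, take $C_k^V(\vx_k):=\bar C_k$.

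\textbf{Boundary subgradients.} Your McShane-extension reading of $\partial^\circ$ at boundary points is more careful than the paper's bare appeal to Clarke. It is also compatible with the cut construction, for a reason you should state explicitly. The map $\vx\mapsto\inf_{\vy\in\mathcal{B}_k}\{\widehat{\mathcal{V}}_k^p(\vy)+\bar C_k\|\vx-\vy\|_2\}$ is convex, being a partial minimization of a jointly convex function. Hence its Clarke subgradients are convex subgradients and yield valid minorants on $\mathcal{B}_k$, as Proposition~\ref{prop:bound_of_value_function} requires. This reading can admit more slopes than the paper's remark intends: for $x$ on $[0,1]$ it gives $[-1,1]$ at $0$ rather than $\{1\}$. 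That is harmless, since downstream only boundedness and validity of the cuts are used.
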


Proposition \ref{prop:Lipschitz} guarantees the well-posedness of the RDDP process under OPP-specific settings. In the following analysis, Proposition \ref{prop:bound_of_value_function}, Theorems \ref{prop:bound_of_value_function} and \ref{thm:near_optimality_gap} recover a DDP-like upper-lower-bound convergence structure under OPP-specific feasibility and regularity conditions. The assumptions of RDDP, however, are different from those in classical DDP. Classical DDP usually establishes such results under relatively complete recourse over a prescribed state domain. In contrast, RDDP explicitly uses the backward reachable sets $\mathcal{B}_k$ as the domain of the value functions and enforces recourse feasibility through the constraint $\vx_{k+1}\in\mathcal{B}_{k+1}$. As a result, the optimality theory below is built on OPP-specific reachable-domain conditions and local Lipschitz regularity of the admissible-control boundaries, rather than on a global recourse assumption over the entire state space.

\begin{proposition}\label{prop:bound_of_value_function}
    In the backward cut-generation pass of RDDP, for each $k$ and $p$, consider any feasible solution $\bx_k=(\vx_j)_{j=k}^N$, $\bu_k=(u_j)_{j=k}^{N-1}$ of COPP \eqref{eq:op_discretize}. We have
    \begin{equation}\label{eq:bound_of_value_function}
        \mathcal{V}_k^p(\vx_k) \leq\widehat{\mathcal{V}}_k^p(\vx_k) \leq V_k^*(\vx_k) \leq J_k(\bx_k,\bu_k),
    \end{equation}
    where $J_k(\bx_k,\bu_k)$ is the real cost in \eqref{eq:op_discretize_objective}, i.e.,
    \begin{equation}
        J_k(\bx_k,\bu_k) = \sum_{j=k}^{N-1} L_j(\vx_j, u_j) + \Phi(\vx_N).
    \end{equation}
\end{proposition}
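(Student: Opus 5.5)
We prove the three inequalities in \eqref{eq:bound_of_value_function} separately. The first and last are nearly immediate. The middle one needs a backward induction on $k$ for fixed $p$.

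\emph{Rightmost inequality.} Take a feasible $(\bx_k,\bu_k)$. The tail constraints of COPP \eqref{eq:op_discretize} from stage $k$ on are exactly the constraints in \eqref{eq:def_value_function}. In particular $\vx_N\in\mathcal{X}_\text{f}$ holds, so $\vx_k\in\mathcal{B}_k$ by Definition \ref{def:reachable_set}, and $(\bx_k,\bu_k)$ is feasible for the minimization defining $V_k^*(\vx_k)$. Hence $V_k^*(\vx_k)\leq J_k(\bx_k,\bu_k)$. The minimum is attained by compactness (Assumption \ref{assum:feasibility_smoothness}) and continuity, so $V_k^*$ is finite on $\mathcal{B}_k$.

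\emph{Leftmost inequality.} By Proposition \ref{prop:Lipschitz}, $\widehat{\mathcal{V}}_k^p$ is convex and locally Lipschitz on the convex set $\mathcal{B}_k$ (Lemma \ref{lemma:convexity_reachset_valfunc}). For a convex function, any Clarke subgradient at an interior point is an ordinary subgradient. The delicate case is a boundary point $\vx_k^i$, where we must show $\widehat{\mathcal{V}}_k^p(\vx)\geq\widehat{\mathcal{V}}_k^p(\vx_k^i)+\vg_k^{i\top}(\vx-\vx_k^i)$ for all $\vx\in\mathcal{B}_k$.

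The plan is to use the characterization of $\partial^\circ$ as the closed convex hull of limits of gradients at nearby differentiability points. At each such point $\vy$ near $\vx_k^i$ in $\mathcal{B}_k$, convexity gives $\widehat{\mathcal{V}}_k^p(\vx)\geq\widehat{\mathcal{V}}_k^p(\vy)+\nabla\widehat{\mathcal{V}}_k^p(\vy)^\top(\vx-\vy)$. Passing to the limit uses continuity of $\widehat{\mathcal{V}}_k^p$ at $\vx_k^i$ and the uniform gradient bound $C_k^V$. Convex combinations preserve the inequality. If $\mathcal{B}_k$ has empty interior, we work in its affine hull, i.e. relative interior and relative Clarke subgradient. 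Then every cut in \eqref{eq:support_plane} lies below $\widehat{\mathcal{V}}_k^p$, and taking the maximum gives $\mathcal{V}_k^p\leq\widehat{\mathcal{V}}_k^p$. I expect this boundary/degenerate-domain step to be the main obstacle. It is where the Clarke-subgradient choice, rather than an arbitrary subgradient, must be justified.

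\emph{Middle inequality, by backward induction on $k$.} The base case $k=N$ holds by \eqref{eq:approximation_terminal_value}. For $k=N-1$, $\widehat{\mathcal{V}}_{N-1}^p$ and $V_{N-1}^*$ coincide, since both minimize the same expression over $\mathcal{C}_{N-1}$.

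Assume $\widehat{\mathcal{V}}_{k+1}^p\leq V_{k+1}^*$ on $\mathcal{B}_{k+1}$. Combining this with the leftmost inequality at stage $k+1$ gives $\mathcal{V}_{k+1}^p\leq V_{k+1}^*$ on $\mathcal{B}_{k+1}$. For any $u\in\mathcal{C}_k(\vx_k)$ we have $\vA_k\vx_k+\vb_k u\in\mathcal{B}_{k+1}$, so $L_k(\vx_k,u)+\mathcal{V}_{k+1}^p(\vA_k\vx_k+\vb_k u)\leq L_k(\vx_k,u)+V_{k+1}^*(\vA_k\vx_k+\vb_k u)$. Minimizing over the common set $\mathcal{C}_k(\vx_k)$ and using \eqref{eq:approximate_value_function_backward} and \eqref{eq:bellman_value_function} yields $\widehat{\mathcal{V}}_k^p\leq V_k^*$.

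The minima exist because $\mathcal{C}_k(\vx_k)$ is a nonempty compact interval (Lemma \ref{lemma:convexity_reachset_valfunc}) and both objectives are continuous by Proposition \ref{prop:Lipschitz}. The construction order within loop $p$ (from $k=N-1$ down to $0$) matches this induction, so all three inequalities hold for every $k$ and $p$.
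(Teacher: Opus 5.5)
Your proposal is correct and follows the same three-part route as the paper. The cut inequality comes from convexity of $\widehat{\mathcal{V}}_k^p$ with Clarke subgradients, the bound $V_k^*\leq J_k$ follows from the definition, and the middle inequality is a backward induction comparing \eqref{eq:approximate_value_function_backward} with the Bellman recursion for $V_k^*$ over the common admissible set $\mathcal{C}_k(\vx_k)$. Your gradient-limit argument at boundary cut points, which uses continuity of $\widehat{\mathcal{V}}_k^p$ relative to $\mathcal{B}_k$ from Proposition \ref{prop:Lipschitz}, supplies a justification that the paper only asserts. Your induction step also correctly passes through $\mathcal{V}_{k+1}^p\leq V_{k+1}^*$, where the paper's text has an index slip.
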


In \eqref{eq:bound_of_value_function}, the piecewise linear approximation $\mathcal{V}_k^p$ and the real cost $J_k$ are both computationally tractable, whereas the real value function $V_k^*$ and the one-step approximation $\widehat{\mathcal{V}}_k^p$ are intractable. As illustrated in Fig. \ref{fig:ardp_one_loop}(b), the key theoretical mechanism is a tractable upper-lower bound certificate: the real cost $J_k$ provides an upper bound on the optimal value, while the cut-based approximate $\mathcal{V}_k^p$ provides a lower bound.

\begin{theorem}\label{thm:near_optimality_gap}
    For the $p$-th loop of RDDP, consider the feasible trial trajectory $\bx_0^{p*}=(\vx_k^{p*})_{k=0}^N$ and $\bu_0^{p*}=(u_k^{p*})_{k=0}^{N-1}$ generated in the forward pass. Denote the globally optimal value of COPP \eqref{eq:op_discretize} by $J^*$. Then, the optimality gap is bounded by
    \begin{equation}
        0\leq J_0(\bx_0^{p*},\bu_0^{p*}) - J^* \leq \varepsilon_\text{gap}(\bx_0^{p*},\bu_0^{p*}),
    \end{equation}
    where the \textit{DDP optimality gap} $\varepsilon_\text{gap}$ is defined as
    \begin{equation}\label{eq:dual_gap}
        \varepsilon_\text{gap}(\bx_0^{p*},\bu_0^{p*})=J_0(\bx_0^{p*},\bu_0^{p*}) - \mathcal{V}_0^p(\vx_0^{p*}).
    \end{equation}
\end{theorem}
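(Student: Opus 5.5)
The proof will have three steps: establish that the trial trajectory is feasible, derive the lower bound $J_0(\bx_0^{p*},\bu_0^{p*})\geq J^*$, and derive the upper bound from the chain of inequalities in Proposition \ref{prop:bound_of_value_function}.

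\textbf{Feasibility.} We argue by induction along the forward pass \eqref{eq:forward_refine}.
\begin{itemize}
    \item Base case: by \eqref{eq:refine_x0}, $\vx_0^{p*}\in\mathcal{B}_0\cap\mathcal{X}_0$.
    \item Inductive step: suppose $\vx_k^{p*}\in\mathcal{B}_k$. The recursion \eqref{eq:recursion_backward} gives $\vf_k(\vx_k^{p*})\leq\vzero$ and $\mathcal{C}_k(\vx_k^{p*})\neq\varnothing$.
    \item Existence of the minimizer in \eqref{eq:refine_uk}: by Lemma \ref{lemma:convexity_reachset_valfunc}, $\mathcal{C}_k(\vx_k^{p*})$ is nonempty and compact. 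The objective $L_k(\vx_k^{p*},\cdot)+\mathcal{V}_{k+1}^p(\vA_k\vx_k^{p*}+\vb_k\,\cdot)$ is continuous by Assumption \ref{assum:feasibility_smoothness} and Proposition \ref{prop:Lipschitz}. Hence a minimizer $u_k^{p*}$ exists.
    \item Propagation: by definition \eqref{eq:admissable_control_set}, $\vh_k(\vx_k^{p*},u_k^{p*})\leq\vzero$ and $\vx_{k+1}^{p*}\in\mathcal{B}_{k+1}$.
    \item Termination: at $k=N$ we get $\vx_N^{p*}\in\mathcal{B}_N=\mathcal{X}_\text{f}$.
\end{itemize}
So $(\bx_0^{p*},\bu_0^{p*})$ satisfies every constraint of COPP \eqref{eq:op_discretize}, and therefore $J_0(\bx_0^{p*},\bu_0^{p*})\geq J^*$. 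This is the left inequality.

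\textbf{Upper bound.} Let $(\bx^*,\bu^*)$ be a global optimum. It exists because the feasible set is compact by Assumption \ref{assum:feasibility_smoothness} and the objective is continuous. Every feasible trajectory starts in $\mathcal{B}_0\cap\mathcal{X}_0$, so $\vx_0^*\in\mathcal{B}_0\cap\mathcal{X}_0$. Then:
\begin{itemize}
    \item By the minimality in \eqref{eq:refine_x0}, $\mathcal{V}_0^p(\vx_0^{p*})\leq\mathcal{V}_0^p(\vx_0^*)$.
    \item Proposition \ref{prop:bound_of_value_function} at $k=0$, applied to the feasible solution $(\bx^*,\bu^*)$, gives $\mathcal{V}_0^p(\vx_0^*)\leq V_0^*(\vx_0^*)\leq J_0(\bx^*,\bu^*)=J^*$.
\end{itemize}
Chaining these, $\mathcal{V}_0^p(\vx_0^{p*})\leq J^*$. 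Subtracting both sides from $J_0(\bx_0^{p*},\bu_0^{p*})$ yields $J_0(\bx_0^{p*},\bu_0^{p*})-J^*\leq J_0(\bx_0^{p*},\bu_0^{p*})-\mathcal{V}_0^p(\vx_0^{p*})=\varepsilon_\text{gap}$, which is the right inequality.

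\textbf{Main obstacle.} The only delicate point is the well-posedness of the forward pass, namely that the argmin sets in \eqref{eq:refine_x0} and \eqref{eq:refine_uk} are nonempty. We handle it with compactness and continuity as above. For \eqref{eq:refine_x0}, note additionally that $\mathcal{B}_0\cap\mathcal{X}_0$ is nonempty because COPP is feasible, and compact by Lemma \ref{lemma:convexity_reachset_valfunc}. The rest is a direct application of Proposition \ref{prop:bound_of_value_function}.
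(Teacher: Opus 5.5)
Your proof is correct and follows the same route as the paper: minimality of $\vx_0^{p*}$ in \eqref{eq:refine_x0} gives $\mathcal{V}_0^p(\vx_0^{p*})\leq\mathcal{V}_0^p(\vx_0^*)$, and Proposition \ref{prop:bound_of_value_function} at a global optimum gives $\mathcal{V}_0^p(\vx_0^*)\leq J^*$. The only difference is that you explicitly check feasibility of the trial trajectory and existence of the minimizers in \eqref{eq:forward_refine} by induction and compactness, which the paper treats as evident.
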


Theorem \ref{thm:near_optimality_gap} provides a stopping criterion for Algorithm \ref{alg:ardp_copp}, i.e., the DDP optimality gap $\varepsilon_\text{gap}$ is smaller than a user-defined threshold. Finally, the global optimality convergence of RDDP is presented as follows.

\begin{theorem}\label{thm:adrp_optimal}
    The historical best cost of RDDP converges to the globally optimal value. Specifically,
    \begin{equation}\label{eq:convergence_condition}
        \liminf_{p \to \infty} \varepsilon_\text{gap}(\bx_0^{p*},\bu_0^{p*}) = 0,
    \end{equation}
    where notations in Theorem \ref{thm:near_optimality_gap} are applied. Furthermore, each accumulation point of the trial sequence $\{(\bx_0^{p*},\bu_0^{p*})\}_{p=0}^\infty$ generated by RDDP is a globally optimal solution of COPP \eqref{eq:op_discretize}. In other words, any convergent subsequence, the existence of which is guaranteed by the compactness of the feasible set, converges to a globally optimal solution of COPP \eqref{eq:op_discretize}.
\end{theorem}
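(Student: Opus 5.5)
The proof follows the classical DDP convergence argument (Benders/Pereira–Pinto style). The only change is that the state domain at each stage is $\mathcal{B}_k$ rather than a recourse-complete set. By Proposition \ref{prop:bound_of_value_function} and Theorem \ref{thm:near_optimality_gap}, $\varepsilon_\text{gap}\geq0$. So it suffices to find a subsequence along which $J_0(\bx_0^{p*},\bu_0^{p*})-\mathcal{V}_0^p(\vx_0^{p*})\to0$.

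Compactness comes first. Every trial trajectory is feasible, since $\vx_k^{p*}\in\mathcal{B}_k$ by construction. The feasible set is compact by Assumption \ref{assum:feasibility_smoothness} and Lemma \ref{lemma:convexity_reachset_valfunc}. Hence there is a subsequence $p_l$ with $(\bx_0^{p_l*},\bu_0^{p_l*})\to(\bar\bx,\bar\bu)$, and $(\bar\bx,\bar\bu)$ is feasible.

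Monotonicity is next. Because $\mathcal{S}_k^{p+1}\supset\mathcal{S}_k^p$, backward induction on $k$ starting from $\mathcal{V}_N^p=\Phi$ gives that $\mathcal{V}_k^p$ and $\widehat{\mathcal{V}}_k^p$ are nondecreasing in $p$ and bounded above by $V_k^*$. Proposition \ref{prop:Lipschitz} supplies the local Lipschitz constants uniformly in $p$. Cover each compact $\mathcal{B}_k$ by finitely many balls $\mathbb{B}(\vx,\varepsilon_k^V(\vx))$ to obtain a uniform Lipschitz constant on $\mathcal{B}_k$. Then, by Arzel\`a–Ascoli plus monotonicity, $\mathcal{V}_k^p\to\mathcal{V}_k^\infty$ uniformly on $\mathcal{B}_k$.

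The key step is to show by induction on $k$ that along the subsequence
\[
\lim_{l\to\infty}\Big(\widehat{\mathcal{V}}_k^{p_l}(\vx_k^{p_l*})-\mathcal{V}_k^{p_l}(\vx_k^{p_l*})\Big)=0 .
\]
Since $\vx_k^{p_{l-1}*}\in\mathcal{S}_k^{p_l}$, the cut at that point gives
\[
\mathcal{V}_k^{p_l}(\vx_k^{p_l*})\geq\widehat{\mathcal{V}}_k^{p_l}(\vx_k^{p_{l-1}*})-C\|\vx_k^{p_l*}-\vx_k^{p_{l-1}*}\|,
\]
where the Clarke subgradients are bounded by Proposition \ref{prop:Lipschitz}. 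The Clarke choice matters exactly here, because ordinary subgradients at boundary points of $\mathcal{B}_k$ may be unbounded. Combining this with the uniform Lipschitz continuity of $\widehat{\mathcal{V}}_k^{p_l}$ and the fact that consecutive subsequence points converge to the same limit yields the claim. If needed, one can pass to a further subsequence so that $\vx_k^{p_{l-1}*}$ and $\vx_k^{p_l*}$ share the limit; alternatively, one can use $\mathcal{S}_k^{p_l}\ni\vx_k^{p_j*}$ for every $j<l$.

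Telescoping is next. By \eqref{eq:widehat_V_L_V_refine},
\[
J_0(\bx_0^{p*},\bu_0^{p*})-\mathcal{V}_0^p(\vx_0^{p*})=\sum_{k=0}^{N-1}\Big[\widehat{\mathcal{V}}_k^p(\vx_k^{p*})-\mathcal{V}_k^p(\vx_k^{p*})\Big],
\]
because the terminal term vanishes ($\mathcal{V}_N^p=\Phi$). Each bracket tends to zero along $p_l$, which gives \eqref{eq:convergence_condition}.

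For global optimality of the accumulation point, the limit satisfies $J_0(\bar\bx,\bar\bu)=\lim\mathcal{V}_0^{p_l}(\vx_0^{p_l*})$ by continuity of $J_0$. Also $\mathcal{V}_0^{p_l}(\vx_0^{p_l*})\leq\mathcal{V}_0^{p_l}(\vx_0^*)\leq J^*$ by \eqref{eq:refine_x0} and Proposition \ref{prop:bound_of_value_function}. Hence $J_0(\bar\bx,\bar\bu)\leq J^*$, and feasibility forces equality. The statement says every accumulation point, so the gap-vanishing argument has to be run along an arbitrary convergent subsequence, not only one chosen for convenience.

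The main obstacle is the step bounding the gap at $\vx_k^{p_l*}$ by the cut from an earlier trial point. This requires the Lipschitz estimates to be uniform over all of $\mathcal{B}_k$, including its possibly degenerate boundary. It also requires control over the pair $(\vx_k^{p_l*},\vx_k^{p_{l-1}*})$ for an arbitrary subsequence. I would handle the latter by noting that, for any $\delta>0$, infinitely many indices $j<l$ in the subsequence lie within $\delta$ of the limit, and all of them are in $\mathcal{S}_k^{p_l}$.
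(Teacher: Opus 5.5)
Your argument is correct and is essentially the paper's proof: along an arbitrary convergent subsequence, earlier trial points belong to $\mathcal{S}_k^{p}$, so the cuts there are tight, and the $p$-uniform local Lipschitz bound of Proposition \ref{prop:Lipschitz} at the limit point forces each term $\widehat{\mathcal{V}}_k^{p}(\vx_k^{p*})-\mathcal{V}_k^{p}(\vx_k^{p*})$ of the gap telescoped via \eqref{eq:widehat_V_L_V_refine} to vanish, after which optimality of the limit follows exactly as you say. The only flaw is the monotonicity/Arzel\`a--Ascoli paragraph, which is false in general and is never used: $\mathcal{V}_k^{p}$ need not be nondecreasing in $p$, because each loop regenerates the cuts at old samples as tangents of the new $\widehat{\mathcal{V}}_k^{p}$ (possibly with different Clarke subgradients), and these can lie below the old cuts away from their base points; since neither $\mathcal{V}_k^\infty$ nor a Lipschitz constant uniform over all of $\mathcal{B}_k$ enters the argument (control near the limit point suffices), simply delete that paragraph.
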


Based on Theorem \ref{thm:adrp_optimal}, the proposed RDDP can be regarded as a principled alternative to conventional CO methods for COPP. Note that RDDP attains the same global optimality while replacing the large-scale optimization problem \eqref{eq:op_discretize} by a sequence of low-dimensional ones, thereby enabling substantially more efficient computation, as verified in Section \ref{subsec:numerical_experiments}.

\begin{remark}
    Proposition \ref{prop:bound_of_value_function} and Theorem \ref{thm:adrp_optimal} provide a key insight which enables adaptive refinement in RDDP. Specifically, we define a tractable one-step DDP optimality gap as follows:
    \begin{equation}\label{eq:dual_gap_k}
        \varepsilon_\text{gap}^k(\vx_k^{p*}, u_k^{p*}) = L_k(\vx_k^{p*}, u_k^{p*}) + \mathcal{V}_{k+1}^p(\vx_{k+1}^{p*}) - \mathcal{V}_k^p(\vx_k^{p*}).
    \end{equation}
    If $\varepsilon_\text{gap}^k(\vx_k^{p*}, u_k^{p*})$ is large, then the current cut-based lower approximation $\mathcal{V}_k^p$ is loose around $\vx_k^{p*}$. We can add more cut-generation points around $\vx_k^{p*}$ to refine the approximation for the $(p+1)$-th loop, which can accelerate convergence.
\end{remark}

\begin{remark}
    The global optimality of RDDP is guaranteed by an assumption that the backward reachable set $\mathcal{B}_k$ is exactly computed by \eqref{eq:recursion_backward} for each $k$. As will be introduced in Section \ref{subsec:ardp_copp3}, only conservatively approximate $\mathcal{B}_k$ is available for COPP3 \cite{dio2025time}. As will be proved in Section \ref{subsec:KKT_GOPP_Conservative}, the global optimality of the original COPP problem \eqref{eq:op_discretize} can also be attained by iteratively updating reachable sets when conservative inner approximations of reachable sets are used.
\end{remark}

\section{RDDP for GOPP with KKT Convergence}\label{sec:RDDP_GOPP}

In Section \ref{sec:RDDP_COPP}, we have proposed RDDP, where global optimality of COPP is guaranteed by Theorem \ref{thm:adrp_optimal}. In practice, some objectives and constraints in OPP \eqref{eq:op_discretize} are non-convex, such as the jerk constraint \eqref{eq:jerk_constraint} in OPP3. However, the standard DDP is primarily designed for convex multistage problems, whereas cut-based approximations are not guaranteed to be global lower bounds of value functions without convexity.

Under OPP-specific conditions, this section extends RDDP to GOPP, i.e., the non-convex Problem \ref{problem:general}, through embedding RDDP into the SCP framework. Since the theoretical results in Section \ref{subsec:optimality_adrp_copp} are established under the assumption that backward reachable sets are exactly computed, we first analyze the KKT convergence of RDDP for GOPP under such assumption in Section \ref{subsec:KKT_GOPP_exact_backward}. In Section \ref{subsec:KKT_GOPP_Conservative}, we extend the results to the case where only conservative approximation of the backward reachable set is available, which is highly common in OPP3. All proofs are given in Appendix \ref{app:proofs}.

\subsection{KKT Convergence with Exact Reachable Sets}\label{subsec:KKT_GOPP_exact_backward}

Leveraging the global optimality of RDDP in COPP, this section directly applies DC decomposition to the non-convex functions in GOPP. For each iteration $q$ of DC decomposition, we can construct a convex optimization problem by convexifying the non-convex functions, which can be solved by RDDP. In this section, we assume that the backward reachable sets of each convexified problem are exactly computed by \eqref{eq:recursion_backward}.

According to assumptions in Problem \ref{problem:general}, all functions $L_k$, $\Phi$, $\vf_k$, $\vh_k$, and boundary sets $\mathcal{X}_0$, $\mathcal{X}_\text{f}$ in GOPP \eqref{eq:op_discretize} are DC-decomposable. For the $q$-th iteration of DC decomposition, $q\geq0$, assume that the reference point $\bx_0^{(q)}=\Big(\vx_k^{(q)}\Big)_{k=0}^{N}$, $\bu_0^{(q)}=\Big(u_k^{(q)}\Big)_{k=0}^{N-1}$ is given. Then, we construct the following convex optimization subproblem by convexifying the non-convex functions and sets at $\bx_0^{(q)},\bu_0^{(q)}$:
\begin{subequations}\label{eq:op_discretize_convexity}
    \begin{align}
        \min_{\vx_k,u_k} \quad &\sum_{k=0}^{N-1} \overline{L}_k\Big(\vx_k, u_k; \vx_k^{(q)},u_k^{(q)}\Big) + \overline{\Phi}\Big(\vx_N;\vx_N^{(q)}\Big) \label{eq:op_discretize_convexity_objective}\\
        \st \quad & \vx_{k+1} = \vA_k\vx_k + \vb_k u_k,\label{eq:op_discretize_convexity_dynamics} \\
        &\overline{\vf}_k\Big(\vx_k;\vx_k^{(q)}\Big) \leq \vzero, \label{eq:op_discretize_convexity_state_constraints}\\
        &\overline{\vh}_k\Big(\vx_k, u_k; \vx_k^{(q)}, u_k^{(q)}\Big) \leq \vzero,\label{eq:op_discretize_convexity_mix_constraints}\\
        &\vx_0\in\overline{\mathcal{X}}_0\Big(\vx_0^{(q)}\Big), \,\,\vx_N\in\overline{\mathcal{X}}_\text{f}\Big(\vx_N^{(q)}\Big),\label{eq:op_discretize_convexity_boundary_constraints}
    \end{align}
\end{subequations}
where the convexified functions and sets are defined in Definition \ref{def:DC_decomposable}. Note that the convexified feasible set of \eqref{eq:op_discretize_convexity} is a subset of the original feasible set of GOPP \eqref{eq:op_discretize}, as shown in Fig. \ref{fig:ardp_gopp}. Apply RDDP to solve the convex subproblem \eqref{eq:op_discretize_convexity} and obtain an optimal solution $\bx_0^{(q+1)}=\Big(\vx_k^{(q+1)}\Big)_{k=0}^{N}$, $\bu_0^{(q+1)}=\Big(u_k^{(q+1)}\Big)_{k=0}^{N-1}$ which is used as the reference point for the next iteration of DC decomposition. Then, the following theorem guarantees the KKT convergence of the above DC decomposition process.

\begin{figure}[!t]
    \centering
    \includegraphics[width=0.85\columnwidth]{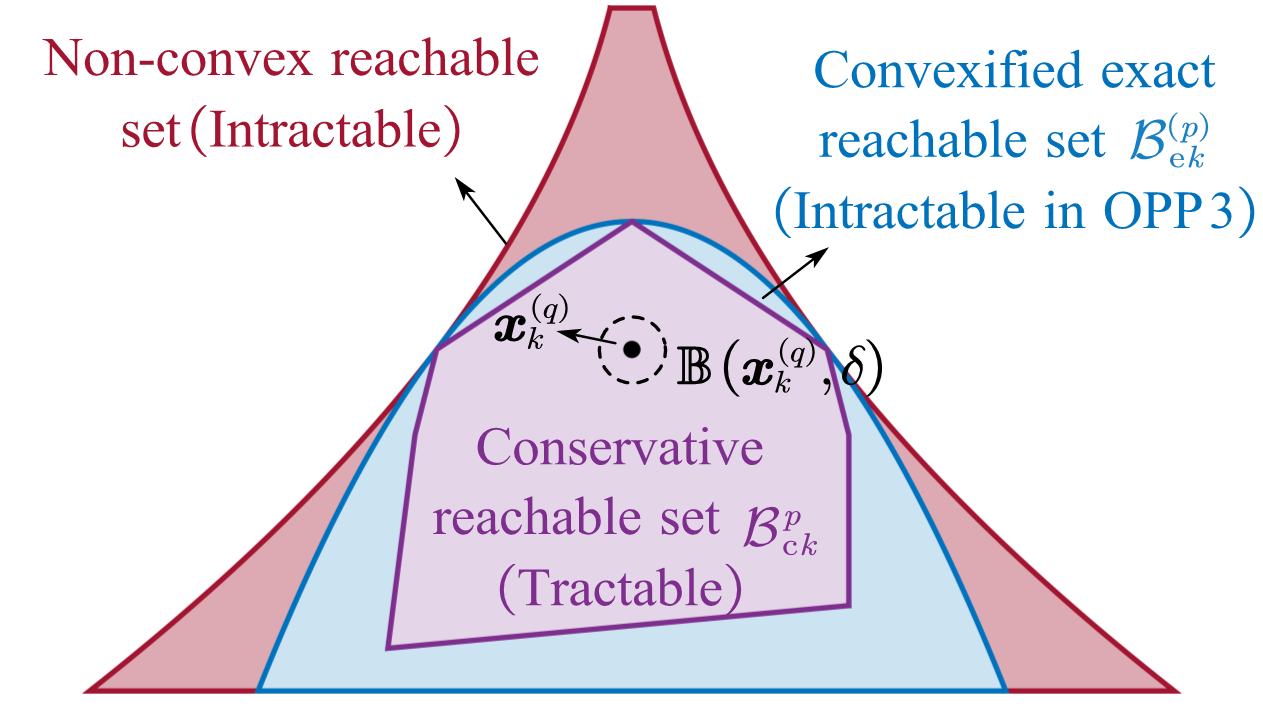}
    \caption{Relation between the backward reachable sets of the convexified problem and the original GOPP.}
    \label{fig:ardp_gopp}
\end{figure}

\begin{theorem}\label{thm:KKT_exact_reachable_set}
    Consider GOPP \eqref{eq:op_discretize} and the above DC decomposition process. An initial reference point $\bx_0^{(0)}=\Big(\vx_k^{(0)}\Big)_{k=0}^{N}$, $\bu_0^{(0)}=\Big(u_k^{(0)}\Big)_{k=0}^{N-1}$ is given. Assume that the convexified problem \eqref{eq:op_discretize_convexity} for $q=0$ is feasible. Then, $\forall q\geq0$, the convexified problem \eqref{eq:op_discretize_convexity} remains feasible. The real cost of the solution sequence $\Big\{\bx_0^{(q)},\bu_0^{(q)}\Big\}_{q=0}^\infty$ is non-increasing, i.e.,
    \begin{equation}
        J_0\Big(\bx_0^{(q+1)},\bu_0^{(q+1)}\Big) \leq J_0\Big(\bx_0^{(q)},\bu_0^{(q)}\Big),
    \end{equation}
    where $J_0\Big(\bx_0^{(q)},\bu_0^{(q)}\Big)=\sum_{k=0}^{N-1} L_k\Big(\vx_k^{(q)}, u_k^{(q)}\Big)+\Phi\Big(\vx_N^{(q)}\Big)$.

    Furthermore, assume that $\forall q\geq0$, the convexified problem \eqref{eq:op_discretize_convexity} satisfies certain constraint qualification (CQ) \cite{boyd2004convex}, such as the Slater's condition and linear independence constraint qualification (LICQ). Then, each accumulation point $(\bx_0^*,\bu_0^*)$ of the solution sequence $\Big\{\Big(\bx_0^{(q)},\bu_0^{(q)}\Big)\Big\}_{q=0}^\infty$ is a KKT solution of the original GOPP \eqref{eq:op_discretize} if $(\bx_0^*,\bu_0^*)$ also satisfies the CQ condition of GOPP \eqref{eq:op_discretize}.
\end{theorem}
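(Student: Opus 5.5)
The proof follows the standard convex–concave procedure (DC programming) argument, with Theorem \ref{thm:adrp_optimal} supplying that each convexified subproblem \eqref{eq:op_discretize_convexity} is solved to global optimality by RDDP.

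\textbf{Feasibility and monotonicity.} The first step is two elementary facts from Definition \ref{def:DC_decomposable}. Since $f_2$ is convex, $f_2(\vx)\geq f_2(\vx^*)+\nabla f_2(\vx^*)^\top(\vx-\vx^*)$. Hence $\overline{f}(\vx;\vx^*)\geq f(\vx)$ for all $\vx$, and $\overline{f}(\vx^*;\vx^*)=f(\vx^*)$.

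Applied to constraints, the first fact gives that every convexified constraint set lies inside the original one. So the solution $(\bx_0^{(q+1)},\bu_0^{(q+1)})$ of subproblem $q$ is feasible for GOPP \eqref{eq:op_discretize}.

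Next, $(\bx_0^{(q+1)},\bu_0^{(q+1)})$ is also feasible for subproblem $q+1$. Its convexified constraint values at itself equal the original constraint values, which are $\leq\vzero$ by the previous step. The equality constraints (dynamics and the affine parts of $\mathcal{X}_0,\mathcal{X}_\text{f}$) are unchanged. Feasibility of subproblem $q=0$ then propagates to all $q$ by induction.

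For monotonicity, write $\overline{J}(\cdot;q)$ for the objective \eqref{eq:op_discretize_convexity_objective}. Then
\begin{equation*}
J_0\big(\bx_0^{(q+1)},\bu_0^{(q+1)}\big)\leq \overline{J}\big(\bx_0^{(q+1)},\bu_0^{(q+1)};q\big)\leq \overline{J}\big(\bx_0^{(q)},\bu_0^{(q)};q\big)=J_0\big(\bx_0^{(q)},\bu_0^{(q)}\big).
\end{equation*}
The first inequality is the majorization $\overline{J}\geq J_0$. The second holds because the reference point is feasible for subproblem $q$ (for $q\geq1$ by the argument above) and $(\bx_0^{(q+1)},\bu_0^{(q+1)})$ is its global minimizer, by Theorem \ref{thm:adrp_optimal}. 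The equality is the tightness $\overline{f}(\vx^*;\vx^*)=f(\vx^*)$. For $q=0$ the initial point may be infeasible for subproblem $0$, so I would state monotonicity from $q\geq1$, or assume feasibility of the initial point.

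\textbf{KKT at accumulation points.} By compactness (Assumption \ref{assum:feasibility_smoothness}), the iterates lie in a compact set, and $J_0$ is bounded below and non-increasing, so $J_0(\bx_0^{(q)},\bu_0^{(q)})$ converges. Take a subsequence with $(\bx_0^{(q_j)},\bu_0^{(q_j)})\to(\bx_0^*,\bu_0^*)$.

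Under the CQ, each subproblem solution satisfies KKT conditions with multipliers $\lambda^{(q)}$. The stationarity condition involves $\nabla(\text{first convex part})(\bx_0^{(q+1)})-\nabla(\text{second convex part})(\bx_0^{(q)})$, together with complementary slackness of the convexified constraints.

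The main obstacle is this limit passage, because the gradients are evaluated at two different iterates $q$ and $q+1$. The plan is as follows.
\begin{enumerate}
\item Show that successive iterates approach each other along the subsequence. Since $\overline{J}(\cdot;q)$ is convex and minimized at the new iterate, $\overline{J}(x^{(q)};q)-\overline{J}(x^{(q+1)};q)\geq 0$. This gap is bounded above by $J_0(x^{(q)})-J_0(x^{(q+1)})\to 0$. I would argue the limit of $x^{(q_j+1)}$ (along a further subsequence) is also a minimizer of the limiting convexified problem at $x^*$. Together with the fixed-point property (the limiting problem has $x^*$ feasible with value $J_0(x^*)$, equal to the limit of the costs), this gives that $x^*$ itself solves the convexified problem at $x^*$. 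Strict convexity of a component is not needed for this; it only requires continuity of the convexified data in the reference point, which follows from the $\mathcal{C}^1$ assumption.
\item Because $x^*$ minimizes the convex problem convexified at $x^*$, and the CQ holds there, KKT conditions hold for that convex problem at $x^*$.
\item Since the convexified functions at $x^*$ have the same values and gradients at $x^*$ as the original ones, these KKT conditions coincide with the KKT conditions of GOPP \eqref{eq:op_discretize}.
\end{enumerate}
Boundedness of the multipliers is not needed in this route, because the KKT system is invoked only at the limit point, where the CQ of GOPP is assumed.
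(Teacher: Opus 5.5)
Your argument is, in substance, the one the paper uses. The paper's proof just invokes the constrained-CCCP convergence theorem of \cite{sriperumbudur2009convergence}: a Zangwill-type argument fed by compactness from Assumption~\ref{assum:feasibility_smoothness} and exact subproblem optimality from Theorem~\ref{thm:adrp_optimal}. You are unpacking that theorem by hand. Your feasibility propagation and majorization chain are correct. So is your caveat that monotonicity at $q=0$ needs a GOPP-feasible initial point, which the statement glosses over.

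\textbf{The gap is in your step~1.} This step is exactly the closedness of the algorithmic map that the paper delegates to the citation. You claim that a limit $y^*$ of $x^{(q_j+1)}$ minimizes the subproblem convexified at $x^*$, and that this ``only requires continuity of the convexified data in the reference point.''

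Continuity gives only that $y^*$ is \emph{feasible} for the limiting subproblem. To compare $y^*$ with an arbitrary $z$ feasible at $x^*$, you need points $z_j\to z$ that are feasible for the subproblems at $x^{(q_j)}$, i.e., inner semicontinuity of the feasible-set map. That fails without a constraint qualification.

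For instance, minimize $x$ subject to $-x^2\leq 0$, $-1\leq x\leq 1$, with $f_1=0$, $f_2=x^2$.
\begin{itemize}
\item At a reference $a\in(0,1]$ the convexified constraint is $x\geq a/2$.
\item From $x^{(0)}=1$ the iterates are $x^{(q)}=2^{-q}\to 0$. Every subproblem satisfies Slater's condition, and all data are $\mathcal{C}^1$.
\item Yet at $a=0$ the convexified constraint reads $0\leq 0$, so the limiting subproblem is minimized only by $-1$.
\item Hence $0$ is neither a minimizer of its own convexification nor a KKT point.
\end{itemize}

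\textbf{The repair uses a hypothesis you already carry,} but you must invoke it here and not only in step~2. Since $\overline{f}(\cdot;x^*)$ and $f$ share value and gradient at $x^*$, LICQ (or MFCQ) of GOPP at $x^*$ yields MFCQ of the convex subproblem at $x^*$. That in turn produces a Slater point $\hat z$ for it; the affine equalities do not change with $q$.

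For any $z$ feasible at $x^*$, the points $z_t=(1-t)z+t\hat z$ with $t\in(0,1]$ strictly satisfy the convexified inequalities at $x^*$. By joint continuity they are therefore feasible at $x^{(q_j)}$ for large $j$. Let $j\to\infty$ in $\overline{J}(x^{(q_j+1)};x^{(q_j)})\leq\overline{J}(z_t;x^{(q_j)})$ and then $t\to 0$. This gives optimality of $y^*$, after which your value-matching argument and steps~2--3 go through.

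The example also shows that the CQs at the iterates cannot substitute for the CQ at the limit. Incidentally, step~1 neither shows nor needs that successive iterates approach each other; what you actually use is that the model decrease tends to zero.
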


\begin{algorithm}[!t]
    \caption{RDDP for GOPP.}
    \label{alg:ardp_gopp}
    \begin{algorithmic}[1]
        \REQUIRE GOPP \eqref{eq:op_discretize}, convergence criterion of DC decomposition, initial reference point $\Big(\bx_0^{(0)}, \bu_0^{(0)}\Big)$, and inputs of Algorithm \ref{alg:ardp_copp};
        \ENSURE KKT trajectory $\{\vx_k^*\}_{k=0}^{N}, \{u_k^*\}_{k=0}^{N-1}$;
        \STATE Initialize $q\leftarrow 0$;
        \REPEAT
            \STATE Convexify GOPP \eqref{eq:op_discretize} at $\bx_0^{(q)},\bu_0^{(q)}$ to construct the convex optimization problem \eqref{eq:op_discretize_convexity} or \eqref{eq:op_discretize_convexity_conservative}, where the computed backward reachable sets $\mathcal{B}_{\text{c}k}^{(q)}$ should satisfy the condition \eqref{eq:KKT_approximate_reachable_set_condition} \textbf{if} $q\geq1$;
            \STATE Apply Algorithm \ref{alg:ardp_copp} to solve problems \eqref{eq:op_discretize_convexity} or \eqref{eq:op_discretize_convexity_conservative}, and obtain the optimal solution $\Big(\bx_0^{(q+1)}, \bu_0^{(q+1)}\Big)$;
            \STATE Update $q\leftarrow q+1$;
        \UNTIL{the convergence criterion is satisfied;}
        \STATE \textbf{return} the last solution $\Big(\bx_0^{(q)}, \bu_0^{(q)}\Big)$.
    \end{algorithmic}
\end{algorithm}

\subsection{KKT Convergence with Conservative Reachable Sets}\label{subsec:KKT_GOPP_Conservative}

In practice, exact backward reachable sets of each convexified problem may be unavailable. For example, backward reachable sets of COPP3 are typically approximated by a polytope with limited number of edges \cite{dio2025time}. This section proves that the KKT convergence of the DC decomposition process is still guaranteed with conservative reachable sets. The approximate backward reachable set can be regarded as some additional state constraints $\vr^{(q)}_k(\vx_k)\leq \vzero$ in the convexified subproblem \eqref{eq:op_discretize_convexity} for each iteration $q$. The considered convexified subproblem for the $q$-th iteration is as follows:
\begin{subequations}\label{eq:op_discretize_convexity_conservative}
    \begin{align}
        \min_{\vx_k,u_k} \quad &\sum_{k=0}^{N-1} \overline{L}_k\Big(\vx_k, u_k; \vx_k^{(q)},u_k^{(q)}\Big) + \overline{\Phi}\Big(\vx_N;\vx_N^{(q)}\Big)\label{eq:op_discretize_convexity_conservative_objective}\\
        \st \quad & \vr^{(q)}_k(\vx_k)\leq \vzero,\,\,k=0,\ldots,N-1,\label{eq:op_discretize_convexity_conservative_additional_constraints}\\
        & \text{Constraints \eqref{eq:op_discretize_convexity_dynamics}--\eqref{eq:op_discretize_convexity_boundary_constraints}}.\label{eq:op_discretize_convexity_conservative_other_constraints}
    \end{align}
\end{subequations}
Then, an optimal solution $\Big(\bx_0^{(q+1)},\bu_0^{(q+1)}\Big)$ of the conservative problem \eqref{eq:op_discretize_convexity_conservative} can be obtained by RDDP. The following theorem guarantees the KKT convergence of RDDP, where only an additional local exactness condition is required that the extra state constraints $\vr^{(q)}_k(\vx_k)\leq \vzero$ should not influence the feasibility of the convexified problem in a neighborhood of the reference point $\Big(\bx_0^{(q)},\bu_0^{(q)}\Big)$, as shown in Fig. \ref{fig:ardp_gopp}. The RDDP process of GOPP is summarized in Algorithm \ref{alg:ardp_gopp}.

\begin{theorem}\label{thm:KKT_approximate_reachable_set}
    Consider GOPP \eqref{eq:op_discretize} and the above DC decomposition process. An initial reference point $\Big(\bx_0^{(0)},\bu_0^{(0)}\Big)$ is given. For the $q$-th iteration, denote the exact and conservative backward reachable sets as $\mathcal{B}_{\text{e}k}^{(q)}$ and $\mathcal{B}_{\text{c}k}^{(q)}$, respectively. In other words, $\mathcal{B}_{\text{e}k}^{(q)}$ is the intractable theoretical backward reachable set determined by constraints \eqref{eq:op_discretize_convexity_conservative_other_constraints}, whereas $\mathcal{B}_{\text{c}k}^{(q)}\subset\mathcal{B}_{\text{e}k}^{(q)}$ is the tractable backward reachable set determined by constraints \eqref{eq:op_discretize_convexity_conservative_additional_constraints} and \eqref{eq:op_discretize_convexity_conservative_other_constraints}. Assume that $\exists \delta>0$ independent of $q$, s.t.
    \begin{equation}\label{eq:KKT_approximate_reachable_set_condition}
        \forall q\geq1,\,\mathcal{B}_{\text{c}k}^{(q)}\cap\mathbb{B}\Big(\vx_k^{(q)},\delta\Big)=\mathcal{B}_{\text{e}k}^{(q)}\cap\mathbb{B}\Big(\vx_k^{(q)},\delta\Big).
    \end{equation}
    Then, the feasibility, non-increasing real cost of the solution sequence, and the KKT convergence of the RDDP process are guaranteed under the same conditions as in Theorem \ref{thm:KKT_exact_reachable_set}.
\end{theorem}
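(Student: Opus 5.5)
The plan is to reduce the three claims to the mechanism of Theorem \ref{thm:KKT_exact_reachable_set}. We show that the conservative subproblem \eqref{eq:op_discretize_convexity_conservative} keeps every ingredient that proof uses: feasibility of the reference point, the majorization/descent inequality, and local optimality at accumulation points. The local exactness condition \eqref{eq:KKT_approximate_reachable_set_condition} will let us replace \eqref{eq:op_discretize_convexity_conservative} by the exact convexified problem \eqref{eq:op_discretize_convexity} near the current iterate.

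\textbf{Feasibility.} We argue by induction on $q$. The iterate $(\bx_0^{(q)},\bu_0^{(q)})$ for $q\geq1$ is feasible for the $(q-1)$-th conservative subproblem. Since that feasible set is contained in the original GOPP feasible set, it is feasible for GOPP. By Definition \ref{def:DC_decomposable}, the linearization of $f_2$ at the reference point is exact there, so $\overline{f}(\vx^{(q)};\vx^{(q)})=f(\vx^{(q)})\le0$. Hence the reference trajectory satisfies all constraints of \eqref{eq:op_discretize_convexity}, and therefore $\vx_k^{(q)}\in\mathcal{B}_{\text{e}k}^{(q)}$ for every $k$. By \eqref{eq:KKT_approximate_reachable_set_condition}, $\vx_k^{(q)}\in\mathcal{B}_{\text{c}k}^{(q)}$, so the reference trajectory also satisfies $\vr_k^{(q)}\le\vzero$ and \eqref{eq:op_discretize_convexity_conservative} is feasible. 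The base case $q=0$ uses the feasibility hypothesis inherited from Theorem \ref{thm:KKT_exact_reachable_set}.

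\textbf{Monotonicity.} By convexity of $f_2$, the convexified objective majorizes the true objective, i.e. $\overline{L}_k\ge L_k$, with equality at the reference point. Since $(\bx_0^{(q+1)},\bu_0^{(q+1)})$ is optimal for \eqref{eq:op_discretize_convexity_conservative} (Theorem \ref{thm:adrp_optimal}) and the reference point is feasible for it, we obtain
$J_0(\bx_0^{(q+1)},\bu_0^{(q+1)})\le\overline{J}(\bx_0^{(q+1)})\le\overline{J}(\bx_0^{(q)})=J_0(\bx_0^{(q)},\bu_0^{(q)})$.

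\textbf{KKT convergence.} This is the main obstacle. The additional constraints $\vr_k^{(q)}$ could in principle contribute multipliers that pollute the KKT system of GOPP, so we localize. Fix an accumulation point $(\bx_0^*,\bu_0^*)$ along a subsequence $q_j$. Compactness and monotonicity give $J_0^{(q)}\downarrow\bar J$. The sandwich above then forces $\overline{J}(\bx^{(q_j+1)};\bx^{(q_j)})-\overline{J}(\bx^{(q_j)};\bx^{(q_j)})\to0$.

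Next we use that the conservative feasible set coincides with the exact convexified feasible set inside a $\delta$-ball around the reference state, with $\delta$ uniform in $q$. The state-wise condition on each $\mathcal{B}_k$ transfers to trajectories: any trajectory of \eqref{eq:op_discretize_convexity} whose states all lie within $\delta$ of the reference states has $\vx_k\in\mathcal{B}_{\text{e}k}^{(q)}\cap\mathbb{B}(\vx_k^{(q)},\delta)=\mathcal{B}_{\text{c}k}^{(q)}\cap\mathbb{B}(\vx_k^{(q)},\delta)$. Hence $(\bx^{(q+1)},\bu^{(q+1)})$ is a local minimizer of the exact convex problem \eqref{eq:op_discretize_convexity} restricted to a uniform neighbourhood.

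By convexity, a local minimizer of a convex problem is global, provided the iterate stays within $\delta$, or even if it does not: a convex combination with the reference point lands inside the ball, and local optimality there implies global optimality. Thus $(\bx^{(q+1)},\bu^{(q+1)})$ solves \eqref{eq:op_discretize_convexity} exactly. Equivalently, under the CQ the constraints $\vr_k^{(q)}$ are inactive in the KKT system, i.e. they carry zero multipliers.

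With this, the sequence is literally the SCP sequence of Theorem \ref{thm:KKT_exact_reachable_set} at every step from $q=1$ on. We then invoke its limiting argument: bounded multipliers by CQ, passing to the limit along $q_j$, and continuity of the $\nabla f_2$ linearizations. This yields the KKT conditions of GOPP at $(\bx_0^*,\bu_0^*)$.
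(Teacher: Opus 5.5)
Your feasibility and monotonicity arguments are correct. The KKT step, however, has a genuine gap. Condition \eqref{eq:KKT_approximate_reachable_set_condition} only makes $\mathcal{F}_{\text{c}}^{(q)}$ and $\mathcal{F}_{\text{e}}^{(q)}$ (the feasible sets of \eqref{eq:op_discretize_convexity_conservative} and \eqref{eq:op_discretize_convexity}) coincide on a ball of radius $\delta'$ around the \emph{reference} point $\bx_0^{(q)}$. The new iterate $\bx_0^{(q+1)}$ can lie outside that ball, or on its boundary, so nothing makes it a local minimizer of \eqref{eq:op_discretize_convexity}.

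The convex-combination rescue does not work either. Suppose $\mathbf{z}\in\mathcal{F}_{\text{e}}^{(q)}$ satisfies $\overline{J}(\mathbf{z})<\overline{J}(\bx_0^{(q+1)})$. The points $(1-t)\bx_0^{(q)}+t\mathbf{z}$ do lie in $\mathcal{F}_{\text{c}}^{(q)}$ for small $t$. But their cost is only guaranteed to be below $\overline{J}(\bx_0^{(q)})$, not below $\overline{J}(\bx_0^{(q+1)})$, so there is no contradiction whenever the step actually decreased the cost.

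The claim itself is false. Take $\mathcal{F}_{\text{e}}=[0,10]$, $\mathcal{F}_{\text{c}}=[0,2]$, reference point $0$, $\delta=1$, and objective $-x$. The two sets agree on $\mathbb{B}(0,1)$. Yet the conservative minimizer is $2$, where the extra constraint is active with a nonzero multiplier, while the exact minimizer is $10$. So your assertion that the constraints $\vr_k^{(q)}$ carry zero multipliers fails. The iterates are not those of the exact SCP in Theorem \ref{thm:KKT_exact_reachable_set}, and you cannot simply inherit its limit argument.

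What local exactness plus convexity does give is a statement at the reference point, which is how the paper's Zangwill-type argument uses it. First, $\bx_0^{(q)}$ solves the conservative problem if and only if it solves the exact convexified one. Quantitatively, write $J_0^{(q)}:=J_0(\bx_0^{(q)},\bu_0^{(q)})$ and $\overline{J}=\overline{J}(\cdot;\bx_0^{(q)})$, and set $D^{(q)}:=\overline{J}(\bx_0^{(q)})-\min\{\overline{J}(\mathbf{z}):\mathbf{z}\in\mathcal{F}_{\text{e}}^{(q)}\cap\mathbb{B}(\bx_0^{(q)},\delta')\}$. The inclusion $\mathcal{F}_{\text{e}}^{(q)}\cap\mathbb{B}(\bx_0^{(q)},\delta')\subset\mathcal{F}_{\text{c}}^{(q)}$ and your own sandwich give
\[
J_0^{(q)}-J_0^{(q+1)}\geq\overline{J}\bigl(\bx_0^{(q)}\bigr)-\overline{J}\bigl(\bx_0^{(q+1)}\bigr)\geq D^{(q)}\geq 0,
\]
so $D^{(q)}\to0$.

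Now pass to the limit along the subsequence $q_j$. Use three ingredients: the $q$-independence of $\delta'$, continuity of the linearizations in the reference point, and the CQ (a Slater point lets you approximate exact-feasible points near $\bx_0^*$ by exact-feasible points of the $q_j$-th subproblems). Together they show that $\bx_0^*$ minimizes $\overline{J}(\cdot;\bx_0^*)$ over the exact convexified feasible set intersected with $\mathbb{B}(\bx_0^*,\delta'/2)$. By convexity it then minimizes over the whole set. Since the convexified data match the original values and gradients at the reference point, this is the KKT system of GOPP. This descent-and-closedness mechanism is what the uniform $\delta$ is for, and it never requires each conservative step to coincide with an exact one.
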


Theorem \ref{thm:KKT_approximate_reachable_set} can be reduced to COPP3 as follows. Specifically, the DC decomposition process is reduced to a local update of the approximate reachable sets around the reference point, while all functions and sets are not convexified.

\begin{corollary}\label{cor:global_optimal_approximate_reachable_set}
    Consider COPP where only conservative approximation of the backward reachable sets is available. Following the same conditions and process in Theorem \ref{thm:KKT_approximate_reachable_set}, RDDP converges to a globally optimal solution.
\end{corollary}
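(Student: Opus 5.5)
We specialize Theorem \ref{thm:KKT_approximate_reachable_set} to the case where COPP is already convex. Then the DC decomposition is trivial: $f_2\equiv 0$ for every function, so the convexified functions and sets in Definition \ref{def:DC_decomposable} coincide with the originals. Problem \eqref{eq:op_discretize_convexity} is therefore exactly COPP \eqref{eq:op_discretize} for every $q$, and its exact backward reachable sets $\mathcal{B}_{\text{e}k}^{(q)}=\mathcal{B}_k$ do not depend on $q$. The only quantity that changes with $q$ is the conservative set $\mathcal{B}_{\text{c}k}^{(q)}\subset\mathcal{B}_k$, encoded by the constraints $\vr_k^{(q)}\leq\vzero$ in \eqref{eq:op_discretize_convexity_conservative}. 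So the iteration reduces to solving COPP restricted to inner reachable sets that are re-built around the previous solution. Theorem \ref{thm:KKT_approximate_reachable_set} already gives feasibility, a non-increasing real cost $J_0$, and convergence of accumulation points to KKT points of \eqref{eq:op_discretize}.

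The plan has three steps.

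\textbf{Step 1: existence of an accumulation point.} By Assumption \ref{assum:feasibility_smoothness} the feasible set is compact, so the sequence $\{(\bx_0^{(q)},\bu_0^{(q)})\}$ has an accumulation point $(\bx_0^*,\bu_0^*)$. Continuity of $J_0$ and monotonicity of $J_0^{(q)}$ then give convergence of the whole cost sequence to $J_0(\bx_0^*,\bu_0^*)$.

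\textbf{Step 2: local optimality at the limit.} This is the main obstacle, since the KKT conclusion needs a CQ that I would rather avoid. I would argue via condition \eqref{eq:KKT_approximate_reachable_set_condition} instead. Near $\bx_0^{(q)}$, the restricted problem coincides with COPP inside the ball $\mathbb{B}(\vx_k^{(q)},\delta)$. Two facts combine here: the restricted problem is convex, because $\mathcal{B}_{\text{c}k}^{(q)}$ is a convex polytope, and its solution $\bx_0^{(q+1)}$ is globally optimal for it by Theorem \ref{thm:adrp_optimal}.

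Now take any COPP-feasible point $\by$ within distance $\delta/2$ of $\bx_0^{(q+1)}$, where $q$ is large and $\bx_0^{(q)}$, $\bx_0^{(q+1)}$ are both close to $\bx_0^*$. Then $\by$ lies in $\mathbb{B}(\vx_k^{(q)},\delta)$, so by \eqref{eq:KKT_approximate_reachable_set_condition} it is feasible for the restricted problem. Hence $J_0(\bx_0^{(q+1)})\leq J_0(\by)$. Passing to the limit along the subsequence shows that $\bx_0^*$ minimizes $J_0$ over COPP-feasible points in a $\delta/2$-neighborhood.

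The delicate point is ensuring that consecutive iterates approach $\bx_0^*$ together. If this fails, I would fall back on the KKT route, assuming the CQ exactly as in Theorem \ref{thm:KKT_approximate_reachable_set}.

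\textbf{Step 3: local optimum implies global optimum.} COPP is convex: its objective and feasible set are convex by Problem \ref{problem:convex} and Lemma \ref{lemma:convexity_reachset_valfunc}. So a local minimizer, or equivalently a KKT point under the CQ, is a global minimizer. Therefore every accumulation point is globally optimal, and the monotone cost sequence converges to $J^*$.
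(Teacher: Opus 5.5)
Your fallback is exactly the paper's proof. The paper invokes Theorem~\ref{thm:KKT_approximate_reachable_set} to get KKT accumulation points under the CQ, and then notes that KKT points of a convex problem are global minimizers. Your primary route is genuinely different: it tries to avoid the CQ, Zangwill's theorem and the $\mathcal{C}^1$ DC framework, using only local exactness, exact solution of each restricted subproblem, and convexity. But Step~2 as written has the hole you flag. Requiring $\mathbf{y}$ to lie within $\delta/2$ of $\bx_0^{(q+1)}$ forces $\bx_0^{(q_i+1)}$ to stay near $\bx_0^{(q_i)}$ along the subsequence, and nothing guarantees this. For example, if COPP has several optimal solutions, the subproblem solver may alternate between them.

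That closeness is not needed. Center the neighbourhood at the reference point, as condition \eqref{eq:KKT_approximate_reachable_set_condition} does, and use the cost convergence you already have from Step~1. Let $(\mathbf{y},\mathbf{v})$ be COPP-feasible with $\|\vy_k-\vx_k^*\|<\delta/2$ for all $k$. Pick $i$ with $q_i\geq1$ and $\|\vx_k^{(q_i)}-\vx_k^*\|<\delta/2$ for all $k$. Then $\vy_k\in\mathcal{B}_k\cap\mathbb{B}(\vx_k^{(q_i)},\delta)=\mathcal{B}_{\text{c}k}^{(q_i)}\cap\mathbb{B}(\vx_k^{(q_i)},\delta)$, so $(\mathbf{y},\mathbf{v})$ is feasible for the $q_i$-th restricted problem. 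Hence
\[
J_0(\bx_0^*,\bu_0^*)=\lim_{q\to\infty}J_0\big(\bx_0^{(q)},\bu_0^{(q)}\big)\leq J_0\big(\bx_0^{(q_i+1)},\bu_0^{(q_i+1)}\big)\leq J_0(\mathbf{y},\mathbf{v}),
\]
and your Step~3 finishes with no CQ at all.

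Convexity even gives a quantitative version. Write $J_0^{(q)}$ for $J_0(\bx_0^{(q)},\bu_0^{(q)})$, let $(\bx^\star,\bu^\star)$ be optimal, let $D$ be the largest diameter of the sets $\mathcal{B}_k$, and set $t=\min\{1,\delta/(2D)\}$. Inserting $(1-t)(\bx_0^{(q)},\bu_0^{(q)})+t(\bx^\star,\bu^\star)$ into the $q$-th restricted problem yields $J_0^{(q+1)}-J^*\leq(1-t)\big(J_0^{(q)}-J^*\big)$ for $q\geq1$.

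So the repaired direct argument is stronger than the paper's and better matched to COPP's merely locally Lipschitz data. The paper's route is a one-line reuse of Theorem~\ref{thm:KKT_approximate_reachable_set}, at the price of inheriting its CQ and smoothness hypotheses.
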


\section{RDDP for OPP2 and OPP3}\label{sec:ardp_opp2_opp3}

This section develops efficient instantiations of RDDP for OPP2 and OPP3. Specifically, this section focuses on COPP since GOPP can be solved by the process in Section \ref{sec:RDDP_GOPP}.

\subsection{RDDP for COPP2}\label{subsec:ardp_copp2}

For COPP2, the backward reachable sets can be computed exactly as intervals by existing RA techniques. The one-dimensional state is denoted by $\vx_k=x_k\triangleq x_{k,1}=\frac12\dot{s}^2(s_k)\in\R_+$. Then, the dynamic equation \eqref{eq:op_discretize_dynamics} implies that $x_{k+1}=x_k+u_k\Delta_k$.

\subsubsection{Bidirectional Reachable Set}\label{sub2sec:ardp_opp2_reachable_set}
We can directly apply the RA approach in \cite{pham2018new} for COPP2. To improve the sampling efficiency of RDDP, we choose to compute the bidirectional reachable set instead of the backward reachable set. Note that such modification does not influence the optimality and convergence of RDDP, as the bidirectional reachable set is the subset of the backward reachable set of those states that can be reached from the initial state set $\mathcal{X}_0$. The backward reachable set $\mathcal{B}_{k}'=[\underbar{x}_{k}',\bar{x}_{k}']$ is computed via BP in \eqref{eq:recursion_backward}, as shown in Fig. \ref{fig:reachable_set_copp2}(a). Then, as shown in Fig. \ref{fig:reachable_set_copp2}(b), the bidirectional reachable set $\mathcal{B}_k=[\underbar{x}_k,\bar{x}_k]$ is computed via FP by solving the following two-dimensional convex optimization problem:
\begin{subequations}
    \begin{align}
        \minmax_{x_k,u_{k-1}}\quad&x_k\\
        \st\quad&\vh_{k-1}(x_k-u_{k-1}\Delta_{k-1},u_{k-1})\leq\vzero,\\
        &\underbar{x}_{k-1}\leq x_k-u_{k-1}\Delta_{k-1}\leq \bar{x}_{k-1},\\
        &\underbar{x}_k'\leq x_k\leq \bar{x}_k',
    \end{align}
\end{subequations}
where $\mathcal{B}_{0}=\mathcal{B}_{0}'\cap\mathcal{X}_0$.

\begin{figure}[!]
    \centering
    \includegraphics[width=0.87\columnwidth]{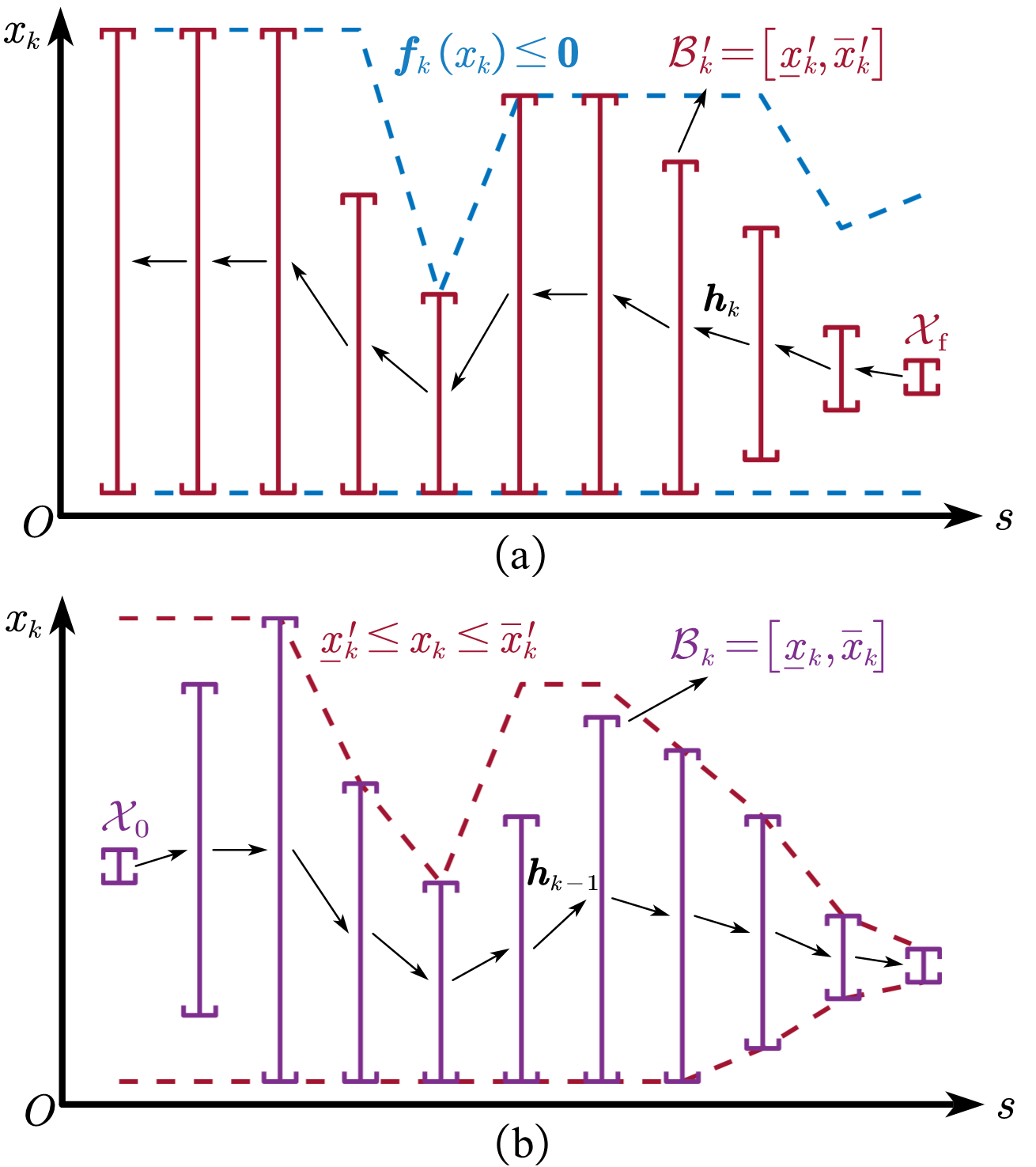}
    \caption{Computation of reachable sets for COPP2. (a) Backward propagation of the backward reachable set $\mathcal{B}_k'$. (b) Forward propagation of the bidirectional reachable set $\mathcal{B}_k$.}
    \label{fig:reachable_set_copp2}
\end{figure}

\begin{figure*}[!]
    \centering
    \includegraphics[width=\textwidth]{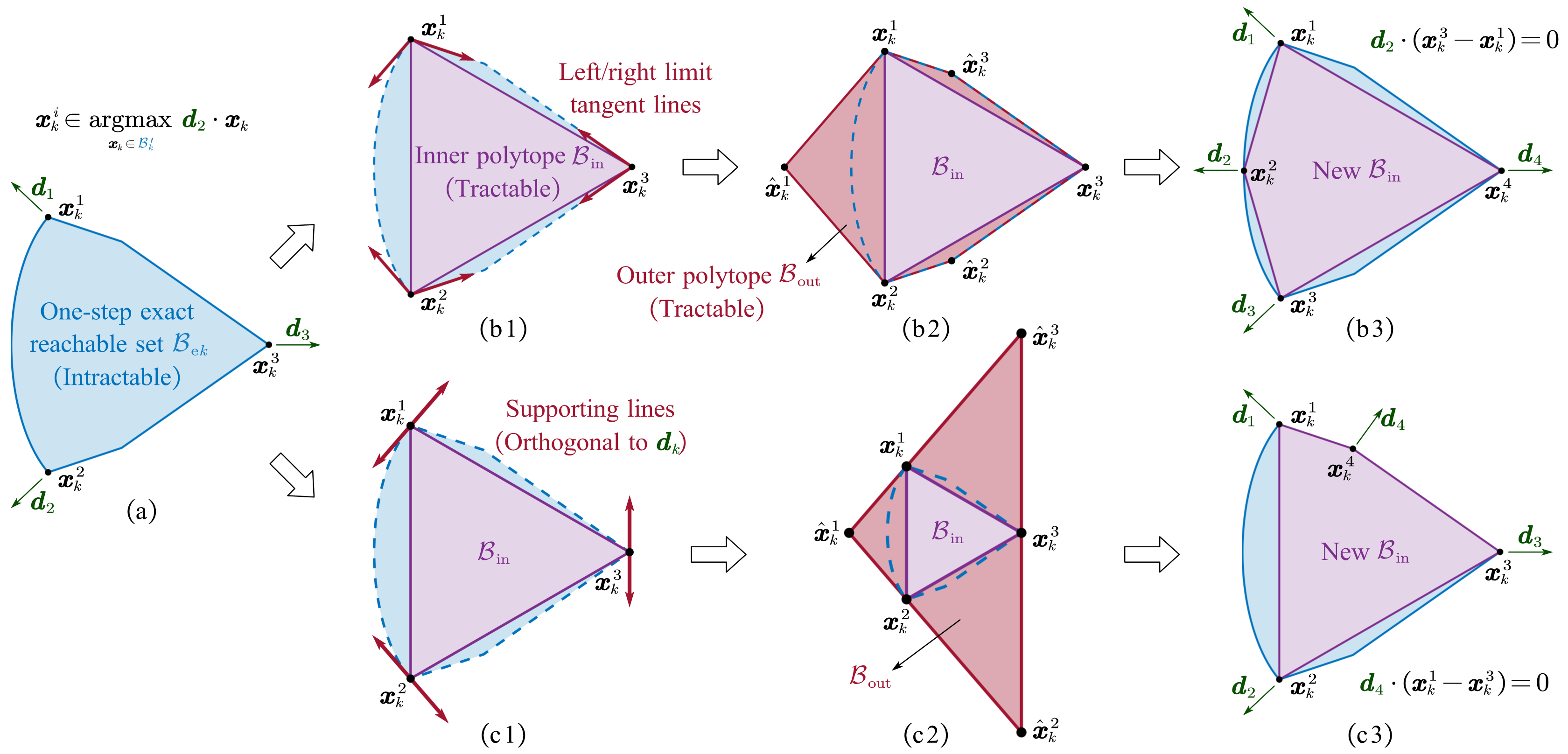}
    \caption{Computation of backward reachable sets for COPP3 based on projection. (a) The one-step exact reachable set $\mathcal{B}_{\text{e}k}$ with sampled vertices. (b) The proposed extension-based method in Section \ref{sub2sec:backward_set_copp3_extend}. (c) The baseline in \cite{dio2025time}. The proposed method provides a more accurate estimation of the residual area.}
    \label{fig:reachable_set_copp3_extend}
\end{figure*}

\subsubsection{Backward Cut-Generation Pass}\label{sub2sec:ardp_opp2_subgradient}

The backward cut-generation pass of RDDP for COPP2 follows the standard procedure in Section \ref{sub2sec:backward_cut_pass}. The admissible control set $\mathcal{C}_k(x_k)=[\underbar{u}_k(x_k),\bar{u}_k(x_k)]$ can be solved by the one-dimensional convex optimization problem \eqref{eq:admissable_control_set}, where the active constraints can be used to compute the directional derivatives $\frac{\mathrm{d}\underbar{u}_k}{\mathrm{d}x_k}(x_k)$ and $\frac{\mathrm{d}\bar{u}_k}{\mathrm{d}x_k}(x_k)$. Then, the cut-based approximation $\mathcal{V}_k^p$ is constructed by the one-dimensional convex problem \eqref{eq:approximate_value_function_backward}.

\subsubsection{Forward Trial Pass} In the forward pass, approximation value functions $\mathcal{V}_k^p$ have been computed for all $k$. Following the standard process in Section \ref{sub2sec:refinement}, the trial trajectory is generated by solving a sequence of one-dimensional convex optimization problems \eqref{eq:forward_refine}.

Since the reachable set $\mathcal{B}_k$ is exactly computed for COPP2, the global optimality convergence of the trial trajectories with respect to (w.r.t.) the original COPP \eqref{eq:op_discretize} is guaranteed by Theorem \ref{thm:adrp_optimal}.

\subsection{RDDP for COPP3}\label{subsec:ardp_copp3}

In this section, COPP3 typically denotes the convexified subproblem of GOPP3 with non-convex 3rd-order constraints. The backward cut-generation pass and forward trial pass follow the standard process in Section \ref{subsec:rddp_process}, where a sequence of one- and two-dimensional convex optimization problems are solved. Therefore, this section focuses on the computation of backward reachable sets for COPP3. In fact, a challenge for RA-based COPP3 approaches is the computation and representation of the backward reachable sets, which are generally convex but intractable sets in $\R^2$. According to \eqref{eq:backward_reachable_definition}, the exact $\mathcal{B}_k$ is the projection of a high-dimensional convex set represented by numerous convex constraints. Specifically, as the BP process goes backward and $k$ decreases, the number of constraints for representing $\mathcal{B}_k$ grows. As a result, the number of active constraints at the boundary of $\mathcal{B}_k$ grows uncontrollably, which leads to the intractability of $\mathcal{B}_k$.

Leveraging the recent work \cite{dio2025time} and our established Theorem \ref{thm:KKT_approximate_reachable_set}, we can compute the backward reachable sets by a conservative polytope with limited number of edges, while guaranteeing the global optimality for COPP3 or the KKT convergence for GOPP3 under local exactness conditions. Specifically, this section presents two methods for computing conservative backward reachable sets for efficiency under different conditions. The improved reachable-set computation can also be plugged into TOPP3-RA solvers, leading to the TOPP3-RA-IRC variant evaluated in this paper.

In contrast to COPP2, we recommend to use backward reachable sets instead of bidirectional reachable sets in COPP3, as the approximation of bidirectional reachable sets would significantly increase the conservativeness and hinder the optimality. In COPP3, the two-dimensional state is denoted by $\vx_k=(x_k,y_k)\triangleq (x_{k,1},x_{k,2})=(\frac12\dot{s}^2(s_k),\ddot{s}(s_k))\in\R_+\times\R$. Then, the dynamic equation \eqref{eq:op_discretize_dynamics} implies $x_{k+1}=x_k+y_k\Delta_k+\frac12u_k\Delta_k^2$ and $y_{k+1}=y_k+u_k\Delta_k$. In this section, we still denote the approximate backward reachable set by $\mathcal{B}_k$ for brevity.

\subsubsection{Projection-Based Backward Reachable Set}\label{sub2sec:backward_set_copp3_extend}

\begin{figure*}[!]
    \centering
    \includegraphics[width=\textwidth]{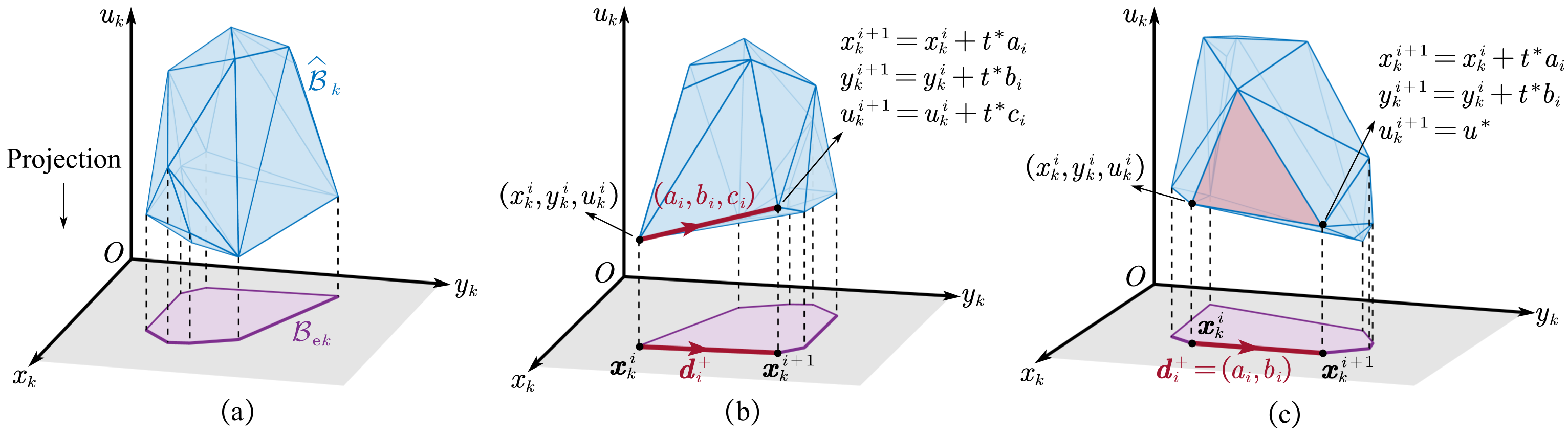}
    \caption{Computation of the one-step exact backward reachable set $\mathcal{B}_{\text{e}k}$ for COPP3 based on searching. (a) The geometric projection relation between $\mathcal{B}_{\text{e}k}$ and $\widehat{\mathcal{B}}_k$. (b) Searching along a line. (c) Searching along a plane.}
    \label{fig:reachable_set_copp3_clip}
\end{figure*}

This section introduces an improved method of \cite[Section \RomanNum{3}-B2]{dio2025time}. Let $\mathcal{B}_{\text{c}N}=\mathcal{X}_\text{f}$. Given a convex set $\mathcal{B}_{\text{c}\,k+1}$, the one-step exact backward reachable set $\mathcal{B}_{\text{e}k}$ is the projection of the following three-dimensional convex set:
\begin{align}
    \widehat{\mathcal{B}}_k=&\Big\{(x_k, y_k, u_k)\mid \vf_k(x_k,y_k)\leq\vzero,\,\vh_k(x_k,y_k,u_k)\leq\vzero, \notag\\
    &\Big(x_k+y_k\Delta_k+\frac12u_k\Delta_k^2,\, y_k+u_k\Delta_k\Big)\in\mathcal{B}_{\text{c}\,k+1}\Big\}.\label{eq:reachable_set_copp3_3d_before_projection}
\end{align}
To project a boundary point of $\mathcal{B}_{\text{e}k}$ to a vertex of the conservative reachable set $\mathcal{B}_{\text{c}k}$, we can solve the following three-dimensional convex optimization problem:
\begin{equation}\label{eq:reachable_set_copp3_extend}
    (x_k^i, y_k^i, u_k^i)\in\argmax_{(x_k,y_k,u_k)\in\widehat{\mathcal{B}}_k} a_{i}x_k+ b_{i}y_k,
\end{equation}
where $(a_i,b_i)\not=\vzero$ is the sampling direction. 

As shown in Fig. \ref{fig:reachable_set_copp3_extend}(a), the exact $\mathcal{B}_{\text{e}k}$ can be obtained by sampling infinitely many directions, which is intractable. In practice, an upper bound of the number of vertices $P_k$ is given. Then, we can sample no more than $P_k$ vertices of $\mathcal{B}_{\text{e}k}$ with different sampling directions $(a_i,b_i)$. As a subset of $\mathcal{B}_{\text{e}k}$, the convex hull of the sampled vertices is computed as the conservative backward reachable set $\mathcal{B}_{\text{c}k}$.

The remaining issue is how to choose the sampling directions. Inspired by \cite{dio2025time}, the area of $\mathcal{B}_{\text{c}k}$ is heuristically maximized to minimize the conservativeness. Assume that there has been $i<P_k$ vertices sampled, denoted by $\{\vx_k^j=(x_k^j, y_k^j)\}_{j=1}^i$. As shown in Fig. \ref{fig:reachable_set_copp3_extend}(b1), denote $\mathcal{B}_\text{in}$ by the convex hull of the sampled vertices, which is an inner convex polytopic approximation of $\mathcal{B}_{\text{e}k}$. For each sampled vertex $\vx_k^j$, we can compute the left and right tangent lines of $\mathcal{B}_{\text{e}k}$ at $\vx_k^j$ by determining the active constraints of the above optimization problem. As shown in Fig. \ref{fig:reachable_set_copp3_extend}(b2), denote $\mathcal{B}_{\text{out}}$ by the region enclosed by these tangent lines, which is an outer convex polytopic approximation of $\mathcal{B}_{\text{e}k}$. Hence,
\begin{equation}
    \mathcal{B}_\text{in}\subseteq\mathcal{B}_{\text{e}k}\subseteq\mathcal{B}_\text{out}.
\end{equation}
Although $\mathcal{B}_{\text{e}k}$ is intractable, the polytopes $\mathcal{B}_\text{in}$ and $\mathcal{B}_\text{out}$ are both tractable. Denote the vertices of $\mathcal{B}_\text{out}$ by $\{\hat{\vx}_k^j\}_{j=1}^i$. Then, $\Setminus{\mathcal{B}_\text{out}}{\mathcal{B}_\text{in}}$ is the union of triangles formed by $\hat{\vx}_k^j$, $\vx_k^j$, and $\vx_k^{j+1}$, where $\vx_k^{i+1}$ refers to $\vx_k^1$. Similar to \cite{dio2025time}, we select the triangle with the largest area, i.e., the maximal residual area. Then, the next vertex is sampled with the outer normal direction of the corresponding segment $\vx_k^j$--$\vx_k^{j+1}$. The above process is repeated until $P_k$ different vertices are sampled or $\mathcal{B}_{\text{e}k}$ is exactly obtained. Finally, let $\mathcal{B}_{\text{c}k}$ be the convex hull of all sampled vertices.

\begin{remark}
    The difference between the proposed method and \cite{dio2025time} lies in the construction of the outer polytope $\mathcal{B}_\text{out}$. In \cite{dio2025time}, the outer polytope is constructed by the support lines of \eqref{eq:reachable_set_copp3_extend}, i.e., the orthogonal lines to $(a_i,b_i)$, as shown in Fig. \ref{fig:reachable_set_copp3_extend}(c). In contrast, our method constructs $\mathcal{B}_\text{out}$ by the two-sided tangent lines at each sampled vertex, which provides a more accurate estimation of the residual area. As shown in Fig. \ref{fig:reachable_set_copp3_extend}(b)-(c), the proposed method empirically achieves a tighter approximation of $\mathcal{B}_{\text{e}k}$ than \cite{dio2025time}, which contributes to a better optimality of the generated trajectory.
\end{remark}

\subsubsection{Searching-Based Backward Reachable Set}\label{sub2sec:backward_set_copp3_clip}

This section proposes a more efficient method for computing the backward reachable set for the case where all constraints in COPP3 \eqref{eq:op_discretize} are linear. Note that the linearity of constraints is a common case in practice \cite[Appendix A.2]{wang2026online}. By Lemma \ref{lemma:convexity_reachset_valfunc}, the exact backward reachable set is a convex polytope in $\R^2$. As shown in Fig. \ref{fig:reachable_set_copp3_clip}(a), the one-step exact reachable set $\mathcal{B}_{\text{e}k}$ is the projection of a three-dimensional convex polytope $\widehat{\mathcal{B}}_k$ in \eqref{eq:reachable_set_copp3_3d_before_projection}.

The key idea is to directly compute the exact $\mathcal{B}_{\text{e}k}$ by iterative vertex searching. Specifically, the first vertex $\vx_k^1$ of $\mathcal{B}_{\text{e}k}$ is obtained through the three-dimensional linear programming (LP) \eqref{eq:reachable_set_copp3_extend} with an arbitrary sampling direction. Assume that the last sampled three-dimensional vertex is $(x_k^i, y_k^i, u_k^i)$ with left and right tangent directions $\vd_{i}^-$ and $\vd_i^+$. Then, we can search for the next vertex along the right tangent direction $\vd_i^+$ which corresponds to either an edge or a plane of $\widehat{\mathcal{B}}_k$. As shown in Fig. \ref{fig:reachable_set_copp3_clip}, a one- or two-dimensional LP is solved to obtain the next vertex for the two cases, respectively.

Finally, the standard ear-clipping method is applied to the exact $\mathcal{B}_{\text{e}k}$ to reduce the number of vertices to be no more than $P_k$ for efficiency. Specifically, the ear-clipping method iteratively removes the triangle of the smallest area formed by three adjacent vertices until the number of vertices is no more than $P_k$. The resulting convex hull of the remaining vertices is the conservative backward reachable set $\mathcal{B}_{\text{c}k}$.

\begin{remark}
    Intuitively, the projection-based method in Section \ref{sub2sec:backward_set_copp3_extend} can save computational cost when $P_k$ is small, whereas the searching-based method in this section needs to fully solve all vertices of $\mathcal{B}_{\text{e}k}$. However, the searching-based method is much more efficient than the projection-based method in practice. This is because the projection-based method requires solving a three-dimensional convex optimization problem for each vertex. In contrast, the searching-based method only requires solving one-dimensional or two-dimensional LP for each vertex. For the searching-based method, most vertices can be solved by one-dimensional LP in the tested linear COPP3 instances, which significantly saves computational cost. In practice, we recommend the searching-and projection-based methods for linear and non-linear COPP3, respectively.
\end{remark}

\subsubsection{Guarantee of Feasibility and KKT Convergence} 

This section considers the feasibility and KKT guarantee for GOPP3. The remaining issue is the guarantee of local exactness condition \eqref{eq:KKT_approximate_reachable_set_condition} in Theorem \ref{thm:KKT_approximate_reachable_set} for the above two methods.

For the projection-based method in Section \ref{sub2sec:backward_set_copp3_extend}, assume that condition \eqref{eq:KKT_approximate_reachable_set_condition} holds for $\mathcal{B}_{\text{c}\,k+1}$. Since $\vx_{k+1}^{(q)}\in\mathcal{B}_{\text{c}\,k+1}$, we have $\vx_{k}^{(q)}\in\mathcal{B}_{\text{e}k}$. Therefore, the point $\vx_{k}^{(q)}$ can be merged into $\mathcal{B}_{\text{c}k}$ by computing the convex hull of $\mathcal{B}_{\text{c}k}$ and $\vx_{k}^{(q)}$. The resulting set $\mathcal{B}_{\text{c}k}'$ is still a convex subset of $\mathcal{B}_{\text{e}k}$. If $\vx_{k}^{(q)}$ lies in the interior of $\mathcal{B}_{\text{e}k}$, then $\mathcal{B}_{\text{c}k'}$ can be updated along some feasible directions, such as tangent directions, through one-dimensional optimization. Otherwise, $\vx_{k}^{(q)}$ may lie on a curve boundary of $\mathcal{B}_{\text{e}k}'$. In the latter case, $\mathcal{B}_{\text{c}k'}$ can be updated by searching several boundary points of $\mathcal{B}_{\text{e}k}$ along the curve boundary near $\vx_{k}^{(q)}$.

For the searching-based method in Section \ref{sub2sec:backward_set_copp3_clip} to compute $\mathcal{B}_{\text{c}k}$, condition \eqref{eq:KKT_approximate_reachable_set_condition} can be easily guaranteed. Specifically, when applying the ear-clipping method, triangles that intersect with $\mathbb{B}(\vx_k^{(q)},\delta)$ are retained and not clipped.

\begin{table*}[!t]
    \centering
    \caption{Performance comparison of different methods for TOPP2, GOPP2, TOPP3, and GOPP3 (mean $\pm$ std).}
    \label{tab:performance_comparison}
    \subcaptionbox{Time-Optimal Path Parameterization (TOPP)\label{tab:performance_comparison_topp}}{
    \resizebox{0.48\textwidth}{!}{
    \begin{tabular}{lllll}
        \toprule
        Problem & Method & SR (\%) & $T_\text{c}$ (ms) & $J_1$ \\
        \midrule
        \multirow{5}{*}{TOPP2} & TOPP2-RA & 100 & 0.433$\pm$0.031 & 30.399$\pm$1.887 \\
        & GOPP2-SOCP & 100 & 156.582$\pm$10.010 & 30.399$\pm$1.887 \\
        & GOPP2-SOCP${}^*$ & 100 & 61.616$\pm$1.550 & 30.649$\pm$1.892 \\
        & GOPP2-IDP & 100 & 13.029$\pm$0.362 & 30.412$\pm$1.886 \\
        & \textbf{GOPP2-RDDP} & \textbf{100} & \textbf{3.716$\pm$0.247} & \textbf{30.399$\pm$1.887} \\ \midrule
        \multirow{7}{*}{TOPP3} & TOPP3-RA & 100 & 24.299$\pm$1.401 & 30.768$\pm$1.886 \\ 
        & \textbf{TOPP3-RA-IRC} & 100 & 9.742$\pm$0.444 & 30.768$\pm$1.885 \\ 
        & GOPP3-SOCP & 100 & 345.298$\pm$22.141 & 30.760$\pm$1.886 \\
        & GOPP3-SOCP${}^*$ & 100 & 169.713$\pm$2.420 & 30.812$\pm$1.908 \\
        & GOPP3-IDP & 100 & 12262.139$\pm$850.531 & 32.003$\pm$1.766 \\
        & GOPP3-GDP & 62 & 69.471$\pm$3.349 & 32.699$\pm$1.773 \\
        & \textbf{GOPP3-RDDP} & \textbf{100} & \textbf{64.503$\pm$4.099} & \textbf{30.761$\pm$1.886} \\ \bottomrule
    \end{tabular}}
    }
    \subcaptionbox{General Optimal Path Parameterization (GOPP)\label{tab:performance_comparison_gopp}}{
    \resizebox{0.48\textwidth}{!}{
    \begin{tabular}{lllll}
        \toprule
        Problem & Method & SR (\%) & $T_\text{c}$ (ms) & $J_2$ \\
        \midrule
        \multirow{5}{*}{GOPP2} & TOPP2-RA & 100 & 0.431$\pm$0.037 & 58.239$\pm$2.531 \\
        & GOPP2-SOCP & 100 & 267.751$\pm$25.970 & 46.191$\pm$1.572 \\
        & GOPP2-SOCP${}^*$ & 100 & 68.429$\pm$2.327 & 46.911$\pm$3.057 \\
        & GOPP2-IDP & 100 & 14.292$\pm$0.525 & 46.261$\pm$1.566 \\
        & \textbf{GOPP2-RDDP} & \textbf{100} & \textbf{9.370$\pm$0.334} & \textbf{46.205$\pm$1.570} \\
        \midrule
        \multirow{7}{*}{GOPP3} & TOPP3-RA & 100 & 24.610$\pm$3.795 & 57.538$\pm$2.416 \\
        & \textbf{TOPP3-RA-IRC} & 100 & 9.813$\pm$0.641 & 57.529$\pm$2.418 \\ 
        & GOPP3-SOCP & 100 & 356.834$\pm$42.674 & 46.403$\pm$1.573 \\
        & GOPP3-SOCP${}^*$ & 100 & 175.139$\pm$3.258 & 48.562$\pm$4.006 \\
        & GOPP3-IDP & 100 & 11617.767$\pm$675.389 & 59.994$\pm$3.470 \\
        & GOPP3-GDP & 68 & 70.775$\pm$2.179 & 47.151$\pm$1.459 \\
        & \textbf{GOPP3-RDDP} & \textbf{100} & \textbf{61.935$\pm$0.857} & \textbf{47.131$\pm$1.550} \\ \bottomrule
    \end{tabular}}
    }
\end{table*}

\section{Experiments}

\subsection{Setup}

The proposed RDDP method and all baselines are implemented in Rust. Experiments are conducted on a laptop with an Intel${}^\text{\textregistered}$ Core${}^\text{TM}$ Ultra 9 Processor 275HX. Optimization problems in baselines are solved by Clarabel \cite{goulart2024clarabel}. The tested methods are as follows:
\begin{itemize}
    \item \textbf{TOPP2-RA} / \textbf{TOPP3-RA}: RA methods for TOPP2 \cite{pham2018new} or TOPP3 \cite{dio2025time}. All GOPP3 problems are convexified at the solution $x_1(s)$ of TOPP2-RA. In TOPP3-RA, the maximal number of vertices for the conservative backward reachable set is set to 16.
    \item \textbf{TOPP3-RA-IRC} (Ours): In contrast to TOPP3-RA, we apply the proposed improved reachable set (IRC) computation method in Section \ref{subsec:ardp_copp3} to TOPP3-RA-IRC, which should be viewed as our contribution.
    \item \textbf{GOPP2-SOCP} / \textbf{GOPP3-SOCP}: The well-known CO methods for GOPP2 \cite{verscheure2009time} or GOPP3 \cite{debrouwere2013time} formulated as second-order cone programs (SOCP). For GOPP3, the SOCP formulation is embedded into the SCP framework.
    \item \textbf{GOPP2-SOCP${}^*$} / \textbf{GOPP3-SOCP${}^*$}: GOPP2-SOCP or GOPP3-SOCP variants with a limit on the computational time for each optimization problem. In Section \ref{sub2sec:random_7axis_path}, the time limit is set to 50\,ms and 150\,ms for GOPP2 and GOPP3, respectively. The iteration is terminated when the time limit is reached, and the historical best solution is returned. In Clarabel, the real computation time is usually slightly longer than the time limit.
    \item \textbf{GOPP2-IDP} / \textbf{GOPP3-IDP}: Grid-based ideal DP method for GOPP2 or GOPP3 \cite{shin1986dynamic}. At each $s_k$, the state grid is discretized with 60 and 30$\times$30 points in GOPP2 and GOPP3, respectively.
    \item \textbf{GOPP3-GDP}: A DP method for GOPP3 based on a greedy beam search heuristic \cite{kaserer2019nearly}. The number of control segments is set to 100. For each control point, the number of sampled states is set to 25.
    \item \textbf{GOPP2-RDDP} / \textbf{GOPP3-RDDP} (Ours): Our proposed RDDP method. At each $s_k$, the initial number of sampled states is set to 5 and 5$\times$5 in GOPP2 and GOPP3, respectively. The backward and forward passes are repeated for 3 and 5 times in GOPP2 and GOPP3, respectively. Furthermore, backward reachable sets of OPP3 are computed by the proposed method in Section \ref{subsec:ardp_copp3}.
\end{itemize}

\subsection{Numerical Experiments}\label{subsec:numerical_experiments}

\subsubsection{Random 7-axis Paths for Performance Comparison}\label{sub2sec:random_7axis_path}

This section randomly generates 100 7-axis spline paths $\vq=\vq(s)$ with pure kinematic constraints. For each path, 20 waypoints are uniformly sampled in $[-1,1]^7$ and interpolated using a uniform quintic spline. The $s$-domain $[0,1]$ is discretized into 1,000 segments. The nominal maximal axial velocity, acceleration, and jerk are set to 1, 5, and 50, respectively. The final upper bounds are obtained by independently perturbing each nominal bound in each axis by a random factor uniformly sampled from $[0.9,1.1]$, while the corresponding lower bounds are set symmetrically. Two different objectives are considered: time objective $J_1=t_\text{f}$ and time-energy hybrid objective $J_2=t_\text{f}+0.1\int_0^{t_\text{f}}\sum_{i=1}^7 \tau_i^2 \, \mathrm{d}t$, where the unit point-mass dynamics $\vtau=\ddot{\vq}$ is applied for energy computation. We refer the readers to \cite[Section \RomanNum{4}--A1]{verscheure2009time} for the detailed SOCP formulation. Problems of orders 2 and 3 are both tested. In other words, four different problem settings are tested: TOPP2, GOPP2, TOPP3, and GOPP3. The quantitative results are summarized in Table \ref{tab:performance_comparison}. The success rate (SR) is determined by whether the algorithm returns a feasible trajectory for each testing case. For GOPP3-GDP, failed cases occur when the greedy pruning process eliminates all candidate grid states at some $s_k$, so no trajectory is returned and the algorithm reports infeasibility.

Table \ref{tab:performance_comparison} provides a clear comparison in terms of computational time ($T_\text{c}$), optimality ($J$), and success rate (SR). With consideration of the entire optimization problem \eqref{eq:op_discretize} as a whole, GOPP2-SOCP and GOPP3-SOCP provide high-quality reference objective values while requiring significantly more computational time than other methods. For the four cases, our RDDP approaches achieve objective values comparable to SOCP methods while requiring substantially less computational time. Specifically, the GOPP2-RDDP and GOPP3-RDDP are approximately 28.6 and 5.8 times faster than GOPP2-SOCP and GOPP3-SOCP, respectively.

Note that once a practical time limit is imposed, GOPP2-SOCP${}^*$ and GOPP3-SOCP${}^*$ exhibit evident optimality degradation. The proposed RDDP approaches achieve better objective values and higher computational efficiency than the corresponding SOCP${}^*$ methods.

\begin{figure}[!]
    \centering
    \includegraphics[width=0.9\columnwidth]{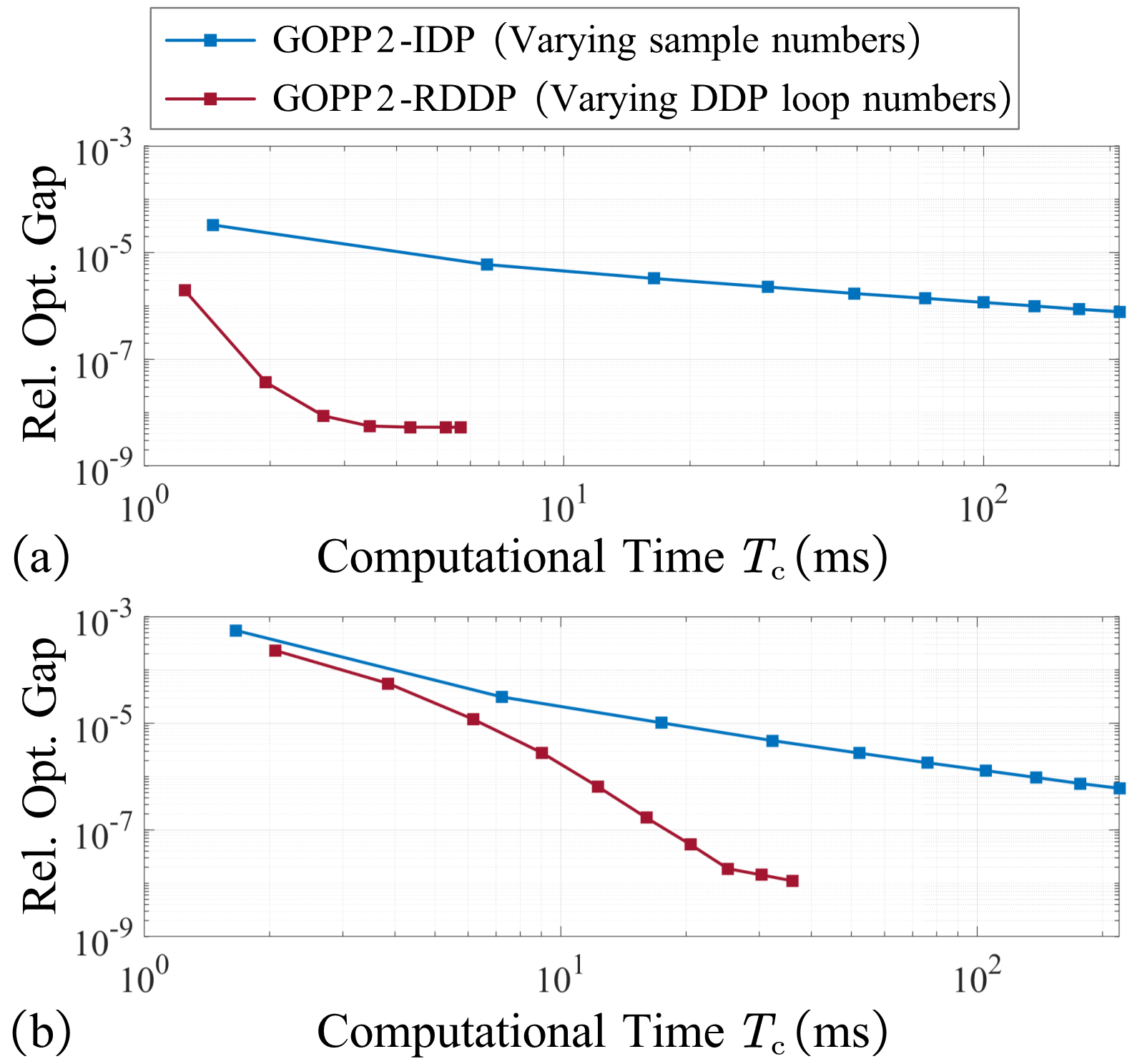}
    \caption{Relative optimality gap of GOPP2-IDP and our GOPP2-RDDP. (a) TOPP2. (b) GOPP2. The objective value of GOPP2-SOCP is used as the reference. This figure demonstrates the faster optimality convergence of the RDDP passes over uniform grid densification.}
    \label{fig:Convergence}
\end{figure}

\begin{figure*}[!]
    \centering
    \includegraphics[width=0.91\textwidth]{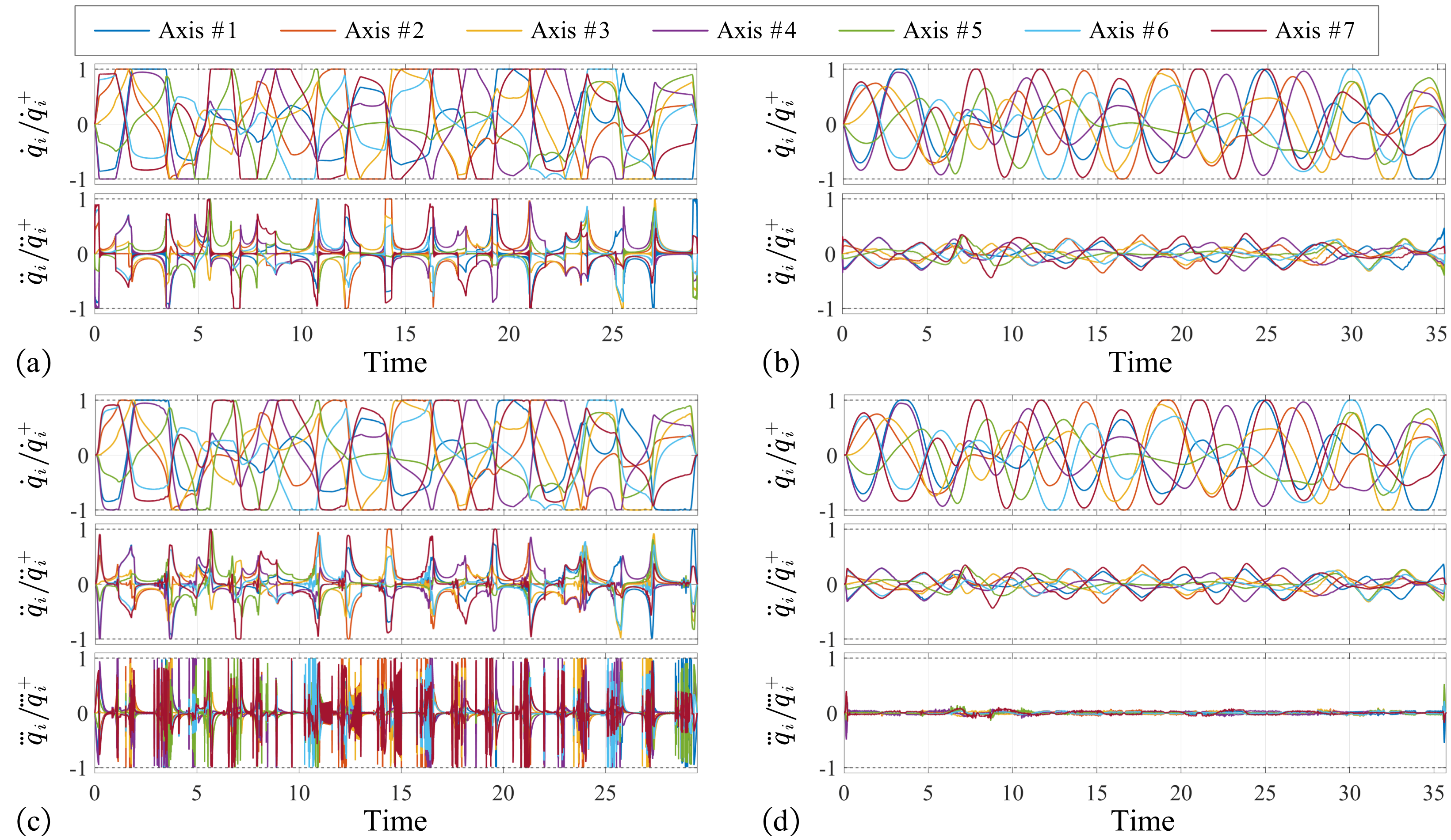}
    \caption{Representative normalized profiles for one instance. The upper bounds are normalized to $\pm1$. (a) TOPP2. (b) GOPP2. (c) TOPP3. (d) GOPP3.}
    \label{fig:trajectories_random}
\end{figure*}

For TOPP2 and TOPP3, RA-based methods achieve the highest computational efficiency, while their greedy forward pass limits their optimality in GOPP2 and GOPP3. In TOPP2, the absolute optimality gaps of TOPP2-RA and our GOPP2-RDDP are $2.05\times10^{-4}$ and $1.99\times10^{-5}$, respectively. With fixed discretization settings, TOPP2-RA does not have a refinement loop and cannot further improve the optimality; in contrast, our GOPP2-RDDP can further reduce the optimality gap by the backward and forward passes, as shown in Fig. \ref{fig:Convergence}(a). Similarly, the optimality gap of the proposed GOPP3-RDDP is significantly smaller than that of TOPP3-RA for the TOPP3 case, especially when considering the conservatism of the backward reachable set. Furthermore, the proposed improved reachable-set computation is beneficial beyond RDDP itself. When plugged into the RA pipeline, TOPP3-RA-IRC reduces the computation time by about 60\% compared with the baseline TOPP3-RA. In summary, RA-based methods are extremely fast for pure TOPP, while the proposed RDDP methods maintain a tighter optimality gap and broader applicability to general objectives due to the objective-awareness value-function information.

In the experiment setting, the proposed RDDP methods outperform grid-based DP baselines in terms of computational time, optimality, and success rate. Compared to the IDP baseline, the proposed RDDP methods remove the restriction that trajectories must pass through predefined state-grid points, which significantly improves optimality and convergence. A comparison of the convergence between GOPP2-IDP and our GOPP2-RDDP is shown in Fig. \ref{fig:Convergence}. It can be observed that the convergence rate of  GOPP2-RDDP is much faster than that of GOPP2-IDP. Note that GOPP2-IDP improves the optimality with an increasing number of grid points, while our GOPP2-RDDP can further improve the optimality by the backward and forward passes. These results show that value-function refinement over continuous reachable domains is more effective than uniform state-grid densification for improving optimality under the tested settings.

The standard IDP method suffers from the curse of dimensionality in TOPP3 and GOPP3, which leads to a significant increase in computational time. With a greedy beam search heuristic, GOPP3-GDP significantly improves efficiency while sacrificing optimality and success rate. Note that GOPP3-GDP lacks a theoretical guarantee of feasibility, and the feasibility is empirically sensitive to the setting of the number of control segments and sampled states. In contrast, our GOPP3-RDDP maintains a $100\%$ success rate and achieves better objective values than GOPP3-GDP within less computational time.

\begin{figure*}[!]
    \centering
    \includegraphics[width=0.91\textwidth]{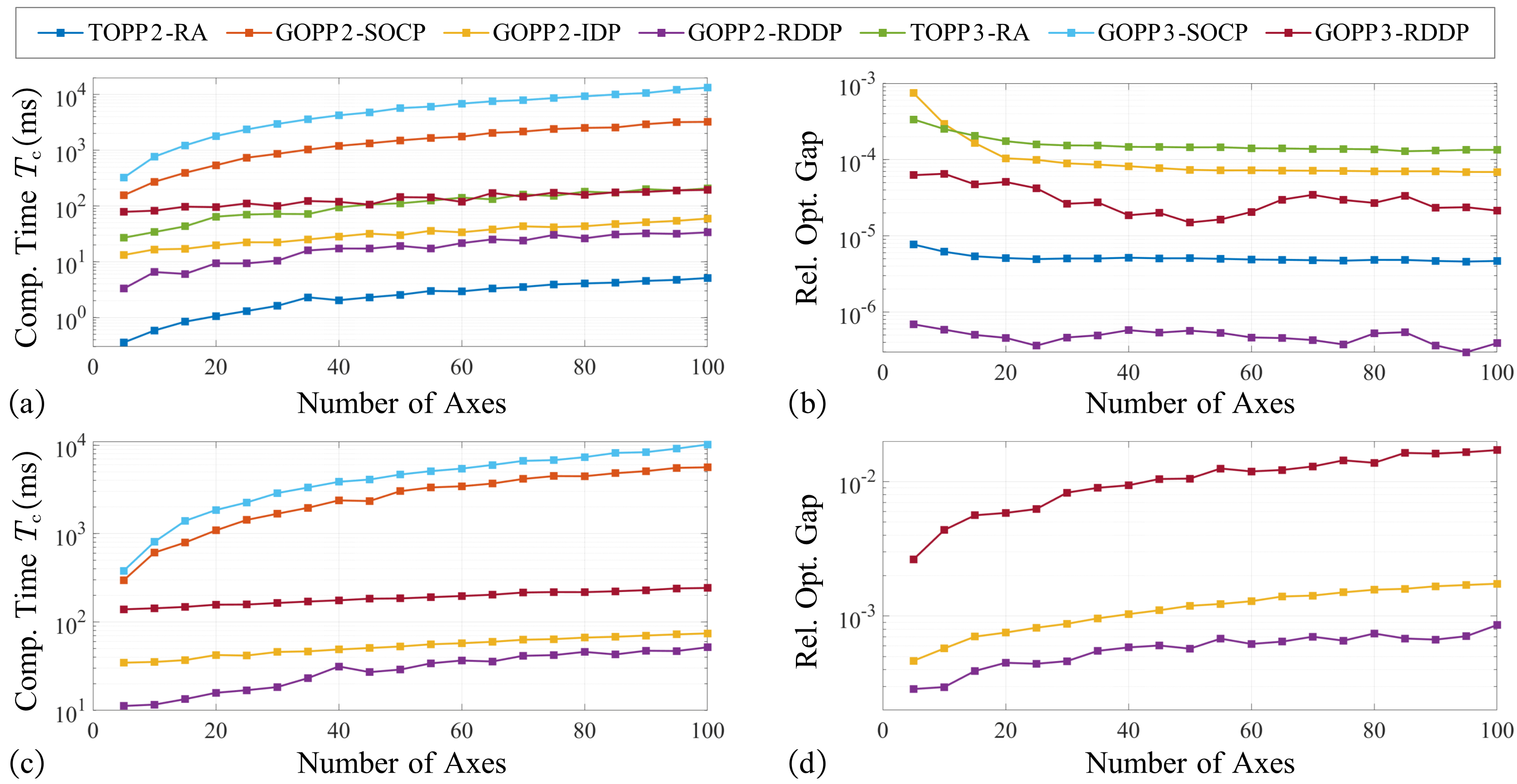}
    \caption{Scalability of different methods to $n$-axis paths. (a) Computational time in TOPP problems. (b) Relative optimality gap in TOPP problems. (c) Computational time in GOPP problems. (d) Relative optimality gap in GOPP problems. In (b) and (d), the objective value of the SOCP methods is considered as the reference.}
    \label{fig:dim_scaling}
\end{figure*}

The profiles generated by the proposed RDDP methods w.r.t. the same path and constraints are shown in Fig. \ref{fig:trajectories_random}. For TOPP2 and TOPP3, the profiles satisfy the bang-singular-bang control law, which is consistent with the well-known PMP \cite{pontryagin1987mathematical}. The TOPP3 profiles take a similar shape to the TOPP2 profiles, while the acceleration are smoother due to the jerk constraints. For GOPP2 and GOPP3, the terminal time is longer than that of TOPP2 and TOPP3, respectively, while the acceleration is smaller due to the time-energy hybrid objective. These results indicate that the proposed RDDP methods can generate trajectories with different optimality and profiles by simply changing the objective function, which demonstrates the flexibility of our method.

\begin{figure*}[!]
    \centering
    \includegraphics[width=\textwidth]{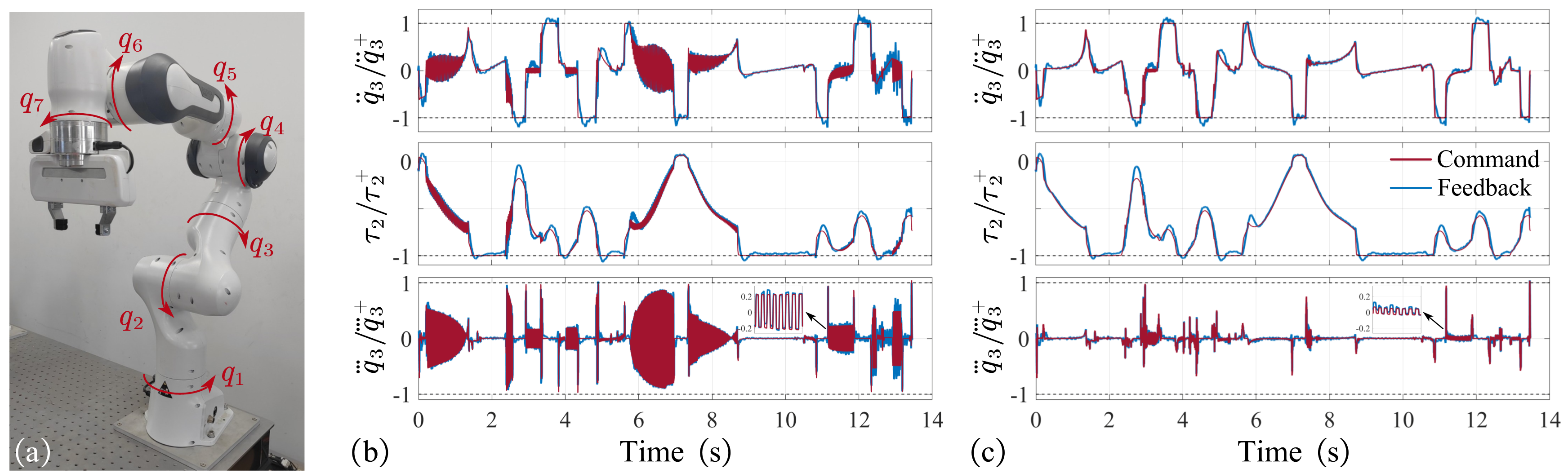}
    \caption{Real-world experiments. (a) The robotic platform. (b) The normalized command and feedback signals in TOPP. (c) The normalized command and feedback signals in GOPP. Both TOPP and GOPP are solved by the proposed RDDP approach.}
    \label{fig:real_trajecotries}
\end{figure*}

\subsubsection{Scalability to $n$-axis Paths}

To further examine the scalability of the proposed RDDP methods, this section randomly generates 10 $n$-axis spline paths $\vq=\vq(s)$ with pure kinematic constraints, where $n=5,10,\dots,100$. The path and constraint settings are the same those in Section \ref{sub2sec:random_7axis_path}. For OPP2 and OPP3, the numbers of inequality constraints are $2n+1$ and $4n+1$ at each $s_k$, respectively.

The results are summarized in Fig. \ref{fig:dim_scaling}. As the number of axes increases, the computational time of all methods increases. The increase rate of the proposed RDDP methods is much smaller than that of the SOCP methods, which is attributed to the efficient low-dimensional DP decomposition of RDDP. Specifically, the computational time of GOPP3-RDDP becomes comparable to that of TOPP3-RA when $n$ is large, which indicates the efficiency of our searching-based method for computing the backward reachable set. For the optimality, the relative optimality gap of the proposed RDDP methods remains small as $n$ increases, which indicates the scalability of our method to high-dimensional problems. The relation of the computational time and optimality gap to $n$ is consistent with the analysis in Section \ref{sub2sec:random_7axis_path}. Therefore, the proposed RDDP methods can be efficiently scaled to high-dimensional problems while maintaining a small optimality gap.

\subsection{Real-World Experiments}\label{subsec:real_world_experiment}

Given the same OPP problem, the solutions of SOCP baselines and the proposed RDDP are highly similar. Therefore, this section conducts a real-world case study to illustrate the practical relevance of general-objective OPP, rather than to re-compare different OPP solvers. As shown in Fig. \ref{fig:real_trajecotries}(a), a 7-axis Franka Emika Panda manipulator is utilized. A quintic spline path with 16 waypoints are followed and discretized into 100 $s$-intervals. The axial constraints are set as follows: the maximal velocity $\dot{\vq}^+$ is [1.1, 1.1, 2, 1.1, 1.3, 1.3, 1.3] rad/s; the maximal acceleration $\ddot{\vq}^+$ is [6, 6, 8, 10, 12, 16, 16] rad/s${}^\text{2}$; the maximal jerk $\dddot{\vq}^+$ is [350, 490, 560, 560, 560, 560, 560] rad/s${}^\text{3}$; the maximal torque $\vtau^+$ is [52.2, 52.2, 52.2, 52.2, 7.2, 7.2, 7.2] N$\cdot$m. Note that the objective in GOPP is a hybrid of time and thermal energy. The closed-loop force control, as well as acquiring and filtering feedback signals, are implemented based on Franka-Rust \cite{Jizhou2025FrankaRust}, where trajectories planned by the proposed RDDP serve as command signals.

\begin{table}[!t]
    \centering
    \caption{Metrics for Real-World Feedback Signals.}
    \label{tab:real_world}
    \begin{tabular}{lll}
        \toprule
        Metric & TOPP3 & GOPP3 \\ \midrule
        Traversal Time $J_\text{time}$ (s) & 13.462 & 13.482 \\
        Normalized Thermal Energy $J_\text{thermal}$ (s) & 11.810 & 11.778 \\
        Normalized Total Variation of Acceleration & 720.614 & 88.862 \\ \bottomrule
    \end{tabular}
\end{table}

A time objective $J_{\text{time}}=\int_{0}^{t_\f}\mathrm{d}t$ and a time-thermal hybrid objective $J_{\text{time}}+5J_{\text{thermal}}=\int_{0}^{t_\f}\Big(1+5\norm[2]{\frac{\vtau(t)}{\vtau^+}}^2\Big)\mathrm{d}t$ are utilized for TOPP and GOPP, respectively. Some typical profiles of the proposed RDDP are shown in Fig. \ref{fig:real_trajecotries}(b)-(c), whereas quantitative results are provided in Table \ref{tab:real_world}. In this task, the gravitational component dominates the joint torques. As a result, the thermal penalty in GOPP does not substantially reduce the overall motion velocity or acceleration, where the resulting traversal time is only 0.15\% longer than that of TOPP. For the same reason, the thermal energy of the TOPP profile is also close to that of the GOPP profile.

Nevertheless, the trajectory smoothness is significantly improved by GOPP, as compared in Fig. \ref{fig:real_trajecotries}(b)-(c). Note that command signals exhibit high-frequency switching in TOPP, which lead to undesirable oscillations in feedback signals. In GOPP, the total variation of the feedback axial acceleration, normalized by $\ddot{\vq}^{+}$, is reduced by 87.7\% compared with TOPP. It is noteworthy that the total variation itself is not explicitly penalized in the GOPP objective. A potential theoretical analysis is provided as follows. A potential explanation is that $J_{\mathrm{thermal}}$ introduces strong convexity w.r.t. $x_2=\ddot{s}$, which discourages oscillatory variations in $x_2$ due to Jensen's inequality. In contrast, the jerk input in TOPP tends to saturate within its admissible bounds due to PMP. As shown in Fig. \ref{fig:real_trajecotries}(b), the planned jerk remains feasible in TOPP, while it empirically alternates between adjacent discretization intervals. The above results are consistent with Fig. \ref{fig:trajectories_random} in numerical tests.

\section{Conclusion and Future Work}

This paper has proposed reachability-augmented dual dynamic programming (RDDP) for solving optimal path parameterization problems (OPP) with general objectives and constraints. Instead of solving the large-scale optimization problem as a whole, RDDP decomposes the problem into a sequence of tractable subproblems by computing backward reachable sets. By explicitly enforcing reachability rather than relying on the relatively complete recourse assumption, the global optimality and the KKT convergence of RDDP are theoretically guaranteed under OPP-specific conditions for convex and non-convex OPP, respectively. Efficient instantiations of RDDP for second- (OPP2) and third-order OPP (OPP3) have been developed. Numerical experiments have demonstrated that RDDP simultaneously achieves high-quality objective values, feasibility, and computational efficiency across the tested OPP2 and OPP3 settings. Specifically, RDDP achieves comparable objective values to convex-optimization methods while improving computational efficiency by 28.6 and 5.8 times for OPP2 and OPP3, respectively. Compared with RA and grid-based DP baselines, RDDP better combines reachability-based feasibility preservation with objective-aware value-function guidance, which leads to high-quality feasible trajectories under general objectives. Moreover, the scalability of our RDDP methods to high-dimensional problems has been verified up to 100-axis paths. In summary, RDDP addresses a key capability gap in OPP by unifying certifiable general-objective optimization, reachability-based feasibility preservation, and online-compatible low-dimensional DP computation in a single state-grid-free framework for both OPP2 and OPP3.

In future work, the sampling strategy for cut-generation points and the conservatism of the backward reachable set can be further guided by the cut-based approximation itself. The one-step DDP optimality gap in \eqref{eq:dual_gap_k} can guide adaptive cut generation to accelerate convergence. Some advanced DDP techniques, such as cut pruning and cut reuse, can also be introduced to RDDP to further improve efficiency.

\section*{Acknowledgments}

The authors would like to thank Dr. Suqin He and Dr. Shize Lin for their expertise in TOPP.

\appendices

\section{Applicability of OPP Formulation \eqref{eq:op_discretize}}\label{app:applicability}

This appendix provides the construction of the OPP formulation \eqref{eq:op_discretize} and discusses its applicability to various OPP scenarios. Consider the description of OPP in the beginning of Section \ref{subsec:formulation_opp}. In the time domain, the state and control variables of $m$-th order OPP problems are $\hat{\vx}(t)=(s(t), \dot{s}(t), \ldots, s^{(m-1)}(t))\in\R^m$ and $\hat{u}(t)=s^{(m)}(t)\in\R$, respectively. Then, the $m$-th order OPP problem can be formulated as the following optimal control problem:
\begin{subequations}\label{eq:ocp_time}
    \begin{align}
        \min_{\hat{\vx}(\bullet), \hat{u}(\bullet), t_\text{f}} \quad &J = \int_0^{t_\text{f}} \widehat{L}(\hat{\vx}(t), \hat{u}(t)) \mathrm{d}t + \widehat\Phi(\hat{\vx}(t_\text{f})) \label{eq:ocp_time_objective}\\
        \st \quad & \dot{s}^{(i-1)}(t) = s^{(i)}(t), \,\,i=1,2,\ldots,m, \\
        &\dot{s}(t) \geq 0,\label{eq:ocp_time_dynamics}\\
        &\hat{\vf}(\hat{\vx}(t); s(t)) \leq \vzero,  \label{eq:ocp_time_state_constraints}\\
        &\hat{\vh}(\hat{\vx}(t), \hat{u}(t); s(t)) \leq \vzero, \label{eq:ocp_time_mix_constraints} \\
        &\hat{\vx}(0)\in\widehat{\mathcal{X}}_0, \,\, \hat{\vx}(t_\text{f})\in\widehat{\mathcal{X}}_\text{f}.\label{eq:ocp_time_boundary}
    \end{align}
\end{subequations}
In OPP \eqref{eq:ocp_time}, Eqs. \eqref{eq:ocp_time_dynamics}--\eqref{eq:ocp_time_mix_constraints} are set for $t\in(0,t_\text{f})$.

Following \cite{verscheure2009time,debrouwere2013time}, OPP \eqref{eq:ocp_time} can be transformed into equivalent problems in the $s$-domain to get rid of the free terminal time $t_\text{f}$ and non-convexity caused by the time variable. Denote the derivative of a variable $\bullet$ with respect to $s$ as $\bullet'=\frac{\mathrm{d}\bullet}{\mathrm{d}s}$. Let $x_1(s)=\frac12\dot{s}^2$ and $x_{i+1}(s)=x_i'(s)$ for $i=1,2,\ldots,m-1$. Then, the state and control variables in the $s$-domain are $\vx(s)=(x_i)_{i=1}^{m-1}\in\R^{m-1}$ and $u(s)=x_{m}(s)\in\R$, respectively. The $m$-th order OPP problem can be represented as the following optimal control problem in the $s$-domain:
\begin{subequations}\label{eq:ocp_parameter}
    \begin{align}
        \min_{\vx(\bullet), u(\bullet)} \quad &J = \int_0^{s_\text{f}} L(\vx(s), u(s); s) \mathrm{d}s + \Phi(\vx(s_\text{f})) \label{eq:ocp_parameter_objective}\\
        \st \quad & x_i'(s) = x_{i+1}(s), \,\,i=1,2,\ldots,m-1, \\
        &\vf(\vx(s); s) \leq \vzero,  \label{eq:ocp_parameter_state_constraints}\\
        &\vh(\vx(s), u(s); s) \leq \vzero, \label{eq:ocp_parameter_mix_constraints} \\
        &\vx(0)\in \mathcal{X}_0, \,\, \vx(s_\text{f})\in \mathcal{X}_\text{f}.\label{eq:ocp_parameter_boundary}
    \end{align}
\end{subequations}
Specifically, the constraint $x_1(s)\geq0$, $\forall s\in(0,s_\text{f})$ is contained in the state constraints \eqref{eq:ocp_parameter_state_constraints}.

Generally, OPP2 \cite{verscheure2009time,pham2018new} and OPP3 \cite{debrouwere2013time,wang2026online} problems are of interest. Denote
\begin{equation}\label{eq:state_control_relation}
    x_2(s)=x_1'(s)=\frac{\dot{x}_1}{\dot{s}}=\ddot{s},\,\,x_3(s)=x_2'(s)=\frac{\dot{x}_2}{\dot{s}}=\frac{\dddot{s}}{\dot{s}}.
\end{equation}
For OPP2, the decision variables are $\vx(s)=x_1(s)$ and $u(s)=x_2(s)$, respectively. For OPP3, the decision variables are $\vx(s)=(x_1(s), x_2(s))$ and $u(s)=x_3(s)$, respectively.

It has been recognized that OPP \eqref{eq:ocp_parameter} provides a highly expressive framework, capable of modeling various objectives and constraints. Some examples are provided as follows.

\begin{example}
    Consider the axial torque constraint $\underline{\vtau}\leq\vtau(s)\leq \overline\vtau$. Denote the dynamics of the robotic system as
    \begin{equation}
        \mM(\vq)\ddot{\vq} + \mC(\vq, \dot{\vq})\dot{\vq} + \vg(\vq) = \vtau,
    \end{equation}
    where $\mM(\vq)$, $\mC(\vq, \dot{\vq})$, and $\vg(\vq)$ are the inertia matrix, the Coriolis and centrifugal matrix, and the gravity vector, respectively. Specifically, $\mC(\vq, \dot{\vq})$ is linear w.r.t. $\dot{\vq}$. Note that
    \begin{subequations}
        \begin{align}
            &\dot{\vq}=\vq'\dot{s}=\vq'\sqrt{2x_1},\label{eq:velocity}\\
            &\ddot{\vq}=\vq''\dot{s}^2+\vq'\ddot{s}=2\vq''x_1+\vq'x_2.\label{eq:acceleration}
        \end{align}
    \end{subequations}
    Then, the torque can be expressed as
    \begin{equation}\label{eq:torque}
        \vtau(s) = \mM(\vq)(2\vq''x_1+\vq'x_2) + 2\mC(\vq, \vq')\vq'x_1 + \vg(\vq),
    \end{equation}
    which is affine in $x_1$ and $x_2$. In OPP2, the torque constraint can be represented as the mixed constraints \eqref{eq:ocp_parameter_mix_constraints}. In OPP3, it can be represented as the state constraints \eqref{eq:ocp_parameter_state_constraints}.

    Similar analysis can be applied to the box constraints on axial velocity \eqref{eq:velocity} and acceleration \eqref{eq:acceleration}, both of which can be transformed into linear inequalities in $x_1$ and $x_2$.
\end{example}

\begin{example}
    For OPP3, the constraint on the axial jerk $\dddot{\vq}$ can be represented as the mixed constraints \eqref{eq:ocp_parameter_mix_constraints}. Since
    \begin{equation}
        \dddot{\vq}=\vq'''\dot{s}^3 + 3\vq''\dot{s}\ddot{s} + \vq'\dddot{s} = \sqrt{2x_1}(2\vq'''x_1 + 3\vq''x_2 + \vq'x_3),
    \end{equation}
    the jerk constraint $\underline{\dddot{\vq}}\leq \dddot{\vq}\leq \overline{\dddot{\vq}}$ can be represented as
    \begin{equation}\label{eq:jerk_constraint}
        \frac{1}{\sqrt{2x_1}}\underline{\dddot{\vq}} \leq 2\vq'''x_1 + 3\vq''x_2 + \vq'x_3 \leq \frac{1}{\sqrt{2x_1}}\overline{\dddot{\vq}}.
    \end{equation}
    In \eqref{eq:jerk_constraint}, the limits of jerk usually satisfy $\underline{\dddot{\vq}}<\vzero<\overline{\dddot{\vq}}$. Then, the jerk constraint is concave in $x_1$. Similar analysis can be applied to the constraint on the axial torque rate $\dot{\vtau}$ \cite{debrouwere2013time,wang2026online}.
\end{example}

\begin{example}
    \begin{subequations}
        Consider some common objectives. In TOPP, the objective is to minimize the terminal time
        \begin{equation}\label{eq:objective_time}
            J_1=\int_0^{t_\text{f}} \mathrm{d}t = \int_0^{s_\text{f}} \frac{1}{\dot{s}} \mathrm{d}s = \int_0^{s_\text{f}} \frac{1}{\sqrt{2x_1(s)}} \mathrm{d}s.
        \end{equation}
        \cite{verscheure2009time} further considers the thermal energy
        \begin{equation}\label{eq:objective_thermal}
            J_2 = \int_0^{t_\text{f}} \vtau^\top \mQ\vtau \mathrm{d}t = \int_0^{s_\text{f}} \frac{\vtau^\top\mQ\vtau}{\sqrt{2x_1}} \mathrm{d}s,\,\mQ\succeq \vzero,
        \end{equation}
        and the total variation of the torque
        \begin{equation}\label{eq:objective_torque_variation}
            J_3 = \int_0^{t_\text{f}} \norm[]{\dot{\vtau}} \mathrm{d}t= \int_0^{s_\text{f}} \frac{\norm[]{\vtau'\dot{s}}}{\dot{s}} \mathrm{d}s = \int_0^{s_\text{f}} \norm[]{\vtau'} \mathrm{d}s.
        \end{equation}
    \end{subequations}
    In OPP2, the linear combination of \eqref{eq:objective_time} and \eqref{eq:objective_thermal} can be represented as the objective \eqref{eq:ocp_parameter_objective}. In OPP3, the linear combination of all the three objectives can be handled.
\end{example}

Finally, OPP \eqref{eq:ocp_parameter} can be discretized into the formulation \eqref{eq:op_discretize}. All inequality constraints are enforced at the grid points $s_k$ or the two-side limits $s_k^\pm$. Specifically, the state constraints \eqref{eq:op_discretize_state_constraints} refer to $\vf(\vx(s_k); s_k)=\vf(\vx_k; s_k)\leq\vzero$ in \eqref{eq:ocp_parameter_state_constraints}, whereas the mixed constraints \eqref{eq:op_discretize_mix_constraints} refer to $\vh(\vx(s_k), u(s_k^+); s_k)=\vh(\vx_k, u_k; s_k)\leq\vzero$ for $k<N$ and $\vh(\vx(s_k), u(s_k^-); s_k)=\vh(\vx_k, u_{k-1}; s_k)=\vh(\vA_{k}\vx_{k-1}+\vb_{k-1}u_{k-1}, u_{k-1}; s_k)\leq\vzero$ for $k>0$. Furthermore, the objective function $L_k$ in \eqref{eq:op_discretize_objective} is a numerical approximation of the integral $\int_{s_{k-1}}^{s_k} L(\vx(s), u(s); s) \mathrm{d}s$ in \eqref{eq:ocp_parameter_objective}. In summary, all the functions and sets in \eqref{eq:op_discretize} are given, whereas $\vx_k$ and $u_k$ for all $k$ are the decision variables.

\section{Proofs of Theorems and Propositions}\label{app:proofs}

\begin{lemma}\label{lemma:min_is_convex}
    Consider a convex function $V:\Omega\subset\R^m\times\R^n\to\R$ where $\Omega$ is a non-empty convex set. Define the function $U(\vx)=\min_{\vy\in\mathcal{Y}(\vx)} V(\vx,\vy)$, where the domain of $U$ is $\dom U=\{\vx\in\R^m\mid \mathcal{Y}(\vx)\neq\varnothing\}$, and $\mathcal{Y}(\vx)=\{\vy\in\R^n\mid (\vx,\vy)\in\Omega\}$. Then, $U$ is convex in the convex set $\dom U$.
\end{lemma}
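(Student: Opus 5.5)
The plan is a direct two-step argument: show that $\dom U$ is the projection of the convex set $\Omega$ onto the $\vx$-coordinates, and then verify Jensen's inequality for $U$ by combining near-minimizers. Throughout, I will read $U$ as an infimum, since the statement does not guarantee that the minimum is attained. If the infimum equals $-\infty$ somewhere, convexity is understood in the extended-real sense. In the applications of this lemma, compactness of the feasible sets (Assumption \ref{assum:feasibility_smoothness}) and continuity of $V$ ensure that the minimum is attained and finite.

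First, convexity of $\dom U$. Note that $\dom U=\{\vx\mid \exists\vy,\,(\vx,\vy)\in\Omega\}=\pi(\Omega)$, where $\pi(\vx,\vy)=\vx$ is a linear map. The image of a convex set under a linear map is convex. Concretely, take $\vx_1,\vx_2\in\dom U$, pick $\vy_i\in\mathcal{Y}(\vx_i)$, and let $\theta\in[0,1]$. Convexity of $\Omega$ gives $(\theta\vx_1+(1-\theta)\vx_2,\theta\vy_1+(1-\theta)\vy_2)\in\Omega$. Hence $\theta\vy_1+(1-\theta)\vy_2\in\mathcal{Y}(\theta\vx_1+(1-\theta)\vx_2)$, so this set is nonempty and the convex combination lies in $\dom U$.

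Second, the convexity inequality. Fix $\vx_1,\vx_2\in\dom U$, $\theta\in[0,1]$, and $\varepsilon>0$, and assume $U(\vx_1),U(\vx_2)$ are finite. Choose $\vy_i\in\mathcal{Y}(\vx_i)$ with $V(\vx_i,\vy_i)\le U(\vx_i)+\varepsilon$; if the minimum is attained, take exact minimizers. By the first step, $\bar\vy=\theta\vy_1+(1-\theta)\vy_2$ is feasible for $\bar\vx=\theta\vx_1+(1-\theta)\vx_2$. Therefore
\begin{equation*}
U(\bar\vx)\le V(\bar\vx,\bar\vy)\le \theta V(\vx_1,\vy_1)+(1-\theta)V(\vx_2,\vy_2)\le \theta U(\vx_1)+(1-\theta)U(\vx_2)+\varepsilon,
\end{equation*}
where the middle inequality is joint convexity of $V$ on $\Omega$. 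Letting $\varepsilon\to0$ gives the claim. If $U(\vx_1)$ or $U(\vx_2)$ equals $-\infty$, the same construction with $V(\vx_i,\vy_i)\le -M$ and $M\to\infty$ shows $U(\bar\vx)=-\infty$, so the inequality still holds.

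The only point requiring care is that $U$ may fail to attain its minimum or may be unbounded below. The near-minimizer argument above handles both cases. The key structural input is that feasibility of $(\vx,\vy)$ is joint membership in a single convex set $\Omega$, which makes the averaged $\bar\vy$ admissible for the averaged $\bar\vx$.
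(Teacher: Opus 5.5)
Your proof is correct. It argues pointwise, whereas the paper argues at the level of sets. The paper's proof is essentially one line: $\text{epi}(U)$ is the image of the convex set $\text{epi}(V)$ under the linear map $(\vx,\vy,t)\mapsto(\vx,t)$, hence convex. Convexity of $U$ follows, and convexity of $\dom U$ follows as a further projection.

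You instead verify the two ingredients directly. You show $\dom U=\pi(\Omega)$ by averaging feasible $\vy$'s, and you obtain the Jensen inequality by averaging (near-)minimizers and using joint convexity of $V$. The underlying mechanism is the same in both: a convex combination of feasible pairs is feasible, and $V$ is jointly convex.

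The epigraph route buys brevity. However, the identity ``projection of $\text{epi}(V)$ equals $\text{epi}(U)$'' relies on the minimum being attained. The lemma's ``$\min$'' presupposes this, and it holds in the paper's applications, where $V$ is continuous on compact sets. Without attainment, the projection only lies between the strict epigraph and the epigraph, and a short extra remark is needed. Your $\varepsilon$-minimizer argument sidesteps this and covers the pure-infimum and $-\infty$ cases directly, at the cost of slightly more bookkeeping.

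One cosmetic point: in the $-\infty$ case, the conclusion $U(\bar\vx)=-\infty$ holds only when the point with $U=-\infty$ carries positive weight. The endpoint cases $\theta\in\{0,1\}$ are trivial and should be set aside.
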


\begin{proof}
    Denote the epigraph of $V$ and $U$ as $\text{epi}(V)$ and $\text{epi}(U)$, respectively. By the definition of $U$, $\text{epi}(U)$ is the projection of $\text{epi}(V)$ onto the $\vx$-subspace, which is convex since $\text{epi}(V)$ is convex. Therefore, $U$ is convex in $\dom U$.
\end{proof}

\begin{proof}[Proof of Lemma \ref{lemma:convexity_reachset_valfunc}]
    Consider the following convex set
    \begin{align}
        \widehat{\mathcal{B}}_k=\Big\{&(\vx_{k}, \ldots, \vx_N, u_k, \ldots, u_{N-1})\mid \notag\\
        &\forall k\leq j<N,\,\vx_{j+1} = \vA_j\vx_j+\vb_j u_j, \notag\\
        &\vf_j(\vx_j) \leq \vzero,\,\vh_j(\vx_j, u_j) \leq \vzero,\,\vx_N \in \mathcal{X}_\text{f} \Big\}.\label{eq:widehat_backward_set}
    \end{align}
    By Assumption \ref{assum:feasibility_smoothness}, $\widehat{\mathcal{B}}_k$ is compact. As a projection of $\widehat{\mathcal{B}}_k$ onto the $\vx_k$-subspace, $\mathcal{B}_k$ is also convex and compact. By Lemma \ref{lemma:min_is_convex}, $V_k^*$ is convex in $\mathcal{B}_k$ since $\widehat{V}_k:\widehat{\mathcal{B}}_k\to\R$ is convex:
    \begin{equation}
        \widehat{V}_k\left((\vx_j)_{j=k}^N, (u_j)_{j=k}^{N-1}\right) = \sum_{j=k}^{N-1} L_j(\vx_j, u_j) + \Phi(\vx_N).
    \end{equation}
    The convexity and compactness of $\mathcal{C}_k(\vx_k)$ is straightforward.

    If all the constraints are linear inequalities, then $\widehat{\mathcal{B}}_k$ is a polytope, and thus $\mathcal{B}_k$ is also a polytope.
\end{proof}

\begin{proof}[Proof of Proposition \ref{prop:Lipschitz}]
    We prove this proposition by induction. For $k=N$, \eqref{eq:approximation_terminal_value} and Assumption \ref{assum:Lipschitz} imply that $\mathcal{V}_N^p=\Phi$ is convex and locally Lipschitz continuous in $\mathcal{B}_N=\mathcal{X}_\text{f}$. Then, $\forall \vx_N\in\mathcal{B}_N$, $\exists \varepsilon_N^V(\vx_N)>0$ small enough and $C_N^V(\vx_N)>0$ independent of $p$, s.t. $\forall \vy_1,\vy_2\in\mathcal{B}_N\cap\mathbb{B}(\vx_N,\varepsilon_N^V(\vx_N))$, we have $\abs{\mathcal{V}_N^p(\vy_1)-\mathcal{V}_N^p(\vy_2)}\leq C_N^V(\vx_N)\norm[2]{\vy_1-\vy_2}$. Furthermore, the Clarke subgradient of $\mathcal{V}_N^p=\Phi$ exists and is controlled by $C_N^V$ due to its local Lipschitz continuity \cite{clarke1990optimization}.

    For $k<N$, assume that the conclusion holds for $\mathcal{V}_{k+1}^p$. Fix $\vx\in\mathcal{B}_k$. We aim to prove that the conclusion also holds for $\widehat{\mathcal{V}}_k^p$ and $\mathcal{V}_k^p$. Let $\widehat{J}_k(u;\vy)=L_k(\vy, u) + \mathcal{V}_{k+1}^p(\vA_k\vy+\vb_k u)$. By \eqref{eq:approximate_value_function_backward}, we have $\forall \vy\in\mathcal{B}_k$,
    \begin{subequations}\label{eq:approximation_proof}
        \begin{align}
            \widehat{\mathcal{V}}_k^p(\vy) = \min\,\,& \widehat{J}_k(u;\vy) \\
            \st\,\,&\underbar{u}_k(\vy)\leq u\leq \bar{u}_k(\vy).
        \end{align}
    \end{subequations}
    According to Lemma \ref{lemma:min_is_convex}, $\widehat{\mathcal{V}}_k^p$ is convex. In the following, we only prove the local Lipschitz continuity of $\widehat{\mathcal{V}}_k^p$, whereas the existence as well as the boundedness of its Clarke subgradient is straightforward as a corollary \cite{clarke1990optimization}.

    By the induction hypothesis, $\forall u\in\mathcal{C}_k(\vx)$, $\exists \varepsilon_J(\vx,u)>0$, $C_{\vx}^J(\vx,u)>0$, and $C_{u}^J(\vx,u)>0$, s.t. $\forall \vy_1,\vy_2\in\mathcal{B}_k\cap\mathbb{B}(\vx,\varepsilon_J(\vx,u))$, $v_1\in\mathcal{C}_k(\vy_1)\cap\mathbb{B}(u,\varepsilon_J(\vx,u))$, and $v_2\in\mathcal{C}_k(\vy_2)\cap\mathbb{B}(u,\varepsilon_J(\vx,u))$, we have $\abs{\widehat{J}_k(v_1;\vy_1)-\widehat{J}_k(v_2;\vy_2)}\leq C_{\vx}^J(\vx,u)\norm[2]{\vy_1-\vy_2}+C_{u}^J(\vx,u)\abs{v_1-v_2}$. Note that $\varepsilon_J$, $C_{\vx}^J$, and $C_{u}^J$ can be constructed by the local Lipschitz constants of $L_k$ and $\mathcal{V}_{k+1}^p$, which are all independent of $p$.

    By Assumption \ref{assum:Lipschitz}, $\underbar{u}_k$ and $\bar{u}_k$ are both locally Lipschitz continuous. Therefore, $\exists \varepsilon_u(\vx)>0$, $C_k^u(\vx)>0$, s.t. $\forall \vy_1,\vy_2\in\mathcal{B}_k\cap\mathbb{B}(\vx,\varepsilon_u(\vx))$, we have $\abs{\underbar{u}_k(\vy_1)-\underbar{u}_k(\vy_2)}$, $\abs{\bar{u}_k(\vy_1)-\bar{u}_k(\vy_2)}\leq C_k^u(\vx)\norm[2]{\vy_1-\vy_2}$. The constants $\varepsilon_u(\vx)$ and $C_k^u(\vx)$ are also independent of $p$.

    Consider the open cover of the compact set $\mathcal{C}_k(\vx)\subset\bigcup_{u\in\mathcal{C}_k(\vx)}\mathbb{B}(u,\varepsilon_J(\vx,u))$. Select a finite subcover $\mathcal{C}_k(\vx)\subset\bigcup_{i=1}^M\mathbb{B}(u^i,\varepsilon_J(\vx,u^i))$ where $\bar{u}_k(\vx)$ and $\underbar{u}_k(\vx)$ are both contained in $\{u^i\}_{i=1}^M$. Define two positive constants:
    \begin{subequations}
        \begin{align}
            &\varepsilon_k^V(\vx)=\min\left\{\varepsilon_u(\vx), \min_{1\leq i\leq M}\frac{\varepsilon_J(\vx,u^i)}{1+C_k^u(\vx)}\right\},\\
            &C_k^V(\vx)=\max_{1\leq i\leq M}\Big\{C_{\vx}^J(\vx,u^i)+C_{u}^J(\vx,u^i)C_k^u(\vx)\Big\},
        \end{align}
    \end{subequations}
    both of which are independent of $p$. 

    Consider any $\vy_1,\vy_2\in\mathcal{B}_k\cap\mathbb{B}(\vx,\varepsilon_k^V(\vx))$. Select a one-step optimal control $v^*\in\argmin_{u\in\mathcal{C}_k(\vy_1)} \widehat{J}_k(u;\vy_1)$. Note that $\mathcal{C}_k(\vy_1)\subset\mathbb{B}(\mathcal{C}_k(\vx),C_k^u(\vx)\varepsilon_k^V(\vx))\subset\bigcup_{i=1}^M\mathbb{B}(u^i,\varepsilon_J(\vx,u^i))$ due to the fact that $\bar{u}_k(\vx),\underbar{u}_k(\vx)\in\{u^i\}_{i=1}^M$. Denote a feasible control $\hat{v}=\max\{\min\{u^*,\bar{u}_k(\vy_2)\},\underbar{u}_k(\vy_2)\}\in\mathcal{C}_k(\vy_2)$. Then, we construct some convex combinations of $(\vy_1,v^*)$ and $(\vy_2,\hat{v})$ as middle points. Specifically, let $0=\lambda_0<\lambda_1<\cdots<\lambda_I=1$. For each $1\leq i\leq I$, define $\vy^i=\lambda_i\vy_1+(1-\lambda_i)\vy_2$ and $v^i=\lambda_i v^*+(1-\lambda_i)\hat{v}$. Without loss of generality, assume that $v^i\in\mathbb{B}(u^{j_i},\varepsilon_J(\vx,u^{j_i}))\cap\mathbb{B}(u^{j_{i-1}},\varepsilon_J(\vx,u^{j_{i-1}}))$ for some $1\leq j_i\leq M$. Then, we have
    \begin{align}
        &\widehat{\mathcal{V}}_k^p(\vy_2)-\widehat{\mathcal{V}}_k^p(\vy_1)\notag\\
        \leq& \widehat{J}_k(\hat{v};\vy_2) - \widehat{J}_k(v^*;\vy_1)\notag\\
        =&\sum_{i=1}^{I}\left[\widehat{J}_k(v^i;\vy^i) - \widehat{J}_k(v^{i-1};\vy^{i-1})\right]\notag\\
        \leq& \sum_{i=1}^{I}\left[C_{\vx}^J(\vx,u^{j_i})\norm[2]{\vy^i-\vy^{i-1}}+C_{u}^J(\vx,u^{j_i})\abs{v^i-v^{i-1}}\right]\notag\\
        \leq& \sum_{i=1}^{I}\left[(C_{\vx}^J(\vx,u^{j_i})+C_{u}^J(\vx,u^{j_i})C_k^u(\vx))\norm[2]{\vy^i-\vy^{i-1}}\right]\notag\\
        \leq& C_k^V(\vx)\sum_{i=1}^{I}\norm[2]{\vy^i-\vy^{i-1}}
        =C_k^V(\vx)\norm[2]{\vy_2-\vy_1}.
    \end{align}
    For the same reason, we also have $\widehat{\mathcal{V}}_k^p(\vy_1)-\widehat{\mathcal{V}}_k^p(\vy_2)\leq C_k^V(\vx)\norm[2]{\vy_2-\vy_1}$. As a result, we have
    \begin{equation}\label{eq:proof_Lipschitz_V}
        \abs{\widehat{\mathcal{V}}_k^p(\vy_2)-\widehat{\mathcal{V}}_k^p(\vy_1)}\leq C_k^V(\vx)\norm[2]{\vy_2-\vy_1},
    \end{equation}
    which provides the local Lipschitz continuity of $\widehat{\mathcal{V}}_k^p$ at $\vx$.

    According to \cite{clarke1990optimization}, $C_k^V(\vx)$ also controls the Clarke subgradient of $\widehat{\mathcal{V}}_k^p$ at $\vx$, $\forall \vx\in\mathcal{B}_k$. By \eqref{eq:support_plane}, $\mathcal{V}_k^p$ is a maximum of finitely many supporting hyperplanes of $\widehat{\mathcal{V}}_k^p$; hence, $\mathcal{V}_k^p$ is also convex and locally Lipschitz continuous with the same local Lipschitz constant $C_k^V$.

    Note that the above analysis is only dependent on the local Lipschitz continuity of $L_k$, $\Phi$, $\underbar{u}_k$, and $\bar{u}_k$. For the same reason, the conclusion with the same local Lipschitz constant $C_k^V$ also holds for the real value function $V_k^*$. Note that $C_k^V$ is independent of $p$.
\end{proof}

\begin{proof}[Proof of Proposition \ref{prop:bound_of_value_function}]
    According to Proposition \ref{prop:Lipschitz}, $\widehat{\mathcal{V}}_k^p$ is convex and has Clarke subgradient. In the definition \eqref{eq:support_plane}, for each $i$, we have $\widehat{\mathcal{V}}_k^p(\vx_k)\geq\widehat{\mathcal{V}}_k^p(\vx_k^i)+{\vg_k^i}^\top(\vx_k-\vx_k^i)$. Therefore, $\widehat{\mathcal{V}}_k^p(\vx_k)\geq\mathcal{V}_k^p(\vx_k)$ holds. According to the definition of the real value function \eqref{eq:def_value_function}, $V_k^*(\vx_k)\leq J_k(\bx_k,\bu_k)$ also holds.

    Next, we prove that $\widehat{\mathcal{V}}_k^p(\vx_k)\leq V_k^*(\vx_k)$ by induction. For $k=N-1$, $\mathcal{V}_N^p(\vx_N)=V_N^*(\vx_N)=J_N(\bx_N,\bu_N)$ holds. By comparing \eqref{eq:def_value_function} and \eqref{eq:approximate_value_function_backward}, we have $\widehat{\mathcal{V}}_{N-1}^p(\vx_{N-1})=V_{N-1}^*(\vx_{N-1})$. For $k<N-1$, assume that $\widehat{\mathcal{V}}_{k+1}^p(\vx_{k+1})\leq V_{k+1}^*(\vx_{k+1})$ holds for each $\vx_{k+1}\in\mathcal{B}_{k+1}$. For any $\vx_k\in\mathcal{B}_k$, we have $\mathcal{V}_{k}^p(\vx_{k})\leq V_{k}^*(\vx_{k})$ since $\mathcal{V}_{k+1}^p(\vx_{k+1})\leq\widehat{\mathcal{V}}_{k+1}^p(\vx_{k+1})\leq V_{k+1}^*(\vx_{k+1})$. By comparing \eqref{eq:def_value_function} and \eqref{eq:approximate_value_function_backward}, we have $\widehat{\mathcal{V}}_k^p(\vx_k)\leq V_k^*(\vx_k)$.
\end{proof}

\begin{proof}[Proof of Theorem \ref{thm:near_optimality_gap}]
    Evidently, $J_0(\bx_0^{p*},\bu_0^{p*}) \geq J^*$ holds. Arbitrarily select a globally optimal solution $\bx_0^*=(\vx_k^*)_{k=0}^N$ and $\bu_0^*=(u_k^*)_{k=0}^{N-1}$ of COPP \eqref{eq:op_discretize}. By \eqref{eq:refine_x0}, we have $\mathcal{V}_0^p(\vx_0^{p*}) \leq \mathcal{V}_0^p(\vx_0^*)$. By Proposition \ref{prop:bound_of_value_function}, we have $\mathcal{V}_0^p(\vx_0^{*}) \leq V_0^*(\vx_0^{*}) = J^*$. Hence, we have $\varepsilon_\text{gap}(\bx_0^{p*},\bu_0^{p*}) = J_0(\bx_0^{p*},\bu_0^{p*}) - \mathcal{V}_0^p(\vx_0^{p*}) \geq J_0(\bx_0^{p*},\bu_0^{p*}) - \mathcal{V}_0^p(\vx_0^{*}) \geq J_0(\bx_0^{p*},\bu_0^{p*}) - J^* \geq 0$.
\end{proof}

\begin{proof}[Proof of Theorem \ref{thm:adrp_optimal}]
    Consider the solution sequence $\{(\bx_0^{p*},\bu_0^{p*})\}_{p=0}^\infty$ generated by RDDP. According to Assumption \ref{assum:feasibility_smoothness}, the feasible set of COPP \eqref{eq:op_discretize}, denoted by $\mathcal{F}$, is compact. We select a convergent subsequence $\{(\bx_0^{p_i*},\bu_0^{p_i*})\}_{i=0}^\infty$ with the feasible limit point $(\bx_0^*,\bu_0^*)\in\mathcal{F}$, where $\bx_0^*=(\vx_k^*)_{k=0}^N$ and $\bu_0^*=(u_k^*)_{k=0}^{N-1}$. Next, we prove that $(\bx_0^*,\bu_0^*)$ is a globally optimal solution of COPP \eqref{eq:op_discretize}, which implies \eqref{eq:convergence_condition} due to the local Lipschitz continuity of the real cost function $J_0$.

    According to Proposition \ref{prop:Lipschitz}, for each $k$, convex functions $\widehat{\mathcal{V}}_k^p$ and $\mathcal{V}_k^p$ share the same local Lipschitz constant $C_k^V(\vx_k^*)$ and the associated neighborhood radius $\varepsilon_k^V(\vx_k^*)$ at $\vx_k^*\in\mathcal{B}_k$ across $p$. Arbitrarily select a small $\delta\in(0,\min_{k}\varepsilon_k^V(\vx_k^*))$. By the convergence of $\{\bx_0^{p_i*}\}_{i=0}^\infty$ to $\bx_0^*$, there exists a large index $I(\delta)\in\N^*$, s.t. $\forall i\geq I(\delta)$, we have $\norm[2]{\vx_k^{p_i*}-\vx_k^*}<\delta<\varepsilon_k^V(\vx_k^*)$ for all $k$.

    Consider any $i>j\geq I(\delta)$. According to the RDDP sampling process \eqref{eq:ardp_more_sample}, for each $k$, the state $\vx_k^{p_j*}\in\mathcal{B}_k$ has been sampled to construct $\mathcal{V}_k^{p_i}$ in \eqref{eq:support_plane}. Therefore, we have 
    \begin{equation}\label{eq:proof_ardp_previous_sample}
        \widehat{\mathcal{V}}_k^{p_i}(\vx_k^{p_j*})=\mathcal{V}_k^{p_i}(\vx_k^{p_j*}).
    \end{equation}
    According to \eqref{eq:widehat_V_L_V_refine}, \eqref{eq:proof_ardp_previous_sample}, and Proposition \ref{prop:Lipschitz}, we have
    \begin{align}
        &\varepsilon_\text{gap}(\bx_0^{p_i*},\bu_0^{p_i*})\notag\\
        =&\Phi(\vx_N^{p_i*}) - \mathcal{V}_0^{p_i}(\vx_0^{p_i*})+\sum_{k=0}^{N-1} L_k(\vx_k^{p_i*}, u_k^{p_i*})\notag\\
        =&\sum_{k=0}^{N-1}\left[L_k(\vx_k^{p_i*}, u_k^{p_i*})+\mathcal{V}_{k+1}^{p_i}(\vx_{k+1}^{p_i*})-\mathcal{V}_k^{p_i}(\vx_k^{p_i*})\right]\notag\\
        =&\sum_{k=0}^{N-1}\left[\widehat{\mathcal{V}}_k^{p_i}(\vx_k^{p_i*})-\mathcal{V}_k^{p_i}(\vx_k^{p_i*})\right]\notag\\
        \leq&\sum_{k=0}^{N-1}\left[\widehat{\mathcal{V}}_k^{p_i}(\vx_k^{p_j*})-\mathcal{V}_k^{p_i}(\vx_k^{p_j*})+4C_k^V(\vx_k^*)\delta\right]\notag\\
        =&4\delta\sum_{k=0}^{N-1}C_k^V(\vx_k^*),
    \end{align}
    where the constant coefficient $4\sum_{k=0}^{N-1}C_k^V(\vx_k^*)$ is independent of $i$, $j$ and $I$. Fix $j$ and $I$, and let $i\to\infty$. Then,
    \begin{equation}
        \forall \delta>0,\,\limsup_{i\to\infty} \varepsilon_\text{gap}(\bx_0^{p_i*},\bu_0^{p_i*}) \leq 4\delta\sum_{k=0}^{N-1}C_k^V(\vx_k^*).
    \end{equation}
    The left-hand side is independent of $\delta$. Hence, we have
    \begin{align}
        0\leq&\liminf_{p\to\infty} \varepsilon_\text{gap}(\bx_0^{p*},\bu_0^{p*})\notag\\
        \leq&\limsup_{i\to\infty} \varepsilon_\text{gap}(\bx_0^{p_i*},\bu_0^{p_i*})=0,
    \end{align}
    which proves the theorem.
\end{proof}

\begin{proof}[Proof of Theorem \ref{thm:KKT_exact_reachable_set}]
    Theorem \ref{thm:KKT_exact_reachable_set} is a direct application of \cite[Theorem 10]{sriperumbudur2009convergence}. Specifically, Assumption \ref{assum:feasibility_smoothness} guarantees the consistent compactness of the feasible set across iterations, noting that the convexified feasible set is a subset of the original compact feasible set. Furthermore, Theorem \ref{thm:adrp_optimal} guarantees the global optimality of each iteration. These conditions collectively satisfy the assumptions for \cite[Theorem 10]{sriperumbudur2009convergence}.
\end{proof}

\begin{proof}[Proof of Theorem \ref{thm:KKT_approximate_reachable_set}]
    The proof is built upon Zangwill's global convergence theorem \cite[Section 4.5]{zangwill1969nonlinear}. Three conditions should be validated: (i) monotonicity of the objective function, (ii) compactness of the solution sequence, and (iii) closedness of the algorithmic mapping. Specifically, Conditions (ii) and (iii) are guaranteed by \cite{sriperumbudur2009convergence}, which remains valid for the conservative problem \eqref{eq:op_discretize_convexity_conservative}.

    For iteration $q\geq1$, denote the feasible sets of the convexified problem with and without constraints \eqref{eq:op_discretize_convexity_conservative_additional_constraints} by $\mathcal{F}_{\text{c}}^{(q)}$ and $\mathcal{F}_{\text{e}}^{(q)}$, respectively. Then, $(\bx_0^{(q)},\bu_0^{(q)}),\,(\bx_0^{(q+1)},\bu_0^{(q+1)})\in\mathcal{F}_{\text{c}}^{(q)}\subset\mathcal{F}_{\text{e}}^{(q)}$ holds. By \eqref{eq:KKT_approximate_reachable_set_condition}, there exists a constant $\delta'>0$ independent of $q$, s.t. $\mathcal{F}_{\text{c}}^{(q)}\cap\mathbb{B}(\bx_0^{(q)},\delta')=\mathcal{F}_{\text{e}}^{(q)}\cap\mathbb{B}(\bx_0^{(q)},\delta')$. Specifically, $\delta$ can be constructed by $\delta'$ for the local exactness condition. Since the objective function \eqref{eq:op_discretize_convexity_conservative_objective} is convex, Condition (i) also holds. Therefore, all conditions of Zangwill's global convergence theorem are satisfied, which guarantees the KKT convergence of the above process.
\end{proof}

\begin{proof}[Proof of Corollary \ref{cor:global_optimal_approximate_reachable_set}]
    Corollary \ref{cor:global_optimal_approximate_reachable_set} is a direct application of Theorem \ref{thm:KKT_approximate_reachable_set}, as the KKT condition is a necessary and sufficient optimality condition for convex optimization problems under certain CQ.
\end{proof}

\ifCLASSOPTIONcaptionsoff
    \newpage
\fi

\bibliographystyle{myIEEEtran}
\bibliography{IEEEabrv,refs/ref}

\end{document}